\UseRawInputEncoding
\pdfoutput=1
\documentclass[11pt]{article}
\usepackage{amssymb,amsmath}
\usepackage{graphics}
\usepackage{float}
\usepackage{booktabs}
\usepackage{subfigure}
\usepackage{color,fullpage}

\usepackage{cite}
\usepackage{bm}
\usepackage[title]{appendix}
\usepackage{algorithm}%
\usepackage{algorithmicx}%
\usepackage{multirow}
\usepackage{pgfplots}
\usepackage{amsthm}
\usepackage{enumerate}
\graphicspath{{figures/}{Figures/}{logo/}}

\newtheorem{theorem}{Theorem}[section]
\newtheorem{lemma}{Lemma}[section]
\newtheorem{definition}{Definition}[section]
\newtheorem{example}{Example}[section]

\newtheorem{remark}{Remark}[section]

\makeatletter

\@addtoreset{equation}{section} 
\makeatother
\begin{document}
	
\title{Adaptive Momentum via Minimal Dual Function for Accelerating Randomized Sparse Kaczmarz}

\author{
	Lu Zhang\thanks{College of Science, National University of Defense Technology,
		Changsha, Hunan 410073, People's Republic of China. Email: \texttt{zhanglu21@nudt.edu.cn}}
\and	Jinchuan Zeng\thanks{College of Science, National University of Defense Technology,
		Changsha, Hunan 410073, People's Republic of China. Email: \texttt{zengjinchuan23@nudt.edu.cn}}
\and	Hongxia Wang\thanks{College of Science, National University of Defense Technology,
		Changsha, Hunan 410073, People's Republic of China. Email: \texttt{wanghongxia@nudt.edu.cn}}
	\and Hui Zhang\thanks{Corresponding author.
		College of Science, National University of Defense Technology,
		Changsha, Hunan 410073, People's Republic of China. Email: \texttt{h.zhang1984@163.com}
	}
}
\maketitle

\begin{abstract}
Recently, the randomized sparse Kaczmarz method has been accelerated by designing heavy ball momentum adaptively via a minimal-error principle. In this paper, we develop a new adaptive momentum method based on the minimal dual function principle to go beyond the exact measurement restriction of the minimal-error principle. Moreover, by integrating the new adaptive momentum method with the quantile-based sampling, we introduce a general algorithmic framework, called quantile-based randomized sparse Kaczmarz with minimal dual function momentum, which provides a unified approach to exact, noisy, or corrupted linear systems. In addition, we utilize the discrepancy principle and monotone error  as stopping rules for the proposed algorithm. Theoretically, we establish linear convergence in expectation of Bregman distance up to a finite horizon related to the contaminated level. At last, we provide numerical illustrations on simulated and real-world data to demonstrate the effectiveness of our proposed method.
\end{abstract}

\textbf{Keywords.} heavy ball momentum, Bregman projection, sparse Kaczmarz method, inverse problems, stopping rules

\textbf{AMS Classification.} 65F10, 65F20, 65K10

\section{Introduction}
Recovering an unknown vector from linear measurements in the inverse problem $Ax=b$ is a foundational task across scientific computing and data science, with prominent applications in computed tomography (CT) reconstruction \cite{hansen2021computed,elfving2014semi} and machine learning \cite{bottou2018optimization,hastie2009elements}.
The prevalence of linear systems underscores the necessity for developing efficient, robust, and scalable algorithms. In this paper, 
we focus on the regularized Basis Pursuit problem
\begin{equation}
\label{eq1.2}
\min_{x\in\mathbb{R}^n} f(x):=\lambda\|x\|_1+\frac{1}{2}\|x\|^2,~s.t.~Ax=b,
\end{equation}
see e.g.\cite{cai2009convergence,yin2010analysis,lorenz2014linearized}, where $A\in\mathbb{R}^{m\times n}$ is the measurement matrix and $b\in\mathbb{R}^m$ is the measurement vector. 

The remarkable advantage of \eqref{eq1.2} over the original linear systems $Ax=b$ lies in the solution uniqueness due to the strong convexity of the objective function $f$. There are many effective methods for solving \eqref{eq1.2}. In particular, the randomized Kaczmarz (RK) method in \cite{strohmer2009randomized,zouzias2013rek,steinerberger2021sv} for $\lambda=0$ and randomized sparse Kaczmarz (RaSK) method in \cite{lorenz2014linearized,schopfer2019linear,yuan2022adaptively} for $\lambda>0$ have demonstrated their significant potential in solving (\ref{eq1.2}) by offering low computational cost and strong theoretical guarantees.
Very recently, the authors of \cite{lorenz2023minimal} proposed the Bregman-Kaczmarz with exact minimal error momentum (BK-EM) method as an accelerated variant of RaSK. Let $a_{i_k}^T$ denote the $i_k$-th row of the matrix $A$, assumed $\|a_{i_k}\|=1$ by rescaling, and let $b_{i_k}$ be the $i_k$-th element of $b$. The BK-EM method is given by
\begin{align}
\label{eq1:3}
\begin{aligned}
x_{k+1}^*=x_{k}^*-&\alpha_k(a_{i_k}^Tx_k-b_{i_k})a_{i_k}+w_k(x_k^*-x_{k-1}^*),\\
&x_{k+1}=\nabla f^*(x_{k+1}^*).
\end{aligned}
\end{align}
Here $\alpha_k$ is the relaxed parameter, $w_k$ is the momentum parameter, and $i_k$ is the index of the randomly sampled hyperplane. 
Since the efficiency of BK-EM crucially depends on parameters $\alpha_k$ and $w_k$, how to select $\alpha_k$ and $w_k$ becomes a critical issue.
Inspired by the exact-step randomized sparse Kaczmarz method in \cite{lorenz2014linearized} and the minimum error approach in \cite{zeng2024adaptiveh}, 
the authors of \cite{lorenz2023minimal} select $\alpha_k$ and $w_k$ with the least Bregman distance between $x_{k+1}$ and the unique optimal solution $\hat{x}$ with respect to $f$ and $x_{k+1}^*\in\partial f(x_{k+1})$, i.e.,
\begin{equation}
\label{eq1.4}
(\alpha_k,w_k):
=\arg\min_{(\alpha,w)} D_f^{x_{k+1}^*}(x_{k+1},\hat{x})=f^*(x_{k+1}^*)-\langle x_{k+1}^*,\hat{x}\rangle +f(\hat{x}).
\end{equation} 
However, in order to deal with the unknown $\hat{x}$, the minimal-error principle (1.3) needs to assume that the linear system is consistent and the measurement vector $b$ is exact. This requirement excludes many applications in practice.  
In order to go beyond this limit, we assume that the difference between the measurement vector $\tilde{b}$ and the exact vector $b$ can be bounded, that is, 
\begin{equation}
\label{eq1}
\|\tilde{b}-b\|_2\leq\delta,
\end{equation}
where $\delta>0$ is the contaminated level. The measurement vector $\tilde{b}$ can be written as the sum of the exact vector $b$ and a contaminated term $b^*$. The perturbation $b^{*}$ either represents small-scale noise $r$, which affects all equations but does not appreciably destroy convergence, or large-scale corruption $b^c$, which occurs in a few equations and can severely disrupt convergence.
 To handle this issue, recent quantile-based modifications of Kaczmarz method in \cite{haddock2022quantile,cheng2022block,steinerberger2023quantile,jarman2021quantilerk,zhang2024quantile,coria2024quantile,cai2025subsample,haddock2023subsampled} have been designed based on the observation that a large absolute value of residual implies a high probability of being corrupted for an equation.
As a natural development, we wonder whether the algorithmic scheme (\ref{eq1:3}) equipped with quantile statistics still works for solving corrupted systems. The main difficulty behind is how to design the parameter $\alpha_k$ and $w_k$ adaptively based on the measurement data $\tilde{b}$.

Instead of minimizing the Bregman distance between $x_{k+1}$ and $\hat{x}$ in \eqref{eq1.4}, we seek for $\alpha=\alpha_k$ and $w=w_k$ by minimizing a \textsl{perturbed} dual objective function, i.e., 
\begin{equation}
\label{eq1.7}
(\alpha_k,w_k):
=\arg\min_{(\alpha,w)} \tilde{\Psi}(y_{k+1})=f^*(A^Ty_{k+1})-\langle y_{k+1},\tilde{b}\rangle,
\end{equation}
where the dual variable $y_{k+1}$ is updated by 
$$y_{k+1}=y_k-\alpha e_{i_k}\nabla_{i_k}\tilde{\Psi}(y_k)+w(y_k-y_{k-1})$$ 
with $e_{i_k}$ being the $i_k$-th standard basis vector and $\nabla_{i_k}\tilde{\Psi}(y_k)$ the $i_k$-th partial derivative. 
It can be transferred to the primal space by 
$$x_{k+1}=\nabla f^*(A^Ty_{k+1}).$$
Interestingly, when the measurement vector is exact, the \textsl{perturbed} dual objective function reduces to the original dual objective function, which is equivalent to minimizing the Bregman distance between the iterates and the primal solution. Therefore, there is no difference between \eqref{eq1.4} and \eqref{eq1.7}. Consequently, our proposed principle \eqref{eq1.7} is more general and has the ability to deal with contaminated systems. 

The main contributions of this paper can be summarized as follows.
\begin{enumerate}[(a)]
	\item We propose a new principle to adaptively select momentum parameters based on minimizing a \textsl{perturbed} dual problem (see Section \ref{sec3.1}). The new principle overcomes the exact measurement limitation of the minimal-error principle proposed in \cite{lorenz2023minimal,zeng2024adaptiveh}.
	\item We introduce a general algorithmic framework, called Quantile-RaSK-MM, which utilizes the minimal dual function principle and the quantile-based sampling. The new framework is applicable to solving exact, noisy, or even corrupted linear systems (see Section \ref{sec3.2.1}). 
	\item We incorporate the discrepancy principle (DP) and monotone error (ME) in \cite{hansen2012air,hansen2018air} as stopping rules in Algorithm \ref{al4} to prevent semi-convergence phenomenon (see Section \ref{sec3.3}). For the ME rule, we generalize the Euclidean error measure to the Bregman distance. To our knowledge, this is the first study of stopping rules for quantile-based Kaczmarz methods. Numerical experiments show that both DP and ME are effective for noisy systems, while ME remains effective for corrupted systems (see Section \ref{sec5.3.3} and Section \ref{sec5.5}).
	\item 
	Theoretically, we show that Quantile-RaSK-MM converges linearly in expectation of Bregman distance to a finite horizon associated with the contaminated level (see Theorem \ref{th4.3}). Extensive numerical experiments validate that Quantile-RaSK-MM achieves better performance in terms of convergence speed and accuracy compared to existing methods. As a byproduct, RaSK-MM also shows good numerical performance and is comparable to BK-EM in \cite{lorenz2023minimal}.
\end{enumerate}

\subsection{The related work}
There are numerous extensions and developments of the randomized Kaczmarz and quantile-based randomized Kaczmarz methods; we review the most relevant results in this paper.

\subsubsection{Randomized Kaczmarz method and its variants}

The RK method \cite{karczmarz1937angenaherte,strohmer2009randomized,needell2010randomized} is a popular projection-based iterative method for finding the minimum $l_2$-norm solutions of full-rank and overdetermined linear systems. 
The RaSK method, a sparse variant of RK, approximates the sparse solution of linear systems without any limitations on matrices \cite{lorenz2014linearized,schopfer2019linear}. The authors of \cite{petra2015randomized} connect RaSK with the randomized coordinate descent method via duality. 
Accelerations of RK and RaSK include various sampling rules \cite{zhang2022weighted,yuan2022sparse}, averaging variants \cite{necoara2019faster,tondji2023faster} and block variants \cite{petra2015randomized,tondji2024adaptive}.  
In recent years, Polyak's heavy ball momentum \cite{polyak1964some,ghadimi2015global}, known as an excellent tool to speed up optimization algorithms, has been applied to accelerate the RK method in \cite{zeng2024adaptiveh} and the randomized extended Kaczmarz (REK) method in \cite{zeng2024adaptive}. The authors of \cite{lorenz2023minimal} propose a randomized Bregman-Kaczmarz method with minimal error momentum (BK-EM) method, and design the minimal-error principle measured by Bregman distance to adaptively choose the momentum parameter for solving exact linear systems. 
Very recently, the algorithm in \cite{jin2024convergence} also reads as (\ref{eq1:3}) and designs parameters specifically for noisy and infinite-dimensional linear inverse problems, without leveraging iterative information. 
In this work, we propose a new principle to select parameters via the least \textsl{perturbed} dual function, which utilizes iterative information and ensures computability of momentum parameters for contaminated systems.
For more papers on accelerations of iterative algorithms for solving inverse problems, we refer readers to
\cite{loizou2020momentum,jin2024adaptive}.

\subsubsection{Quantile-based RK method and its variants}

When solving corrupted linear systems, sampling a severely corrupted hyperplane may drive the iteration away from the true solution of the corresponding exact system. 
To tackle this issue, the paper \cite{haddock2022quantile} develops a variant of RK, called the quantile-based randomized Kaczmarz (Quantile-RK) method, which employs the statistics quantile of the residual vector in each iteration to detect and avoid projecting onto corrupted rows. It shows that for certain overdetermined random matrices, the Quantile-RK method converges to the solution of the uncorrupted systems with a high likelihood.
The paper \cite{steinerberger2023quantile} generalizes the theory of Quantile-RK in \cite{haddock2022quantile} to any measurement matrices.
Other works developing the Quantile-RK method include
\cite{jarman2021quantilerk,cheng2022block,coria2024quantile,battaglia2024reverse,zhang2024quantile,cai2025subsample,haddock2023subsampled}; in particular, \cite{zhang2024quantile} proposes a sparse variant, Quantile-RaSK, designed to recover the sparse solutions of exact linear systems even in the presence of large corruptions.
By combining Quantile-RaSK and heavy ball momentum, we are going to accelerate Quantile-RaSK and establish corresponding theoretical results.

\subsubsection{The stopping rules}

When applying iterative regularization methods to noisy linear systems with a known noisy level, \cite{hamarik2001monotone} uses the monotone error (ME) as a stopping rule to enforce the monotone decrease of the error. Subsequently, \cite{elfving2007stopping} employs both ME and the discrepancy principle (DP) as stopping criteria for Landweber-type iterations in linear inverse problems, unifies the two rules within a single framework, and provides a training procedure for tuning the discrepancy parameters.
The papers \cite{hansen2012air,hansen2018air} employ AIRTools to set up a parallel-beam tomography test problem, and compute reconstructions by applying the randomized Kaczmarz method, terminated by the discrepancy principle and monotone error rule.
By incorporating the spirit of the DP rule, the authors of \cite{jin2023stochastic,jin2024convergence} propose a step-size selection rule that efficiently suppresses the oscillations in iterates and mitigates the effect of semi-convergence.
Moreover, \cite{jin2024adaptive} analyzes the property of the Landweber-type method with heavy ball momentum, terminated by the discrepancy principle.

\subsection{Organization}

The rest of this paper is organized as follows. Section \ref{sec2} reviews the preliminaries of convex analysis and deduces the \textsl{perturbed} dual problem. In Section \ref{sec3}, we introduce the detailed principle of selecting parameters and stopping criteria, and we provide the formal pseudocode. Section \ref{sec4} presents a theoretical result of Quantile-RaSK-MM showing linear convergence down to a corrupted horizon.
In Section \ref{sec5}, we empirically demonstrate the effectiveness of our proposed method through experiments on simulated and real-world data. 
Finally, Section \ref{sec6} is our conclusions and discussions for future work.

\section{Notation and Preliminaries}
\label{sec2}
\subsection{Notation}
For the matrix $A\in\mathbb{R}^{m\times n}$, we use $\mathcal{R}(A)$, $a_i^T$,  $\|A\|_F$, and $\sigma_{\min}(A)$ to denote the range space, the $i$-th row, 
the Frobenius norm, and the smallest nonzero singular
value of the matrix $A$, respectively.  
Let $\beta$ be the fraction of corruptions and $q$ be the fraction of acceptable equations, and use $[\cdot]$ for the integral function; for simplicity, we assume that $\beta m$ and $qm$ are integers instead of using $[\beta m]$ and $[qm]$. Let $A_{I,J}$ be the submatrix consisting of the rows of $A$ indexed by $I$ and the columns of $A$ indexed by $J$. Define
\begin{align}
\label{2:1}
&\tilde{\sigma}_{q-\beta,\min}(A):=\min\{
\sigma_{\min}(A_{I,J})
\mid |I|=(q-\beta)m,I\subset
\{1,\dots,m\},J\subset
\{1,\dots,n\},\sigma_{\min}(A_{I,J})>0
\}.
\end{align}
For a sequence $z_i,1\leq i\leq n$, which is sorted from small to large as $z_{(i)},1\leq i\leq n$,
the $q\textrm{-quantile}$ of $z_i,1\leq i\leq n$ is defined by
\begin{eqnarray}		
z_q:= \left \{		
\begin{array}{lr}
z_{([nq]+1)},& \textrm{if $nq$~is~not~an integer},\\			
(z_{(nq)}+z_{(nq)+1})/2,& \textrm{otherwise}.		
\end{array}		
\right.		
\end{eqnarray}
For an arbitrary set $C$,
$|C|$ represents the number of elements in set $C$.
If without a declaration, then norm $\|\cdot\|$ refers to the Euclidean norm.

The linear space generated by $S\subset \mathbb{R}^n$ is defined by $\langle S\rangle$.
The orthogonal projection onto a linear space $V\subset \mathbb{R}^n$ is written as $P_V:\mathbb{R}^n\rightarrow V$. When $V=\langle \{x\}\rangle,\forall x\in\mathbb{R}^n$, we abbreviate $P_V$ to the notation $P_x$ and we have
$$P_{V}(y):=P_x(y)=\frac{\langle x ,y\rangle}{\|x\|^2}x,~\forall y\in\mathbb{R}^n.$$
In each $k$-th iterate with $k\in\mathbb{N}$, the expectation conditional to the values of the indices $i_0,\cdots,i_{k-1}$ is denoted by $\mathbb{E}_k$.
We partition the identity matrix as $I_m=(e_1,\ldots,e_m)\in\mathbb{R}^{m\times m}$ and the vector $y\in\mathbb{R}^m$ as $y_{i}=e_i^Ty,~ i=1,\cdots,m$.

\subsection{Convex analysis tools}
We recall some concepts and properties of convex functions \cite{2017Convex}, which are instrumental for our convergence analysis.
Let $f:\mathbb{R}^n\rightarrow \mathbb{R}$ be a proper convex function; its effective domain is denoted by 
$$\text{dom}(f):=\{x\in\mathbb{R}^n\mid f(x)<\infty\}.$$
The subdifferential of $f$ at $x\in \mathbb{R}^n$ is denoted by
\begin{eqnarray}
\partial f(x):=\{x^*\in \mathbb{R}^n\mid f(y)\geq f(x)+\langle x^*,y-x\rangle, \forall y\in \mathbb{R}^n\}.\nonumber
\end{eqnarray}
If the convex function $f$ is assumed to be differentiable, then $\partial f(x)=\{\nabla f(x)\}.$
We say that a proper, lower semi-continuous (l.s.c.) and convex function $f:\mathbb{R}^n\rightarrow \mathbb{R}$ is $\mu$-strongly convex if there exits $\mu>0$ such that for all $ x,y\in \mathbb{R}^n$ and $x^*\in \partial f(x)$, it holds that
\begin{eqnarray}
f(y)\geq f(x)+\langle x^*,y-x\rangle+\frac{\mu}{2}\|y-x\|^2.\nonumber
\end{eqnarray}
Then its conjugate function $f^*:\mathbb{R}^n\rightarrow \mathbb{R}$ with
$$f^*(x^*):=\sup_{x\in \mathbb{R}^n}\{\langle x^*,x\rangle-f(x)\}$$
is differentiable with a $1/\mu$-Lipschitz-continuous gradient in the sense that
\begin{eqnarray}
f^*(y)\leq f^*(x)+\langle \nabla f^*(x),y-x\rangle+\frac{1}{2\mu}\|y-x\|^2, \forall x,y\in \mathbb{R}^n.\nonumber
\end{eqnarray}
\begin{definition}
	Let $f:\mathbb{R}^n\rightarrow \mathbb{R}$ be a strongly convex function.
	The Bregman distance between $x,y \in \mathbb{R}^n$ with respect to $f$ and a subgradient $x^*\in \partial f(x)$ is defined as
	\begin{eqnarray}
	D_f^{x^*}(x,y):=f(y)-f(x)-\langle x^*,y-x\rangle=f^*(x^*)-\langle x^*,y\rangle+f(y).
	\nonumber
	\end{eqnarray}
\end{definition}

\begin{example}
		The function $f(x)=\lambda \|x\|_1+\frac{1}{2}\|x\|^2$ is 1-strongly convex with subdifferential given by
		$\partial f(x)=\{x+\lambda\cdot s\mid s_i=\textrm{sign}(x_i),x_i\neq 0\ and\ s_i\in[-1,1],x_i=0\}$ and conjugate function given by $f^*(x^*)=\frac{1}{2}\|\mathcal{S}_{\lambda}(x^*)\|^2$ with the soft shrinkage function $\mathcal{S}_{\lambda}(x)=\max \{\lvert x\rvert-\lambda,0 \}\cdot\text{sign}(x)$. For any $x^*=x+\lambda\cdot s\in\partial f(x)$, we have
		$$D_f^{x^*}(x,y)=\frac{1}{2}\|x-y\|^2+\lambda\cdot (\|y\|_1-\langle s,y\rangle). $$
		When $\lambda=0$, we have $D_f^{x^*}(x,y)=\frac{1}{2}\|x-y\|^2$.	
\end{example}

\subsection{The \textsl{perturbed} dual problem}
\label{sec2.3}
To connect the algorithmic scheme (\ref{eq1:3}) and the randomized coordinate descent method, we first derive the dual problem of (\ref{eq1.2}) with $b$ replaced by $\tilde{b}$, which is the backbone of our proposed principle; and then we introduce the property of the dual problem.
\begin{lemma}
	Consider the problem (\ref{eq1.2}) with $b$ replaced by $\tilde{b}$, where the contaminated right-hand vector $\tilde{b}=b+b^*$ satisfies $\|\tilde{b}-b\|\leq \delta$. If the contaminated level $\delta$ is unknown, the \textsl{perturbed} dual problem of (\ref{eq1.2}) is given by
\begin{equation}
\label{eq2.1}
\min_{y\in\mathbb{R}^m} \tilde{\Psi}(y):=f^*(A^Ty)-\langle y,\tilde{b}\rangle.
\end{equation}
\end{lemma}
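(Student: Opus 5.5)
The plan is to derive the dual by standard Lagrangian (Fenchel--Rockafellar) duality applied to the primal problem
\[
\min_{x} f(x)\quad \text{s.t. } Ax=\tilde b ,
\]
with $f(x)=\lambda\|x\|_1+\frac12\|x\|^2$. Because $\delta$ is unknown, we cannot encode the inexactness as a constraint $\|Ax-\tilde b\|\le\delta$. The only data we can use is $\tilde b$, so we impose the equality constraint with the measured right-hand side. The resulting dual is what the lemma calls \emph{perturbed}: it is the exact dual of the perturbed primal.

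\textbf{Step 1: Lagrangian.} Introduce a multiplier $y\in\mathbb{R}^m$ and form
\[
L(x,y)=f(x)-\langle y,Ax-\tilde b\rangle .
\]
The primal value equals $\inf_x\sup_y L(x,y)$, since the supremum over $y$ is $+\infty$ unless $Ax=\tilde b$.

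\textbf{Step 2: dual function.} Compute
\[
\inf_x L(x,y)=\langle y,\tilde b\rangle-\sup_x\{\langle A^Ty,x\rangle-f(x)\}=\langle y,\tilde b\rangle-f^*(A^Ty).
\]
This uses only the definition of the conjugate recalled in Section 2.2. Maximizing this over $y$ is the same as minimizing its negative, which gives
\[
\tilde\Psi(y)=f^*(A^Ty)-\langle y,\tilde b\rangle,
\]
that is, \eqref{eq2.1}. By the Example, $f^*(x^*)=\frac12\|\mathcal S_\lambda(x^*)\|^2$ is finite and differentiable everywhere, so $\tilde\Psi$ is a well-defined smooth convex function on all of $\mathbb{R}^m$.

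\textbf{Step 3: strong duality and the primal--dual link.} I would also record that strong duality holds: $f$ is finite and convex on all of $\mathbb{R}^n$ and the constraints are affine, so a Slater-type condition holds whenever $\tilde b\in\mathcal R(A)$. Under this condition the optimality relations give
\[
x=\nabla f^*(A^Ty).
\]
This justifies the primal update $x_{k+1}=\nabla f^*(A^Ty_{k+1})$ used later.

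\textbf{Main obstacle.} The point needing care is the inconsistent case $\tilde b\notin\mathcal R(A)$, which is typical under noise or corruption. There the primal is infeasible and $\tilde\Psi$ may be unbounded below. I would handle this by stating the lemma as a formal derivation. The object \eqref{eq2.1} is well defined regardless, and the algorithm only uses its coordinate gradients $\nabla_{i}\tilde\Psi(y)=a_i^T\nabla f^*(A^Ty)-\tilde b_i$, not its minimizer. When $\tilde b=b$, the construction reduces to the usual dual, consistent with the remarks in the introduction.
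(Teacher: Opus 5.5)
Your derivation is correct, but it dualizes a different primal from the one the paper uses.

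The paper keeps the noise model in the constraint. It writes the problem as $\min f(x)$ subject to $Ax=\tilde b-b^*$, $b^*\in\Omega=\{z:\|z\|\le\delta\}$, and adds the indicator $I_\Omega(b^*)$ to the Lagrangian. The extra infimum over $b^*$ produces the support-function term $-\delta\|y\|$, so the actual dual is $\min_y f^*(A^Ty)-\langle y,\tilde b\rangle+\delta\|y\|$. The perturbed dual is then defined by simply dropping $\delta\|y\|$, because $\delta$ is unknown.

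Your computation is the $\delta=0$ case of the same calculation; the key step $\inf_x\{f(x)-\langle A^Ty,x\rangle\}=-f^*(A^Ty)$ is identical. So you obtain $\tilde\Psi$ as an exact Lagrangian dual of the literal problem with $\tilde b$. That gives you strong duality and the primal recovery $x=\nabla f^*(A^Ty)$ when $\tilde b\in\mathcal R(A)$, which the paper does not discuss.

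The paper's route instead shows what is lost by not knowing $\delta$. Its primal is always feasible, since $\|A\hat x-\tilde b\|\le\delta$. Moreover, from $f^*(A^Ty)\ge\langle y,b\rangle-f(\hat x)$ one gets
\[ f^*(A^Ty)-\langle y,\tilde b\rangle+\delta\|y\|\ge\delta\|y\|-\langle y,b^*\rangle-f(\hat x)\ge-f(\hat x). \]
So the discarded term $\delta\|y\|$ is exactly what rules out the unboundedness you flag as the main obstacle.

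You can also state that obstacle more sharply. If $\tilde b\notin\mathcal R(A)$, take $z$ to be the nonzero component of $\tilde b$ orthogonal to $\mathcal R(A)$. Then $A^Tz=0$ and $\langle z,\tilde b\rangle=\|z\|^2>0$. Since $f^*(0)=0$, we get $\tilde\Psi(tz)=-t\langle z,\tilde b\rangle\to-\infty$. Hence $\tilde\Psi$ is always unbounded below in the inconsistent case, not merely possibly so.
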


\begin{proof}
		The primal problem (\ref{eq1.2}) with $b$ replaced by $\tilde{b}$ satisfying $\|\tilde{b}-b\|\leq \delta$ can be reformulated as
	$$
	\min_{x\in\mathbb{R}^n} f(x),~s.t.~Ax=\tilde{b}-b^*,b^*\in\Omega:=\{z\mid \|z\|\leq \delta\}.
	$$
	The associated Lagrangian function has the form of
	$$
	L(x,b^*,y)=f(x)+I_{\Omega}(b^*)-\langle y,Ax+b^*-\tilde{b}\rangle,x\in\mathbb{R}^n,b^*\in\mathbb{R}^m,y\in\mathbb{R}^m,
	$$
	where $I_{\Omega}(b^*)$ is called the indicator function of the set $\Omega$ and given by
	$$
	\begin{aligned}
	I_{\Omega}(b^*):=
	\left\{
	\begin{aligned}
	&0,& b^*\in \Omega,\\
	&+\infty,& b^*\notin \Omega.
	\end{aligned}
	\right.
	\end{aligned}
	$$

	The Lagrangian function induces the dual function
	$$
	\begin{aligned}
	\inf_{x\in\mathbb{R}^n}\inf_{b^*\in\mathbb{R}^m}L(x,b^*,y)
	&=\inf_{x\in\mathbb{R}^n}\inf_{b^*\in\mathbb{R}^m} \{f(x)+I_{\Omega}(b^*)-\langle y,Ax+b^*-\tilde{b}\rangle\}\\
	&=\inf_{x\in\mathbb{R}^n}
	\left\{
	f(x)-\langle y,Ax-\tilde{b}\rangle+\inf_{b^*\in\mathbb{R}^m}\{ I_{\Omega}(b^*)-\langle y,b^*\rangle \}
	\right\} \\
	&=\inf_{x\in\mathbb{R}^n}\left\{
	f(x)-\langle y,Ax-\tilde{b}\rangle-\sup_{b^*\in\Omega}\{ \langle y,b^*\rangle \}
	\right\}\\
	&=\inf_{x\in\mathbb{R}^n}\left\{
	f(x)-\langle y,Ax-\tilde{b}\rangle-\delta\|y\|
	\right\}\\
	&=-\sup_{x\in \mathbb{R}^n} \left\{\langle A^Ty,x\rangle -f(x)  \right\}+\langle y,\tilde{b}\rangle-\delta\|y\|\\
	&=-f^*(A^Ty)+\langle y,\tilde{b}\rangle-\delta\|y\|.
	\end{aligned}
	$$
	Thus, the corresponding dual problem is
	$$
	\min_{y\in\mathbb{R}^m} f^*(A^Ty)-\langle y,\tilde{b}\rangle+\delta\|y\|.
	$$
	When the contaminated level $\delta$ is unknown, we employ the following \textsl{perturbed} dual problem
$$
	\min_{y\in\mathbb{R}^m} \tilde{\Psi}(y):=f^*(A^Ty)-\langle y,\tilde{b}\rangle.
$$
	
	\end{proof}

According to the 1-strong convexity of $f$, the gradient of $\tilde{\Psi}$ with the following expression
$$
\nabla\tilde{\Psi}(y)=A\nabla f^{*}(A^Ty)-\tilde{b}
$$
is Lipschitz continuous with constant $L=\|A\|_2^2$ 
and coordinate-wise Lipschitz continuous with constant $L_i=\|a_i\|_2^2$ in \cite{tondji2024acceleration}.
An important result in \cite{nesterov2012efficiency} implies the following lemma.
\begin{lemma}[Descent Lemma]
	Suppose that $\tilde{\Psi}(y)$ is continuously differentiable function over $\mathbb{R}^m$ and coordinate-wise Lipschitz continuous with constant $L_i=\|a_i\|_2^2,~i=1,\cdots,m$. Then for any $t\in\mathbb{R}$ and $y\in\mathbb{R}^m$ we have
	$$
\tilde{\Psi}(y+te_i)
\leq 
\tilde{\Psi}(y)+ \nabla_i \tilde{\Psi} (y)\cdot t +\frac{L_i}{2}\cdot t^2,~i=1,\cdots,m.
$$
\end{lemma}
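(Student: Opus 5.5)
The plan is to reduce the claim to the one-dimensional fundamental theorem of calculus along the $i$-th coordinate direction. We first make precise what coordinate-wise Lipschitz continuity gives us. Write $\nabla\tilde{\Psi}(y)=A\nabla f^*(A^Ty)-\tilde{b}$, so that $\nabla_i\tilde{\Psi}(y)=a_i^T\nabla f^*(A^Ty)-\tilde b_i$. Since $f$ is $1$-strongly convex, $\nabla f^*$ is $1$-Lipschitz. Hence for any $s\in\mathbb{R}$,
$$|\nabla_i\tilde{\Psi}(y+se_i)-\nabla_i\tilde{\Psi}(y)|=|a_i^T(\nabla f^*(A^Ty+sa_i)-\nabla f^*(A^Ty))|\le \|a_i\|\cdot\|s a_i\|=L_i|s|.$$
This is the coordinate-wise Lipschitz property with $L_i=\|a_i\|_2^2$. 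We could take it directly from the hypothesis, but the computation above confirms the constant.

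Next we fix $y$, $i$ and $t$, and set $\varphi(s):=\tilde{\Psi}(y+se_i)$ for $s\in\mathbb{R}$. The function $\varphi$ is continuously differentiable with $\varphi'(s)=\nabla_i\tilde{\Psi}(y+se_i)$. By the fundamental theorem of calculus,
$$\tilde{\Psi}(y+te_i)-\tilde{\Psi}(y)-\nabla_i\tilde{\Psi}(y)\,t=\int_0^1\big(\nabla_i\tilde{\Psi}(y+\tau te_i)-\nabla_i\tilde{\Psi}(y)\big)t\,d\tau .$$
We bound the integrand in absolute value by $L_i\tau|t|\cdot|t|=L_i\tau t^2$, and integrating gives $\int_0^1 L_i\tau t^2\,d\tau=\frac{L_i}{2}t^2$. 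This yields the stated inequality for every $i=1,\dots,m$ and every $t\in\mathbb{R}$; negative $t$ needs no separate treatment because we work with absolute values.

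There is no real obstacle. The only point needing care is that the Lipschitz bound is applied to the $i$-th partial derivative along the $i$-th coordinate line only, not to the full gradient. This is exactly why the smaller constant $L_i$ appears instead of $L=\|A\|_2^2$. Alternatively, one can simply cite the standard result of \cite{nesterov2012efficiency}, of which this is a direct specialization.
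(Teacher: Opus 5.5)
Your proof is correct and matches the paper's approach: the paper gives no argument of its own and simply invokes the coordinate-wise descent inequality from \cite{nesterov2012efficiency}, and the standard proof of that result is your fundamental-theorem-of-calculus computation along the line $s\mapsto y+se_i$. Your additional check that $|\nabla_i\tilde{\Psi}(y+se_i)-\nabla_i\tilde{\Psi}(y)|\le \|a_i\|_2^2|s|$, via the $1$-Lipschitz continuity of $\nabla f^*$, makes the constant $L_i$ self-contained, whereas the paper takes it from \cite{tondji2024acceleration}.
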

In particular, when the right-hand side $b$ is measured exactly, the dual problem of (\ref{eq1.2}) is
$$
\hat{\Psi}:=\min_{y\in\mathbb{R}^m} \Psi(y)=f^*(A^Ty)-\langle y,b\rangle.
$$
The following lemma connects the distance to the optimality of the dual problem and the Bregman distance
to the optimal
primal solution, which can also be referred to as Theorem 3.8 in \cite{jinconvergence}. 
\begin{lemma}[Lemma 3.1, \cite{tondji2024acceleration}]
	\label{lemma3:1}
	Let $b\in\mathcal{R}(A)$ and $(y_k,x_k^*,x_k)$ be three sequences such that $x_k^*=A^Ty_k,~x_k=\nabla f^*(x_k^*)$. Then it holds that
	$$
	D_f^{x_k^*}(x_k,\hat{x})=\Psi(y_k)-\hat{\Psi}.
	$$
\end{lemma}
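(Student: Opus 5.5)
The plan is to identify both sides of the claimed identity through Fenchel--Young equality and strong duality for the exact problem (\ref{eq1.2}).

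\emph{Step 1: $\hat{\Psi}=-f(\hat{x})$.} Since $b\in\mathcal{R}(A)$, the feasible set is nonempty. Because $f$ is strongly convex, (\ref{eq1.2}) has a unique solution $\hat{x}$. The constraints are linear and $f$ is finite everywhere, so strong duality holds and a dual minimizer $\hat{y}$ exists. For polyhedral constraints with $\mathrm{dom} f=\mathbb{R}^n$, this follows from standard Lagrangian duality via the KKT conditions. The optimality conditions read $A\hat{x}=b$ and $A^T\hat{y}\in\partial f(\hat{x})$, equivalently $\hat{x}=\nabla f^*(A^T\hat{y})$. Fenchel--Young equality then gives
$$f^*(A^T\hat{y})=\langle A^T\hat{y},\hat{x}\rangle-f(\hat{x})=\langle \hat{y},b\rangle-f(\hat{x}).$$
Hence $\hat{\Psi}=\Psi(\hat{y})=f^*(A^T\hat{y})-\langle\hat{y},b\rangle=-f(\hat{x})$. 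This step is the main point to justify carefully. It needs existence of a dual minimizer, or at least that $\hat{\Psi}=-f(\hat{x})$ holds as an infimum. I would cite Rockafellar-type duality, noting that the constraint qualification is automatic for affine constraints with a finite convex $f$.

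\emph{Step 2: rewrite $\Psi(y_k)$.} For arbitrary $y_k$, set $x_k^*=A^Ty_k$. Using $A\hat{x}=b$,
$$\langle y_k,b\rangle=\langle y_k,A\hat{x}\rangle=\langle A^Ty_k,\hat{x}\rangle=\langle x_k^*,\hat{x}\rangle.$$
Therefore $\Psi(y_k)=f^*(x_k^*)-\langle x_k^*,\hat{x}\rangle$.

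\emph{Step 3: combine.} Steps 1 and 2 give
$$\Psi(y_k)-\hat{\Psi}=f^*(x_k^*)-\langle x_k^*,\hat{x}\rangle+f(\hat{x}).$$
By the second expression in the definition of the Bregman distance, the right-hand side equals $D_f^{x_k^*}(x_k,\hat{x})$. This expression is legitimate because $x_k=\nabla f^*(x_k^*)$, and this is equivalent to $x_k^*\in\partial f(x_k)$ for a closed convex $f$. That equivalence is the one remaining point to note.

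The only nontrivial ingredient is therefore the zero-duality-gap identity $\hat{\Psi}=-f(\hat{x})$ in Step 1; the rest is direct substitution.
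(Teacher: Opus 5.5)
Your proof is correct. The paper gives no proof of its own and imports the lemma from Tondji, Necoara and Lorenz (cf.\ also Jin's Theorem~3.8), where the argument is essentially yours: the zero duality gap $\hat{\Psi}=-f(\hat{x})$ (via a multiplier $\hat{y}$ with $A^T\hat{y}\in\partial f(\hat{x})$ and Fenchel--Young equality), the substitution $\langle y_k,b\rangle=\langle A^Ty_k,\hat{x}\rangle$, and the conjugate form $f^*(x_k^*)-\langle x_k^*,\hat{x}\rangle+f(\hat{x})$ of the Bregman distance.
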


\section{The proposed method}

\label{sec3}

In this section, we aim to solve the system $Ax=b$ with access only to the corrupted right-hand side
$$
\tilde{b}=b+b^c,
$$
where $b^c$ is the sparse vector of corruptions and $b=A\hat{x}$ is the exact data. The fraction of being corrupted is $\beta\in (0,1)$, i.e., $\|b^c\|_0=\beta m$, and it is bounded by $\|b^c\|\leq \delta$.
We begin with introducing the quantile-based sampling and the minimal dual function principle. 
Equipped with these two techniques, we give a formal description of Quantile-RaSK-MM designed for solving corrupted linear systems. In particular, Quantile-RaSK-MM can be reduced to solve noisy or exact systems, abbreviated by RaSK-MM. Finally, we extend the DP and ME rule to terminate the Quantile-RaSK-MM method.

\subsection{The technical details}
\label{sec3.1}
Applying the randomized coordinate descent method to the \textsl{perturbed} dual problem (\ref{eq2.1}) and adding heavy ball momentum to the dual update, we consider the following iterative scheme
\begin{equation}
\label{eq4.1}
\begin{aligned}
y_{k+1}=y_k-&\alpha_k e_{i_k}\nabla_{i_k}\tilde{\Psi}(y_k)+w_k(y_k-y_{k-1}),\\
x_{k+1} &= \nabla f^*(A^Ty_{k+1}).
\end{aligned}
\end{equation}
For exact linear systems, (\ref{eq4.1}) and (\ref{eq1:3}) are equivalent.
The performance of (\ref{eq4.1}) extremely depends on the sampling index $i_k$ and the parameters $\alpha_k,w_k$. Therefore, before formally proposing the algorithm, we first introduce the techniques for selecting $i_k$ and $\alpha_k,w_k$.

\subsubsection{The quantile statistics to detect corrupted equations}

Based on the observation that a larger residual indicates a higher probability of being corrupted, the absolute residual $|a_{i}^Tx_k-\tilde{b}_{i}|,i=1,\cdots,m$ is widely used to measure the error of each equation in \cite{haddock2022quantile,cheng2022block,steinerberger2023quantile,jarman2021quantilerk,battaglia2024reverse,zhang2024quantile}. 
In detail, firstly fix the acceptable ratio $q\in (\beta,1-\beta]$. In $k$-th iterate, define the $q$-quantile of absolute residuals indexed by the set $N_1=\{1,\cdots,m\}$ as
\begin{equation}
Q_q(x_k,N_1):={q\textrm{-quantile}}(\lvert\langle a_i,x\rangle-\tilde{b}_i\rvert\mid i\in N_1).\nonumber
\end{equation}
A hyperplane is regarded as acceptable if its absolute residual is less than the computed $q$-quantile. Then the acceptable set is given by
$$N_2:=\{i\in N_1\mid \lvert\langle x_k,a_{i}\rangle -\tilde{b}_{i}\rvert\leq Q_q(x_k,N_1)\}.$$
There are $|N_2|=qm$ acceptable hyperplanes. Then we sample an index $i_k$ uniformly from the acceptable set $N_2$ and use that for a step of the method (\ref{eq4.1}).    

\subsubsection{The new principle for adaptively updating parameters $\alpha_k,w_k$}



Recall that the authors of \cite{lorenz2023minimal} select parameters $\alpha_k$ and $w_k$ with minimal error measured in Bregman distance at $k$-th iterate, i.e.,
\begin{equation}
\label{eq3.5}
(\alpha_k,w_k):
=\arg\min_{(\alpha,w)} D_f^{x_{k+1}^*}(x_{k+1},\hat{x}).
\end{equation}
The principle (\ref{eq3.5}) minimizes Bregman distance from the iterate $x_k$ to the true solution $\hat{x}$, and consequently, the expression of parameter $\alpha_k$ and $w_k$ will inevitably associate with the true solution. For exact linear systems, as discussed in \cite{lorenz2023minimal}, the problem of the $\hat{x}$-dependence can be resolved by introducing a new variable and computing it in a recursive manner. However, for contaminated linear systems, it cannot be computed without the exact vector $b$.


For exact linear systems, it follows from Lemma \ref{lemma3:1} that we have 
$$
D_f^{x_{k+1}^*}(x_{k+1},\hat{x})=\Psi(y_{k+1})-\hat{\Psi},
$$
which implies that the principle (\ref{eq3.5}) is equivalent to
\begin{equation}
\label{eq3:3}
(\alpha_k,w_k):
=\arg\min_{(\alpha,w)}\Psi(y_{k+1}).
\end{equation}
Since the principle (\ref{eq3:3}) does not involve any unknown quantity, neither do parameters derived from (\ref{eq3:3}). 
In contaminated case, (\ref{eq3:3}) can be generalized to minimize the \textsl{perturbed} dual function, i.e.,
\begin{equation}
\begin{aligned}
\label{eq3:5}
(\alpha_k,w_k):
&=\arg\min_{(\alpha,w)} \tilde{\Psi}(y_{k+1})=f^*(A^Ty_{k+1})-\langle y_{k+1},\tilde{b}\rangle.
\end{aligned}
\end{equation}
We call this idea the minimal dual function principle, which still can update parameters in the absence of the exact vector $b$. 

\subsection{The proposed method}
\label{sec3.2.1}
Up to now, we have introduced techniques for selecting $i_k$ and $\alpha_k,w_k$; we are ready to derive the mathematical formulation of $\alpha_k$ and $w_k$, which needs to ensure convergence of the update (\ref{eq4.1}) in the corrupted case. Let $v_k^*=y_k-y_{k-1}$.
It follows from the Lipschitz gradient continuity of $\tilde{\Psi}$ with constant $L=\|A\|_2^2$ that we have
\begin{equation}
\label{eq5:5}
\begin{aligned}
\tilde{\Psi}(y_{k+1})
&= \tilde{\Psi}(y_k-\alpha e_{i_k}\nabla_{i_k}\tilde{\Psi}(y_k)+wv_k^*)\\
&\leq \tilde{\Psi}(y_k)+\langle -\alpha e_{i_k}\nabla_{i_k}\tilde{\Psi}(y_k)+wv_k^*, \nabla \tilde{\Psi}(y_k)\rangle +\frac{\|A\|_2^2}{2}\|-\alpha e_{i_k}\nabla_{i_k}\tilde{\Psi}(y_k)+wv_k^*\|^2.
\end{aligned}
\end{equation}
Thanks to $\tilde{\Psi}(y_{k})=\Psi(y_k)+\langle b-\tilde{b},y_k\rangle$, then (\ref{eq5:5}) can be reformulated as
\begin{equation}
\label{eq5:6}
\begin{aligned}
&~~~~
\Psi
(y_{k+1})\\
&\leq \Psi(y_k)-\langle b-\tilde{b},v_{k+1}^*\rangle+\langle -\alpha e_{i_k}\nabla_{i_k}\tilde{\Psi}(y_k)+wv_k^*, \nabla \tilde{\Psi}(y_k)\rangle +\frac{\|A\|_2^2}{2}\|-\alpha e_{i_k}\nabla_{i_k}\tilde{\Psi}(y_k)+wv_k^*\|^2\\
&\leq 
\Psi(y_k)+\frac{1}{4\gamma}\delta^2+\gamma\|y_k-y_{k+1}\|^2+\langle -\alpha e_{i_k}\nabla_{i_k}\tilde{\Psi}(y_k)+wv_k^*, \nabla \tilde{\Psi}(y_k)\rangle
+\frac{\|A\|_2^2}{2}\|-\alpha e_{i_k}\nabla_{i_k}\tilde{\Psi}(y_k)+wv_k^*\|^2\\
&=
\Psi(y_k)+\frac{1}{4\gamma}\delta^2+\langle -\alpha e_{i_k}\nabla_{i_k}\tilde{\Psi}(y_k)+wv_k^*, \nabla \tilde{\Psi}(y_k)\rangle +\left(\frac{\|A\|_2^2}{2}+\gamma\right)\|-\alpha e_{i_k}\nabla_{i_k}\tilde{\Psi}(y_k)+wv_k^*\|^2,
\end{aligned}
\end{equation}
where the second inequality holds with $\gamma>0$.

Taking derivative of (\ref{eq5:6}) with respect to $\alpha$ deduces
\begin{equation*}
\label{eq5:7}
(2\gamma+\|A\|_2^2)\|\nabla_{i_k}\tilde{\Psi}(y_k)\|^2\alpha-(2\gamma+\|A\|_2^2)\langle e_{i_k}\nabla_{i_k}\tilde{\Psi}(y_k),v_k^*\rangle w=\|\nabla_{i_k}\tilde{\Psi}(y_k)\|^2.
\end{equation*}
Taking derivative of (\ref{eq5:6}) with respect to $w$ deduces
\begin{equation}
\label{eq5:8}
-(2\gamma+\|A\|_2^2)\langle e_{i_k}\nabla_{i_k}\tilde{\Psi}(y_k),v_k^*\rangle\alpha+(2\gamma+\|A\|_2^2)\|v_k^*\|^2w=-\langle v_k^*, \nabla \tilde{\Psi}(y_k)\rangle.
\end{equation}
We can obtain the following linear system with respect to $\alpha$ and $w$
$$
\begin{aligned}
\left\{
\begin{aligned}
&(2\gamma+\|A\|_2^2)\|\nabla_{i_k}\tilde{\Psi}(y_k)\|^2\alpha-(2\gamma+\|A\|_2^2)\langle e_{i_k}\nabla_{i_k}\tilde{\Psi}(y_k),v_k^*\rangle w=\|\nabla_{i_k}\tilde{\Psi}(y_k)\|^2,\\
&-(2\gamma+\|A\|_2^2)\langle e_{i_k}\nabla_{i_k}\tilde{\Psi}(y_k),v_k^*\rangle\alpha+(2\gamma+\|A\|_2^2)\|v_k^*\|^2w=-\langle v_k^*, \nabla \tilde{\Psi}(y_k)\rangle.
\end{aligned}
\right.
\end{aligned}
$$
The linear system is invertible if
$$
\|\nabla_{i_k} \tilde{\Psi}(y_k)\|^2 \|v_k^*\|^2-\langle e_{i_k}\nabla_{i_k} \tilde{\Psi}(y_k),v_k^*\rangle^2\neq0,$$
which is equivalent to the condition that the search direction
$e_{i_k}\nabla_{i_k} \tilde{\Psi}(y_k)$ and the momentum correction $v_k^*$ are linearly independent. 
In the case of linear independence, the optimal solution $(\alpha_k,w_k)$ is given by
\begin{align}
\label{eq4.9}
\left\{
\begin{aligned}
&\alpha_k=\frac{\|\nabla_{i_k} \tilde{\Psi}(y_k)\|^2 \|v_k^*\|^2-\langle e_{i_k}\nabla_{i_k} \tilde{\Psi}(y_k),v_k^*\rangle \langle \nabla \tilde{\Psi}(y_k), v_k^*\rangle}{(2\gamma+\|A\|_2^2)(\|\nabla_{i_k} \tilde{\Psi}(y_k)\|^2 \|v_k^*\|^2-\langle e_{i_k}\nabla_{i_k} \tilde{\Psi}(y_k),v_k^*\rangle^2)},\\
&w_k=\frac{-\|\nabla_{i_k} \tilde{\Psi}(y_k)\|^2\langle \nabla \tilde{\Psi}(y_k), v_k^*\rangle+\|\nabla_{i_k} \tilde{\Psi}(y_k)\|^2\langle e_{i_k}\nabla_{i_k} \tilde{\Psi}(y_k),v_k^*\rangle }{(2\gamma+\|A\|_2^2)(\|\nabla_{i_k} \tilde{\Psi}(y_k)\|^2 \|v_k^*\|^2-\langle e_{i_k}\nabla_{i_k} \tilde{\Psi}(y_k),v_k^*\rangle^2)}.
\end{aligned}
\right.
\end{align}
In the case of linear dependence, the direction of gradient descent is the same as the direction of momentum correction. We can set $\alpha_k=1$ and using (\ref{eq5:8}) obtain 
\begin{equation}\label{eq5:3}w_k=\frac{\langle (2\gamma+\|A\|_2^2) e_{i_k}\nabla_{i_k}\tilde{\Psi}(y_k)-\nabla \tilde{\Psi}(y_k), v_k^*\rangle}{(2\gamma+\|A\|_2^2)\|v_k^*\|^2}.
\end{equation}
We conclude the iterates in Algorithm \ref{al4}.

\begin{algorithm}[H]
	\caption{Quantile-RaSK with minimal function momentum (Quantile-RaSK-MM)}\label{al4}
	\begin{algorithmic}[1]
		\State \textbf{Input}: Given $A \in \mathbb{R}^{m \times n}$, $\tilde{b} \in \mathbb{R}^{m}$,  $x_{0}=0\in \mathbb{R}^{n}$, $y_{-1}=y_0=0\in\mathbb{R}^{m}$, and parameters $\gamma,q,N$
		\State \textbf{Ouput}: solution of $\min_{x\in \mathbb{R}^n}f(x)=\lambda \|x\|_1+\frac{1}{2}\|x\|^2~s.t.~Ax=b$
		\State normalize $A$ by row
		\State \textbf{for} $k=0,1,2, \ldots,N$
		\State $\quad$$\quad$compute $N_1=\{|\langle x_k,a_{i}\rangle-\tilde{b}_{i}|\mid i\in \{1,\cdots,m\}\}$
		\State $\quad$$\quad$compute the $q$-quantile of $N_1$:  $Q_k=Q_q(x_k,N_1)$
		\State $\quad$$\quad$consider $N_2=\{i\in \{1,\cdots,m\}\mid |\langle x_k,a_{i}\rangle -\tilde{b}_{i}|\leq Q_k\}$
		\State $\quad$$\quad$sample ${i_k}$ at random with probability $ p_i=\frac{1}{qm}$, $i\in N_2$
		\State $\quad$$\quad$
		$
		v_k^*=y_k-y_{k-1}~
		$
		\State$\quad$$\quad$\textbf{If}
		$
		(a_{i_k}^Tx_k-\tilde{b}_{i_k})^2(\|v_k^*\|^2-\langle e_{i_k},v_k^*\rangle^2)>0$
		\State$\quad$$\quad$$\quad$$\quad$ $\alpha_k=\frac{(a_{i_k}^Tx_k-\tilde{b}_{i_k}) \|v_k^*\|^2-\langle e_{i_k},v_k^*\rangle \langle Ax_k-\tilde{b}, v_k^*\rangle}{(2\gamma+\|A\|_2^2)((a_{i_k}^Tx_k-\tilde{b}_{i_k}) \|v_k^*\|^2-(a_{i_k}^Tx_k-\tilde{b}_{i_k})\langle e_{i_k},v_k^*\rangle^2)}$
		\State$\quad$$\quad$$\quad$$\quad$
		$w_k=\frac{-\langle Ax_k-\tilde{b}, v_k^*\rangle+(a_{i_k}^Tx_k-\tilde{b}_{i_k})\langle e_{i_k},v_k^*\rangle }{(2\gamma+\|A\|_2^2) (\|v_k^*\|^2-\langle e_{i_k},v_k^*\rangle^2)}$
		\State$\quad$$\quad$\textbf{else}
	\State$\quad$$\quad$$\quad$$\quad$
	$\alpha_k=1$
	\State$\quad$$\quad$$\quad$$\quad$\textbf{If}
	$\|v_k^*\|>0$
	\State$\quad$$\quad$$\quad$$\quad$$\quad$$\quad$
	$w_k=\frac{\langle (2\gamma+\|A\|_2^2) e_{i_k}(a_{i_k}^Tx_k-\tilde{b}_{i_k})-(Ax_k-\tilde{b}), v_k^*\rangle}{(2\gamma+\|A\|_2^2)\|v_k^*\|^2}$
	\State$\quad$$\quad$$\quad$$\quad$\textbf{else}
	\State$\quad$$\quad$$\quad$$\quad$$\quad$$\quad$ $w_k=0$
	\State$\quad$$\quad$$\quad$$\quad$\textbf{end}
		\State$\quad$$\quad$\textbf{end}
		\State$\quad$$\quad$
		$y_{k+1}=y_k-\alpha_k e_{i_k}(a_{i_k}^Tx_k-\tilde{b}_{i_k})+w_k v_k^*$
		\State $\quad$$\quad$
		$x_{k+1}^*=A^Ty_{k+1}$	
		\State$\quad$$\quad$
		$x_{k+1}=\mathcal{S}_{\lambda}(x_{k+1}^*)$
		\State $\quad$$\quad$increment $k=k+1$
		\State \textbf{until} a stopping criterion is satisfied
	\end{algorithmic}
\end{algorithm}

The computational costs for variants of the quantile-based sparse Kaczmarz method are shown in Table \ref{table5}.
Although Quantile-RaSK-MM seems to be computationally expensive at first glance, it only involves vector-vector multiplications and matrix-vector multiplications.
In particular, when updating parameters $\alpha_k,w_k$, some quantities can be computed once and reused, e.g.,
\begin{equation}
\label{eq5:2}
s_1 = Ax_k-\tilde{b},~
s_{1,i_k} = a_{i_k}^Tx_k-\tilde{b}_{i_k},~
s_2 = \|v_k^*\|^2,~
s_3 = \langle e_{i_k},v_k^*\rangle,~
s_4 = \langle s_1,v_k^*\rangle,
\end{equation}
where $s_1$ and $s_{1,i_k}$ has been computed in the 5-th step in Algorithm \ref{al4}.
Then if $s_{1,i_k}^2(s_2-s_3^2)>0$ we have
$$
\alpha_k=\frac{s_{1,i_k}s_2-s_3s_4}{(2\gamma+\|A\|_2^2)s_{1,i_k}(s_2-s_3^2)},w_k=\frac{-s_4+s_{1,i_k}s_3}{(2\gamma+\|A\|_2^2)(s_2-s_3^2)}.
$$ 
By precomputing these values, the computational cost can be significantly reduced.
The time complexity of Quantile-RaSK-MM is $\mathcal{O}(mn)$, which is at the same level as the Quantile-RaSK in \cite{zhang2024quantile}. In contrast, the Quantile-ERaSK in \cite{zhang2024quantile} has a higher time complexity since the computation of its exact stepsize requires $\mathcal{O}(n\log (n))$-sorting procedure.

\begin{table}[H]
	\begin{center}
		\begin{minipage}{\textwidth}
			\caption{Summary of the computational costs for variants of quantile-based sparse Kaczmarz method, where $m,n$ are the numbers of rows and columns of the measurement matrix}
			\label{table5}
			\begin{tabular*}{\textwidth}{@{\extracolsep{\fill}}lcc@{\extracolsep{\fill}}}
				\toprule
				Method  &  Computational cost & Rate bound are shown in
				\\	
				\midrule
				Quantile-RaSK  & $2mn+3m+7n$    &  Theorem 2 \cite{zhang2024quantile} 
				\\
				Quantile-ERaSK  & $2mn+3m+19n+\mathcal{ O}(nlog (n))$  &  Theorem 2 \cite{zhang2024quantile}
				\\
			Quantile-RaSK-MM  & $4mn+10m+4n$  &  Section \ref{sect4.1} in this paper
				\\
				\bottomrule
			\end{tabular*}
		\end{minipage}
	\end{center}
\end{table}

When the linear system is uncorrupted, i.e., $\beta=0$, we let $q=1$ and then Quantile-RaSK-MM reduces to RaSK with minimal function momentum (RaSK-MM) method.
In the RaSK-MM method, the index $i_k$ of hyperplane is uniformly sampled from the set of all indices $\{1,\cdots,m\}$ at random; the parameters $\alpha_k,w_k$ are still computed by the mathematical formula (\ref{eq4.9}) or (\ref{eq5:3}).
The Quantile-RaSK-MM method can be viewed as a framework for solving linear systems
$Ax=b$ when only damaged observations $\tilde{b}=b+b^*$ with $\|b^{*}\|\leq \delta$ are available. This framework can be divided into three cases.
	\begin{enumerate}[(a)]
		\item For corrupted system, i.e., $b^*=b^c$ with $\|b^c\|_0=\beta m$, Quantile-RaSK-MM is employed with an appropriate acceptable ratio $q$ and $\gamma>0$. 
		\item For noisy systems, i.e., $b^*=r$, RaSK-MM with $\gamma>0$ is employed to account for the additive noise. 
		\item For exact systems, i.e., $b^*=0$, RaSK-MM with $\gamma=0$ is applied to solve the systems exactly.
	\end{enumerate}

\begin{remark}
\label{remark1}
In the Quantile-RaSK-MM method, the condition imposed on $q$ and $\beta$ is strict in the sense that $q$ must be larger than $\beta$, and we must take $q\leq 1-\beta$ to avoid sampling corrupted rows. We conclude that the acceptable ratio should satisfy $q\in(\beta,1-\beta]$ in Quantile-RaSK-MM. In contrast, Theorem 2 in \cite{zhang2024quantile} indicates that the acceptable ratio should satisfy $q\in(\beta,1-\beta)$ in Quantile-RaSK and Quantile-ERaSK; that is, different from Quantile-RaSK-MM, the parameter $q$ in Quantile-RaSK cannot reach the upper bound $1-\beta$.
\end{remark}

\subsection{The stopping criteria}
\label{sec3.3}

An appropriate stopping criterion plays a crucial role in the practical performance of Algorithm \ref{al4}. The most common choice is to terminate after a prescribed maximum number of iterations; see \cite{lorenz2023minimal,zeng2024adaptive,zeng2024adaptiveh,yuan2022adaptively,tondji2023faster,tondji2024acceleration} and the references therein. However, excessive iteration increases computational cost and may cause oscillations in iterates as presented in \cite{hansen2012air}, while insufficient iterations mean loss of accuracy in $x_k$. These considerations motivate the study of an approximate stopping criterion for Algorithm \ref{al4}.

Firstly, we directly adopt the discrepancy principle in the sense that choosing $k_{DP}$ as the first iteration index $k$ satisfying
\begin{equation}
\label{eq3.1}
\|Ax_k-\tilde{b}\|\leq \tau\delta,
\end{equation}
which has been widely used in \cite{elfving2007stopping,elfving2014semi,hansen2012air,jin2024adaptive}. Here $\delta>0$ denotes the known contaminated level and the parameter $\tau\geq1$ is a tuning parameter that can be estimated via a training process in \cite{elfving2007stopping}. When the measurement vector $\tilde{b}$ is corrupted, the residual error $\|Ax_k-\tilde{b}\|$ becomes excessively large, rendering the DP unreliable. This calls for a stopping rule that remains effective in the presence of corruptions.

Building on the monotone error  rule in \cite{hamarik2001monotone,elfving2007stopping,elfving2014semi,hansen2012air}, we define $k_{ME}$ as the largest index such that the error $D_f^{x_{k}^*}(x_{k},\hat{x})$ for $k\in [1,k_{ME-1}]$ is monotonically decrease in the sense that we have 
\begin{equation}
\label{eq3:1}
D_f^{x_{k+1}^*}(x_{k+1},\hat{x})<D_f^{x_{k}^*}(x_{k},\hat{x}),~k=1,\cdots,k_{ME-1}.
\end{equation}
Since the truth solution $\hat{x}$ is unavailable in practice, we next deduce a sufficient condition to make (\ref{eq3:1}) computable.
Based on Lemma \ref{lemma3:1} and (\ref{eq5:6}), we obtain  
\begin{equation}
\begin{aligned}
\label{eq5:1}
&~~~~
D_f^{x_{k+1}^*}(x_{k+1},\hat{x})\\
&\leq D_f^{x_{k}^*}(x_{k},\hat{x})-\langle b-\tilde{b},v_{k+1}^*\rangle+\langle -\alpha e_{i_k}\nabla_{i_k}\tilde{\Psi}(y_k)+wv_k^*, \nabla \tilde{\Psi}(y_k)\rangle +\frac{\|A\|_2^2}{2}\|-\alpha e_{i_k}\nabla_{i_k}\tilde{\Psi}(y_k)+wv_k^*\|^2\\
&\leq 
D_f^{x_{k}^*}(x_{k},\hat{x})-\langle b-\tilde{b},v_{k+1}^*\rangle+\langle -\alpha_k e_{i_k}\nabla_{i_k}\tilde{\Psi}(y_k)+w_kv_k^*, \nabla \tilde{\Psi}(y_k)\rangle +\frac{\|A\|_2^2}{2}\|-\alpha_k e_{i_k}\nabla_{i_k}\tilde{\Psi}(y_k)+w_kv_k^*\|^2,
\end{aligned}
\end{equation}
where $\alpha_k$ and $w_k$ are given by (\ref{eq4.9}).
We introduce a parameter $\tau_k\in [-1,1]$ via the Cauchy–Schwarz inequality, then with the known contaminated level $\delta$ we have
$$
-\langle b-\tilde{b},v_{k+1}^*\rangle=\tau_k\delta\|v_{k+1}^*\|.
$$
Thus, (\ref{eq5:1}) can be reformulated as
\begin{equation*}
\begin{aligned}
&~~~~
D_f^{x_{k+1}^*}(x_{k+1},\hat{x})\\
&\leq D_f^{x_{k}^*}(x_{k},\hat{x})+\tau_k\delta\|v_{k+1}^*\|+\langle -\alpha_k e_{i_k}\nabla_{i_k}\tilde{\Psi}(y_k)+w_kv_k^*, \nabla \tilde{\Psi}(y_k)\rangle +\frac{\|A\|_2^2}{2}\|-\alpha e_{i_k}\nabla_{i_k}\tilde{\Psi}(y_k)+w_kv_k^*\|^2\\
&=
D_f^{x_{k}^*}(x_{k},\hat{x})+\tau_k\delta\|v_{k+1}^*\|
+\frac{\|A\|_2^2}{2}w_k^2\|v_k^*\|^2
+w_k\langle \nabla \tilde{\Psi}(y_k)-\alpha\|A\|_2^2 e_{i_k}\nabla_{i_k}\tilde{\Psi}(y_k),v_k^*\rangle\\
&~~~~~~~~~~~~~~~~~~~~
-\left(\alpha_k-\frac{\|A\|_2^2}{2}\alpha_k^2\right)\|\nabla_{i_k}\tilde{\Psi}(y_k)\|^2.
\end{aligned}
\end{equation*}
Let $\tau=\max_k |\tau_k|$ and $\tau\in [0,1]$. Hence, we have
\begin{equation*}
 \begin{aligned}
D_f^{x_{k+1}^*}(x_{k+1},\hat{x})
 &\leq
 D_f^{x_{k}^*}(x_{k},\hat{x})+\tau\delta\|v_{k+1}^*\|
 +\frac{\|A\|_2^2}{2}w_k^2\|v_k^*\|^2
 +w_k\langle \nabla \tilde{\Psi}(y_k)-\alpha_k\|A\|_2^2 e_{i_k}\nabla_{i_k}\tilde{\Psi}(y_k),v_k^*\rangle\\
 &~~~~~~~~~~~~~~~~~~~~
 -\left(\alpha_k-\frac{\|A\|_2^2}{2}\alpha_k^2\right)\|\nabla_{i_k}\tilde{\Psi}(y_k)\|^2.
 \end{aligned}
 \end{equation*}
Therefore, the error measured by Bregman distance monotonically decreases provided that
$$
\tau\delta\|v_{k+1}^*\|
+\frac{\|A\|_2^2}{2}w_k^2\|v_k^*\|^2
+w_k\langle \nabla \tilde{\Psi}(y_k)-\alpha_k\|A\|_2^2 e_{i_k}\nabla_{i_k}\tilde{\Psi}(y_k),v_k^*\rangle
-\left(\alpha_k-\frac{\|A\|_2^2}{2}\alpha_k^2\right)\|\nabla_{i_k}\tilde{\Psi}(y_k)\|^2\leq 0.
$$
We then define the stopping index as the first index $k$ satisfying
\begin{equation}
\label{eq5:4}
\tau\delta\|v_{k+1}^*\|
\geq 
-\frac{\|A\|_2^2}{2}w_k^2\|v_k^*\|^2
-w_k\langle \nabla \tilde{\Psi}(y_k)-\alpha_k\|A\|_2^2 e_{i_k}\nabla_{i_k}\tilde{\Psi}(y_k),v_k^*\rangle
+\left(\alpha_k-\frac{\|A\|_2^2}{2}\alpha_k^2\right)\|\nabla_{i_k}\tilde{\Psi}(y_k)\|^2.
\end{equation}
With the simple notations in (\ref{eq5:2}), the stopping rule can be simplified as
\begin{equation*}
\tau\delta\|v_{k+1}^*\|
\geq 
-\frac{\|A\|_2^2}{2}w_k^2s_2-w_k(s_4-\alpha_k \|A\|_2^2 s_{1,i_k}s_3)+\left(\alpha_k-\frac{\|A\|_2^2}{2}\alpha_k^2\right)s_{1,i_k}:=S_{k+1}.
\end{equation*}

With DP (\ref{eq3.1}) and ME (\ref{eq5:4}) in place, we will evaluate their effectiveness through numerical experiments in Section \ref{sec5}.

\begin{remark}
\begin{enumerate}[(a)]
\item Since the vector $v_{k+1}^*$ is required in the next iterate, both DP (\ref{eq3.1}) and ME (\ref{eq5:4}) do not introduce additional computing cost.
\item As discussed in \cite{elfving2007stopping}, the parameter $\tau$ can be determined via a training procedure. In detail, given a test problem with known $\delta$, we separately compute the iteration number $k_{DP}$ and $k_{ME}$ that minimizes the error $D_f^{x_k^*}(x_k,\hat{x})$ and set
$$\tau_{DP}=\frac{\|Ax_{k_{DP}}-\tilde{b}\|+\|Ax_{k_{DP}-1}-\tilde{b}\|}{2\delta},~\tau_{ME}=\frac{S_{k_{ME}}/\|v_{k_{ME}}^*\|+S_{k_{ME}-1}/\|v_{k_{ME}-1}^*\|}{2\delta}.$$
\end{enumerate}
\end{remark}

\section{Convergence analysis}
\label{sec4}

In this section, we establish convergence results for the Quantile-RaSK-MM method, which are stated in a unified framework that includes Quantile-RaSK-MM for corrupted systems, RaSK-MM with $\gamma>0$ for noisy systems, and RaSK-MM with $\gamma=0$ for exact systems.

\subsection{The convergence analysis of Quantile-RaSK-MM}
\label{sect4.1}

Here we investigate the convergence result of Quantile-RaSK-MM. To this end, the Bregman distance between the iterates and the solution $\hat{x}$ can be estimated by the following error bound, which is an important lemma for the convergence analysis in this paper.

\begin{lemma}[Lemma 3.1, \cite{schopfer2019linear}]
	\label{lemma3:2}
	Let $\hat{x}$ be the unique solution of (\ref{eq1.2}).
	Denote $A_{:,J}$ be the submatrix formed by the columns of $A$ indexed by $J$, and define a minimum singular value 
	$$\tilde{\sigma}_{\min }(A):=\min \left\{\sigma_{\min }\left(A_{:,J}\right) \mid J \subset\{1, \ldots, n\}, \sigma_{\min }\left(A_{:,J}\right)>0\right\}.$$ 
	Let $ \textrm{supp}(\hat{x})=\left\{j \in\{1, \ldots, n\} \mid \hat{x}_j \neq 0\right\}$.
	When $b \neq 0$ we also have $\hat{x} \neq 0$, and hence
	$
	|\hat{x}|_{\min }:=\min \left\{\left|\hat{x}_j\right| \mid j \in \textrm{supp}(\hat{x})\right\}>0
	$. 
	Then for all $x\in\mathbb{R}^n$ with $\partial f(x)\cap \mathcal{R}(A^T)\neq \emptyset$ and all $x^*=A^Ty\in \partial f(x)\cap \mathcal{R}(A^T)$ we have
	$$
	D_f^{x^*}(x,\hat{x})\leq\frac{1}{\tilde{\sigma}^2_{\min }(A)}\cdot\frac{|\hat{x}|_{\min }+2\lambda}{|\hat{x}|_{\min }}\cdot\|Ax-b\|_2^2.
	$$
\end{lemma}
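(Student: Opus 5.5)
We need to prove the error bound of Lemma \ref{lemma3:2}: $D_f^{x^*}(x,\hat{x})\le \tilde{\sigma}_{\min}^{-2}(A)\,\frac{|\hat{x}|_{\min}+2\lambda}{|\hat{x}|_{\min}}\,\|Ax-b\|^2$ for $x^*=A^Ty\in\partial f(x)$.

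The plan is to use the optimality conditions of \eqref{eq1.2} and then split the Bregman distance into a quadratic part and an $\ell_1$ part, controlling each by $\|x-\hat x\|$ restricted to a suitable support set.

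\textbf{Step 1 (optimality).} Since $\hat x$ solves \eqref{eq1.2}, KKT gives some $\hat x^*=A^T\hat y\in\partial f(\hat x)$. Consequently $x^*-\hat x^*\in\mathcal{R}(A^T)$, and
$$\langle x^*-\hat x^*,x-\hat x\rangle=\langle y-\hat y,A(x-\hat x)\rangle=\langle y-\hat y,Ax-b\rangle .$$

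\textbf{Step 2 (upper bound on the Bregman distance).} The symmetric identity
$$D_f^{x^*}(x,\hat x)\le D_f^{x^*}(x,\hat x)+D_f^{\hat x^*}(\hat x,x)=\langle x^*-\hat x^*,x-\hat x\rangle$$
is not directly useful, because $y$ is uncontrolled. Instead, I would use the explicit form from the Example. Write $x^*=x+\lambda s$ with $s\in\partial\|x\|_1$. Then
$$D_f^{x^*}(x,\hat x)=\tfrac12\|x-\hat x\|^2+\lambda(\|\hat x\|_1-\langle s,\hat x\rangle).$$
The $\ell_1$ term vanishes off $\textrm{supp}(\hat x)$. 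On $j\in\textrm{supp}(\hat x)$, the term $|\hat x_j|-s_j\hat x_j$ is nonzero only if $s_j\neq \textrm{sign}(\hat x_j)$. In that case $x_j$ either has the opposite sign or $x_j=0$, so $|x_j-\hat x_j|\ge|\hat x_j|\ge|\hat x|_{\min}$, while $|\hat x_j|-s_j\hat x_j\le 2|\hat x_j|$. Writing $2|\hat x_j|$ as $2|\hat x_j|^2/|\hat x_j|$ and using $|\hat x_j|\le|x_j-\hat x_j|$, this gives
$$\lambda(\|\hat x\|_1-\langle s,\hat x\rangle)\le \frac{2\lambda}{|\hat x|_{\min}}\|x-\hat x\|^2$$
in its relevant coordinates. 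Combining the two parts,
$$D\le \frac{|\hat x|_{\min}+2\lambda}{|\hat x|_{\min}}\,\|x-\hat x\|^2 ,$$
up to the factor $\tfrac12$, which is only slack.

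\textbf{Step 3 (from $\|x-\hat x\|$ to $\|Ax-b\|$).} This is the main obstacle, since $x-\hat x$ need not lie in $\mathcal{R}(A^T)$, so $\|A(x-\hat x)\|\ge\sigma\|x-\hat x\|$ fails in general. Following Schöpfer–Lorenz, I would pass to a better comparison point. Let $J=\{j: x^*_j\text{ and }\hat x^*_j$ lie in a common face$\}$, essentially the coordinates where $\partial f$ is jointly compatible. Define $\hat z$ as the projection of $x$ onto $\{z: Az=b,\ z\text{ sign-consistent with }x^*\}$; this set contains points with the same subgradient $x^*$. By the three-point identity and the fact that $x^*\in\mathcal{R}(A^T)$ is constant on $\{Az=b\}$ modulo the relevant face, one gets
$$D_f^{x^*}(x,\hat x)\le D_f^{x^*}(x,\hat z)+(\text{term controlled as in Step 2}),$$
with $x-\hat z$ supported on $J$. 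The difference then lies in the row space of $A_{:,J}$, so $\|x-\hat z\|\le \tilde\sigma_{\min}(A)^{-1}\|A_{:,J}(x-\hat z)_J\|=\tilde\sigma_{\min}^{-1}\|Ax-b\|$.

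\textbf{Step 4 (assembly).} Squaring the bound from Step 3 and inserting it into Step 2 yields the stated constant. I expect the delicate points to be the construction of $J$ and $\hat z$, and checking that the sign-mismatch coordinates are dominated by $\|x-\hat z\|$. If that fails, I would fall back to citing \cite{schopfer2019linear} directly, as the lemma itself does.
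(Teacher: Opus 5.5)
Your Steps 1 and 2 are correct. The coordinatewise estimate gives $D_f^{x^*}(x,\hat x)\le(\tfrac12+\tfrac{2\lambda}{|\hat x|_{\min}})\|x-\hat x\|^2$. The paper gives no proof of this lemma; it imports it from \cite{schopfer2019linear}, so your fallback is what the paper does.

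\textbf{The gap is Step 3}, which carries the whole content of the lemma. Take ``sign-consistent with $x^*$'' in your sense, namely $s=(x^*-x)/\lambda\in\partial\|z\|_1$. Then the set $\{z: Az=b,\ z \text{ sign-consistent}\}$ can be empty. For example, let $m=n=1$, $A=1$, $b>0$, $y=0$. Then $x^*=x=0$ and $s=0$, which forces $z=0$, while $Az=b$ forces $z=b$. Even when the set is nonempty, it is a polyhedron, so projecting $x$ onto it need not move $x$ inside $\mathcal R(A_{:,J}^T)$. Hence $\|x-\hat z\|\le\tilde\sigma_{\min}^{-1}(A)\|Ax-b\|$ is unjustified.

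The construction also cannot work in general. Since $\langle x^*,z\rangle=\langle y,b\rangle$ is constant on $\{Az=b\}$, $\hat x$ minimizes $D_f^{x^*}(x,\cdot)$ there. A feasible sign-consistent $\hat z$ with your bound would therefore give $D_f^{x^*}(x,\hat x)\le\tfrac12\|x-\hat z\|^2\le\tfrac{1}{2\tilde\sigma_{\min}^2(A)}\|Ax-b\|^2$, with no $|\hat x|_{\min}$ factor. The example above refutes this: $D=\tfrac12 b^2+\lambda b$ but $\tfrac12\|Ax-b\|^2=\tfrac12 b^2$. Finally, Step 4 inserts a bound on $\|x-\hat z\|$ into Step 2, which needs a bound on $\|x-\hat x\|$.

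\textbf{The missing idea} is that $\|x-\hat x\|\le\tilde\sigma_{\min}^{-1}(A)\|A(x-\hat x)\|$ holds directly, with no auxiliary point.
\begin{enumerate}[(a)]
\item Set $w:=x^*-\hat x^*=A^T(y-\hat y)\in\mathcal R(A^T)$ and $v:=x-\hat x=\mathcal S_\lambda(x^*)-\mathcal S_\lambda(\hat x^*)$. Since $\mathcal S_\lambda$ acts coordinatewise and is nondecreasing, $v$ is sign-conformal to $w$: $v_jw_j\ge 0$ for all $j$, and $v_j=0$ whenever $w_j=0$.
\item Minimize $\|Av\|^2$ over the compact set of unit vectors with this property. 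At a minimizer $v^\circ$ with support $J$, the sign constraints on $J$ are strict. So $v^\circ_J$ is a local minimizer of the Rayleigh quotient of $A_{:,J}^TA_{:,J}$, hence an eigenvector with eigenvalue $\mu=\|Av^\circ\|^2$.
\item $\mu=0$ is impossible, because $\langle Av^\circ,y-\hat y\rangle=\langle v^\circ,w\rangle=\sum_{j\in J}v^\circ_jw_j>0$.
\item Thus $\mu$ is a nonzero eigenvalue, so $\mu\ge\sigma_{\min}^2(A_{:,J})\ge\tilde\sigma_{\min}^2(A)$, where $\sigma_{\min}$ is the smallest nonzero singular value. This is exactly where the column-subset quantity $\tilde\sigma_{\min}(A)$ enters.
\end{enumerate}
Combining this with your Step 2 gives $D_f^{x^*}(x,\hat x)\le\tilde\sigma_{\min}^{-2}(A)\,(\tfrac12+\tfrac{2\lambda}{|\hat x|_{\min}})\,\|Ax-b\|^2$. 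This constant is smaller than the stated one.
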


With Lemma \ref{lemma3:2}, we recall the linear convergence for RaSK with respect to Bregman distance.

\begin{theorem}[Theorem 3.2, \cite{schopfer2019linear}; Theorem 4.8, \cite{tondji2024acceleration}]
	\label{th4:2}
	Let $x_k,x_k^*$
	be the sequences generated by the RaSK method with uniform sampling and the matrix $A\in\mathbb{R}^{m\times n}$ has unit-norm rows. Then it converges in expectation to the unique
	solution $\hat{x}$ of the regularized Basis Pursuit problem (\ref{eq1.2}) with a linear rate
	$$
	\mathbb{E} D_f^{x_{k+1}^*}(x_{k+1},\hat{x})
	\leq 
	\left(
	1-\frac{1}{2}\cdot\frac{\tilde{\sigma}^2_{\min }(A)}{m}\cdot\frac{|\hat{x}|_{\min }}{|\hat{x}|_{\min }+2\lambda}
	\right)
	\mathbb{E}D_f^{x_k^*}(x_k,\hat{x}).
	$$
\end{theorem}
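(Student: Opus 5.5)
The plan is to write RaSK in the dual variables and analyse it as randomized coordinate descent on $\Psi$ (exact data, so $\tilde b=b$), using Lemma~\ref{lemma3:1} to turn dual suboptimality into Bregman distance.

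\textbf{Step 1: one-step descent.} In dual form, RaSK is $y_{k+1}=y_k-e_{i_k}(a_{i_k}^Tx_k-b_{i_k})$ with $x_k^*=A^Ty_k$ and $x_k=\nabla f^*(x_k^*)$. Since $\nabla_{i}\Psi(y_k)=a_i^T\nabla f^*(A^Ty_k)-b_i=a_i^Tx_k-b_i$ and $L_i=\|a_i\|^2=1$, the Descent Lemma with $t=-\nabla_{i_k}\Psi(y_k)$ gives
$$\Psi(y_{k+1})\le \Psi(y_k)-\tfrac12 (a_{i_k}^Tx_k-b_{i_k})^2 .$$

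\textbf{Step 2: expectation and translation to Bregman distance.} Under uniform sampling,
$$\mathbb{E}_k\Psi(y_{k+1})\le \Psi(y_k)-\tfrac{1}{2m}\|Ax_k-b\|^2 .$$
Subtracting $\hat\Psi$ and applying Lemma~\ref{lemma3:1} on both sides (valid because $x_k^*=A^Ty_k\in\mathcal R(A^T)$ and $b\in\mathcal R(A)$) yields
$$\mathbb{E}_kD_f^{x_{k+1}^*}(x_{k+1},\hat x)\le D_f^{x_k^*}(x_k,\hat x)-\tfrac{1}{2m}\|Ax_k-b\|^2 .$$

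\textbf{Step 3: error bound.} Apply Lemma~\ref{lemma3:2} with $x=x_k$ and $x^*=x_k^*=A^Ty_k\in\partial f(x_k)\cap\mathcal R(A^T)$. Here $x_k^*\in\partial f(x_k)$ because $x_k=\nabla f^*(x_k^*)$. This gives
$$\|Ax_k-b\|^2\ge \tilde\sigma_{\min}^2(A)\,\frac{|\hat x|_{\min}}{|\hat x|_{\min}+2\lambda}\,D_f^{x_k^*}(x_k,\hat x).$$
Substituting into Step 2 produces the contraction factor $1-\frac12\frac{\tilde\sigma^2_{\min}(A)}{m}\frac{|\hat x|_{\min}}{|\hat x|_{\min}+2\lambda}$. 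Taking full expectation by the tower property completes the argument.

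\textbf{Main obstacle.} The delicate point is verifying the hypotheses of Lemmas~\ref{lemma3:1} and~\ref{lemma3:2} along the whole trajectory. This requires $x_k^*$ to stay in $\mathcal R(A^T)$, which holds by induction from $y_0=0$, and requires $x_k^*\in\partial f(x_k)$, which follows from conjugate duality. The degenerate case $b=0$ must also be handled: then $\hat x=0$, Lemma~\ref{lemma3:2}'s constant is not defined, and the convention $|\hat x|_{\min}/(|\hat x|_{\min}+2\lambda)=1$ (or a direct argument) is needed.
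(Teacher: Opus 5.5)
Your proposal is correct. It follows the standard argument behind the cited results, which is also the template the paper uses for Theorem~\ref{th4.3}: a coordinate-descent step on $\Psi$ with $L_{i}=1$, conversion to Bregman distance via Lemma~\ref{lemma3:1}, the error bound of Lemma~\ref{lemma3:2}, and the tower property. The paper itself only recalls this theorem without proof, and \cite{schopfer2019linear} performs the same one-step estimate directly in the primal using the $1$-Lipschitz gradient of $f^*$, which is equivalent to your dual computation. Your concern about $b=0$ does not affect the result: with $y_0=0$ the iterates never leave $\hat x=0$, so both sides of the inequality are zero.
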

We now introduce Lemma 4.5 in \cite{lorenz2023minimal}, which will be useful in the following convergence analysis. 
\begin{lemma}[Lemma 4.5 in \cite{lorenz2023minimal}]
	\label{lemma1}
	Let $x,y,z\in\mathbb{R}^n$ such that $(y,z)$ is linearly independent. Then it holds that
	$$
	\|(I-P_{\langle \{y,z\} \rangle})x\|^2
	=\|(I-P_z)\circ(I-P_y)x\|^2-\frac{\langle (I-P_y)x,P_zy\rangle^2}{\|(I-P_{z})y\|^2}.
	$$
\end{lemma}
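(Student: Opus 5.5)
Since $(y,z)$ is linearly independent, I will decompose the plane $\langle\{y,z\}\rangle$ orthogonally using Gram--Schmidt with $z$ taken first. Set $u:=(I-P_z)y$. Linear independence gives $u\neq 0$, and $\langle\{y,z\}\rangle=\langle\{z\}\rangle\oplus\langle\{u\}\rangle$ is an orthogonal sum. Hence
\[
P_{\langle\{y,z\}\rangle}=P_z+P_u,
\]
and by Pythagoras,
\[
\|(I-P_{\langle\{y,z\}\rangle})x\|^2=\|(I-P_z)x\|^2-\frac{\langle u,x\rangle^2}{\|u\|^2}.
\]
So everything reduces to showing that the right-hand side of Lemma~\ref{lemma1} equals this expression.

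Write $x=P_yx+r$ with $r:=(I-P_y)x$, so that $r\perp y$. Since $(I-P_z)r=(I-P_z)\circ(I-P_y)x$, expanding $(I-P_z)x=(I-P_z)r+(I-P_z)P_yx$ gives
\[
\|(I-P_z)x\|^2=\|(I-P_z)r\|^2+2\langle (I-P_z)r,(I-P_z)P_yx\rangle+\|(I-P_z)P_yx\|^2 .
\]
Each term involving $P_yx=\frac{\langle x,y\rangle}{\|y\|^2}y$ can be expressed through $u$, because $(I-P_z)P_yx=\frac{\langle x,y\rangle}{\|y\|^2}u$.

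For the inner products I will use:
\begin{itemize}
\item the idempotence and self-adjointness of $I-P_z$;
\item $\langle u,y\rangle=\|u\|^2$;
\item $\langle r,y\rangle=0$, which gives $\langle (I-P_z)r,u\rangle=\langle r,u\rangle=-\langle r,P_zy\rangle$;
\item $\langle u,x\rangle=\langle u,r\rangle+\frac{\langle x,y\rangle}{\|y\|^2}\|u\|^2$.
\end{itemize}
Substituting, the right-hand side becomes
\[
\|(I-P_z)r\|^2-\frac{\langle r,P_zy\rangle^2}{\|u\|^2},
\]
which is exactly the claimed formula, since $\langle r,P_zy\rangle=\langle (I-P_y)x,P_zy\rangle$.

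The main obstacle is the bookkeeping. The cross term and the $\|(I-P_z)P_yx\|^2$ term must combine with the expansion of $\langle u,x\rangle^2/\|u\|^2$ into a perfect square. I will do this by writing $c:=\langle x,y\rangle/\|y\|^2$ and $s:=\langle r,u\rangle$, and checking the scalar identity
\[
c^2\|u\|^2+2cs-\frac{(s+c\|u\|^2)^2}{\|u\|^2}=-\frac{s^2}{\|u\|^2}.
\]
This holds by direct expansion.
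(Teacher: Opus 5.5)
Your proof is correct and complete. Linear independence gives $u=(I-P_z)y\neq 0$, so $P_{\langle\{y,z\}\rangle}=P_z+P_u$, and Pythagoras yields $\|(I-P_z)x\|^2-\langle u,x\rangle^2/\|u\|^2$. The identities $\langle u,y\rangle=\|u\|^2$ and $\langle r,u\rangle=-\langle r,P_zy\rangle$ (from $r\perp y$) hold, and your scalar identity expands correctly.

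The paper gives no proof of this lemma. It quotes it from \cite{lorenz2023minimal} and only applies it in the proof of Theorem~\ref{th4.3}, with $x=\nabla\tilde{\Psi}(y_k)$, $y=e_{i_k}\nabla_{i_k}\tilde{\Psi}(y_k)$, $z=v_k^*$. So there is no in-paper argument to compare with, and yours serves as a self-contained verification.

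You can shorten it. Since $P_yx\in\langle\{y,z\}\rangle$, you have $(I-P_{\langle\{y,z\}\rangle})x=(I-P_{\langle\{y,z\}\rangle})r$, so apply your Pythagoras step to $r$ instead of $x$. This gives $\|(I-P_z)r\|^2-\langle u,r\rangle^2/\|u\|^2$ at once, and $\langle u,r\rangle=-\langle P_zy,r\rangle$ finishes the proof. The cross term and the scalar identity then disappear; that identity is just the algebraic trace of the fact that $I-P_{\langle\{y,z\}\rangle}$ annihilates $P_yx$.
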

With the help of Theorem \ref{th4:2} and Lemma \ref{lemma1}, we are ready to present the convergence result of Algorithm \ref{al4}, which is similar to that of Theorem 4.6 in \cite{lorenz2023minimal}.
\begin{theorem}
	\label{th4.3}
Suppose that instead of exact data $b\in\mathcal{R}(A)$ only a corrupted right-hand side $\tilde{b}=b+b^c$ with $\|b^c\|_0=\beta m$ and $\|b^c\|_2\leq\delta$ is given in (\ref{eq1.2}). Assume that $0<\beta<q\leq 1-\beta$ and the matrix $A\in\mathbb{R}^{m\times n}$ has unit-norm rows. If the iterates $y_k,x_k,x_k^*=A^Ty_k\in\partial f(x_k)\cap \mathcal{R}(A)$ of Algorithm \ref{al4} are computed with $\tilde{b}$ and
	 $$\tilde{y}_k=\nabla \tilde{\Psi}(y_k)-e_{i_k}\nabla_{i_k} \tilde{\Psi}(y_k),$$ 
	then for any $\gamma>0$,
	it holds that
\begin{equation}
	\begin{aligned}
	\label{eq4:1}
	\mathbb{E}D_f^{x_{k+1}^*}(x_{k+1},\hat{x})
	&	\leq 
	\left(
	1-\frac{1}{2(2\gamma+\|A\|_2^2)}\cdot\frac{\tilde{\sigma}_{q-\beta,\min}^2}{qm}\cdot\frac{|\hat{x}|_{\min }}{|\hat{x}|_{\min }+2\lambda}
	\right)\mathbb{E}D_f^{x_k^*}(x_k,\hat{x})+\frac{1}{4\gamma}\delta^2
	\\
	&	-\frac{1}{2(2\gamma+\|A\|_2^2)}\mathbb{E}\left(\frac{\langle\tilde{y}_k,v_k^*\rangle^2}{\|v_k^*\|^2}
	+\frac{\langle v_k^*,e_{i_k}\nabla_{i_k} \tilde{\Psi}(y_k)\rangle^2
		\cdot
		\langle \tilde{y}_k,v_k^*\rangle^2}{\|v_k^*\|^2\cdot(\|v_k^*\|^2
		\|e_{i_k}\nabla_{i_k} \tilde{\Psi}(y_k)\|^2
		-\langle v_k^*,e_{i_k}\nabla_{i_k} \tilde{\Psi}(y_k)\rangle^2)}\right).
	\end{aligned}
\end{equation}	
\end{theorem}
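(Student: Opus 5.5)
Starting point: the one-step inequality (\ref{eq5:6}), combined with Lemma \ref{lemma3:1}. The latter turns $\Psi(y_{k+1})-\hat\Psi$ and $\Psi(y_k)-\hat\Psi$ into $D_f^{x_{k+1}^*}(x_{k+1},\hat x)$ and $D_f^{x_k^*}(x_k,\hat x)$. This gives
\[
D_f^{x_{k+1}^*}(x_{k+1},\hat x)\le D_f^{x_k^*}(x_k,\hat x)+\frac{\delta^2}{4\gamma}+Q_k(\alpha,w),
\]
where $Q_k(\alpha,w)=\langle d,g\rangle+\frac{c}{2}\|d\|^2$, with $c:=2\gamma+\|A\|_2^2$, $d=-\alpha e_{i_k}\nabla_{i_k}\tilde\Psi(y_k)+wv_k^*$ and $g=\nabla\tilde\Psi(y_k)$. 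Since $(\alpha_k,w_k)$ in (\ref{eq4.9}) is exactly the minimizer of this quadratic over $\operatorname{span}\{u,v_k^*\}$ with $u:=e_{i_k}\nabla_{i_k}\tilde\Psi(y_k)$, completing the square yields
\[
Q_k(\alpha_k,w_k)=-\frac{1}{2c}\|P_{\langle\{u,v_k^*\}\rangle}g\|^2 .
\]
The whole problem thus reduces to bounding the squared norm of a projection onto a two-dimensional span from below.

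Next I decompose this projection. I write $\|P_{\langle\{u,v\}\rangle}g\|^2=\|g\|^2-\|(I-P_{\langle\{u,v\}\rangle})g\|^2$ and apply Lemma \ref{lemma1} with $x=g$, $y=u$, $z=v:=v_k^*$. Since $(I-P_u)g=g-u=\tilde y_k$ (the $i_k$-coordinate of $g$ is exactly $u$, so $\tilde y_k\perp u$), the lemma gives
\[
\|(I-P_{\langle\{u,v\}\rangle})g\|^2=\|(I-P_v)\tilde y_k\|^2-\frac{\langle \tilde y_k,P_vu\rangle^2}{\|(I-P_v)u\|^2}.
\]
Using $\|(I-P_v)\tilde y_k\|^2=\|\tilde y_k\|^2-\langle\tilde y_k,v\rangle^2/\|v\|^2$ and $\|g\|^2=\|u\|^2+\|\tilde y_k\|^2$, I obtain
\[
\|P_{\langle\{u,v\}\rangle}g\|^2=\|u\|^2+\frac{\langle\tilde y_k,v\rangle^2}{\|v\|^2}+\frac{\langle v,u\rangle^2\langle\tilde y_k,v\rangle^2}{\|v\|^2(\|v\|^2\|u\|^2-\langle v,u\rangle^2)},
\]
where I expand $P_vu$ and $\|(I-P_v)u\|^2=(\|u\|^2\|v\|^2-\langle u,v\rangle^2)/\|v\|^2$. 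The last two terms are precisely the expectation terms in (\ref{eq4:1}). The degenerate cases (linear dependence, or $v_k^*=0$) are handled separately: there the choice $\alpha_k=1$ with (\ref{eq5:3}) still gives $Q_k\le -\frac{1}{2c}\|u\|^2$, up to verifying the constant, and the extra terms are interpreted as zero.

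It remains to bound $-\frac{1}{2c}\mathbb E_k\|u\|^2=-\frac{1}{2c}\mathbb E_k(a_{i_k}^Tx_k-\tilde b_{i_k})^2$ using the quantile sampling. Since $q\le 1-\beta$ and $N_2$ consists of the $qm$ smallest residuals, I follow the Quantile-RaSK argument of \cite{zhang2024quantile}. At least $(q-\beta)m$ of the indices in $N_2$ are uncorrupted, and on those $\tilde b_i=b_i$. Hence
\[
\mathbb E_k\|u\|^2\ge\frac{1}{qm}\sum_{i\in N_2\setminus\operatorname{supp}(b^c)}(a_i^Tx_k-b_i)^2=\frac1{qm}\|A_Ix_k-b_I\|^2
\]
for some $I$ with $|I|\ge(q-\beta)m$. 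Then I apply Lemma \ref{lemma3:2} to the subsystem $A_I x=b_I$, which has the same solution set structure, with $\tilde\sigma_{\min}(A_I)\ge\tilde\sigma_{q-\beta,\min}$. This gives
\[
\|A_Ix_k-b_I\|^2\ge \tilde\sigma_{q-\beta,\min}^2\frac{|\hat x|_{\min}}{|\hat x|_{\min}+2\lambda}D_f^{x_k^*}(x_k,\hat x).
\]
The factor $\frac12$ in the rate can be absorbed because the inequality is only weakened, or it arises as in Theorem \ref{th4:2}. Taking full expectation finishes the proof.

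The main obstacle is this last step. Lemma \ref{lemma3:2} needs $x_k^*\in\mathcal R(A_I^T)$ and needs $\hat x$ to be the regularized solution for the subsystem $A_I$. This is delicate, because $x_k^*=A^Ty_k$ involves all rows and $\hat x$ solves the full system. I would first try the approach of \cite{zhang2024quantile}, which invokes the restricted singular value $\tilde\sigma_{q-\beta,\min}$ over submatrices $A_{I,J}$ precisely to sidestep this. Following that, I would bound $\|A_I(x_k-\hat x)\|$ through the support $J$ of $x_k-\hat x$, together with the error-bound argument of \cite{schopfer2019linear} adapted to $A_{I,J}$.
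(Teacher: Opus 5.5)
Your argument is the paper's proof: the same completion of the square in (\ref{eq5:6}) giving $-\frac{1}{2c}\|P_{\langle\{u,v_k^*\}\rangle}\nabla\tilde\Psi(y_k)\|^2$, the same use of Lemma~\ref{lemma1} with $x=\nabla\tilde\Psi(y_k)$, $y=u$, $z=v_k^*$, and the same restriction of $\mathbb{E}_k(a_{i_k}^Tx_k-\tilde b_{i_k})^2$ to the accepted uncorrupted rows $B\setminus S$. No extra factor $\frac12$ has to be absorbed: the $\frac{1}{2(2\gamma+\|A\|_2^2)}$ in the rate is exactly your $\frac1{2c}$. The paper then does what your display does, citing Lemma~\ref{lemma3:2} for the block $A_{B\setminus S}$ with constant $\tilde\sigma_{q-\beta,\min}$, without checking the two hypotheses you list.

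Your suspicion is correct: this is a genuine gap, in the paper as in your proposal. The repair you sketch cannot close it, because the pointwise inequality $\|A_Ix-b_I\|^2\ge\tilde\sigma_{q-\beta,\min}^2\frac{|\hat x|_{\min}}{|\hat x|_{\min}+2\lambda}D_f^{x^*}(x,\hat x)$ for $x^*\in\partial f(x)\cap\mathcal R(A^T)$ is false for proper row subsets $I$.

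For $\lambda=0$ (so $x^*=x$ and $D_f^{x}(x,\hat x)=\frac12\|x-\hat x\|^2$), take $0\ne e\in\ker A_I\cap\mathcal R(A^T)$ and $x=\hat x+e$. Then $x\in\mathcal R(A^T)$ and $A_Ix=b_I$, but $D_f^x(x,\hat x)>0$ while $\tilde\sigma_{q-\beta,\min}>0$. Such $e$ exists whenever $\operatorname{rank}A>|I|$, e.g.\ for every proper row subset when $A$ has full row rank, which is the underdetermined regime of Table~\ref{table3}. With $|I|=(q-\beta)m$ and corruptions $b^c_i=a_i^Te$ on $\beta m$ rows outside $I$, every accepted residual vanishes at this state. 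So even $\mathbb E_k(a_{i_k}^Tx_k-\tilde b_{i_k})^2\ge\frac{\tilde\sigma^2_{q-\beta,\min}}{qm}\frac{|\hat x|_{\min}}{|\hat x|_{\min}+2\lambda}D_f^{x_k^*}(x_k,\hat x)$ fails. For $\lambda>0$, the regularized solution of the subsystem $A_Ix=b_I$ plays the same role whenever it differs from $\hat x$.

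Passing to supports $J$ does not help. $\tilde\sigma_{q-\beta,\min}$ is built from smallest \emph{nonzero} singular values, so $\|A_{I,J}z\|\ge\tilde\sigma_{q-\beta,\min}\|z\|$ holds only for $z\in\mathcal R(A_{I,J}^T)$. The argument of \cite{schopfer2019linear} relies on $x^*\in\mathcal R(A^T)$ and on the optimality of $\hat x$ for the \emph{same} matrix, and neither holds for $A_I$.

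The step does go through under an extra hypothesis in the spirit of \cite{haddock2022quantile}: every $A_I$ with $|I|=(q-\beta)m$ has full column rank. Then for any $I'\subseteq B\setminus S$ with $|I'|=(q-\beta)m$:
\begin{itemize}
\item $x_k^*\in\mathcal R(A_{I'}^T)=\mathbb R^n$;
\item $\hat x$ is the unique solution of $A_{I'}x=b_{I'}$;
\item Lemma~\ref{lemma3:2} applies verbatim to $A_{I'}$ with $\tilde\sigma_{\min}(A_{I'})\ge\tilde\sigma_{q-\beta,\min}$;
\item $\|A_{B\setminus S}x_k-b_{B\setminus S}\|^2\ge\|A_{I'}x_k-b_{I'}\|^2$, which yields the stated rate.
\end{itemize}

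A smaller point concerns your handling of degenerate steps. It is right when $u=0$, or when $v_k^*\ne0$ is parallel to $e_{i_k}$ (then $Q_k=-\frac1{2c}\|u\|^2$ exactly and $\langle\tilde y_k,v_k^*\rangle=0$). It is not right when $v_k^*=0$, which always happens at $k=0$ since $y_{-1}=y_0$. Algorithm~\ref{al4} then takes $(\alpha_k,w_k)=(1,0)$, which does not minimize the model, and $Q_k=(\frac c2-1)\|u\|^2=-\frac1{2c}\|u\|^2+\frac{(c-1)^2}{2c}\|u\|^2$ with $c=2\gamma+\|A\|_2^2>1$. The paper's appeal to the ``minimizing property'' of $(\alpha_k,w_k)$ silently skips this branch too; the right-hand side of (\ref{eq4:1}) is not even defined when $v_k^*=0$. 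You would need $\alpha_k=1/c$ in that branch, or to exclude that case from the statement.
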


\begin{proof}
	It follows from (\ref{eq5:6}) and the minimizing property of $\alpha_k$ and $w_k$ that we have
	\begin{equation}
	\label{eq4.8}
	\begin{aligned}
	&~~~~\Psi(y_{k+1})\\
	&\leq
	\Psi(y_k)+\langle -\alpha_k e_{i_k}\nabla_{i_k}\tilde{\Psi}(y_k)+w_kv_k^*, \nabla \tilde{\Psi}(y_k)\rangle +(\frac{\|A\|_2^2}{2}+\gamma)\|-\alpha_k e_{i_k}\nabla_{i_k}\tilde{\Psi}(y_k)+w_kv_k^*\|^2+\frac{1}{4\gamma}\delta^2\\
	&=\Psi(y_k)
	+ \frac{1}{2(2\gamma+\|A\|_2^2)}\left(\|\nabla \tilde{\Psi}(y_k) +(2\gamma+\|A\|_2^2)(-\alpha_k e_{i_k}\nabla_{i_k} \tilde{\Psi}(y_k)+w_kv_k^*)\|^2-\|\nabla \tilde{\Psi}(y_k)\|^2\right)+\frac{1}{4\gamma}\delta^2\\
	&\leq \Psi(y_k)+\frac{1}{2(2\gamma+\|A\|_2^2)}\inf_{\alpha,w}\left(\|\nabla \tilde{\Psi}(y_k) -\alpha e_{i_k}\nabla_{i_k} \tilde{\Psi}(y_k)+wv_k^*\|^2-\|\nabla \tilde{\Psi}(y_k)\|^2\right)+\frac{1}{4\gamma}\delta^2\\
	 &=\Psi(y_k)+\frac{1}{2(2\gamma+\|A\|_2^2)}\left(\|(I-P_{\langle \{e_{i_k}\nabla_{i_k} \tilde{\Psi}(y_k),v_k^*\}\rangle})
\nabla\tilde{\Psi}(y_k)\|^2-\|\nabla\tilde{\Psi}(y_k)\|^2\right)+\frac{1}{4\gamma}\delta^2.
\end{aligned}
\end{equation}
	We first calculate the expression 
	$$
	\begin{aligned}
	(I-P_{ e_{i_k}\nabla_{i_k} \tilde{\Psi}(y_k)})\nabla \tilde{\Psi}(y_k)
	&=\nabla \tilde{\Psi}(y_k)-\frac{\langle \nabla \tilde{\Psi}(y_k), e_{i_k}\nabla_{i_k} \tilde{\Psi}(y_k)\rangle}{\| e_{i_k}\nabla_{i_k} \tilde{\Psi}(y_k)\|^2}e_{i_k}\nabla_{i_k} \tilde{\Psi}(y_k)\\
	&=\nabla \tilde{\Psi}(y_k)-e_{i_k}\nabla_{i_k} \tilde{\Psi}(y_k).
	\end{aligned}
	$$
	Let $\tilde{y}_k=(I-P_{ e_{i_k}\nabla_{i_k} \tilde{\Psi}(y_k)})\nabla \tilde{\Psi}(y_k)$ and then we obtain
	\begin{equation}
	\label{eq4.5}
	\begin{aligned}
	\|(I-P_{v_k^*})\circ(I-P_{ e_{i_k}\nabla_{i_k} \tilde{\Psi}(y_k)})\nabla \tilde{\Psi}(y_k)\|^2
	&=\|\tilde{y}_k\|^2-\frac{\langle\tilde{y}_k,v_k^*\rangle^2}{\|v_k^*\|^2},    
	\end{aligned}
	\end{equation}
	and
	\begin{equation}
	\label{eq4.6}
	\begin{aligned}
&~~~~\frac{\langle (I-P_{ e_{i_k}\nabla_{i_k} \tilde{\Psi}(y_k)})\nabla \tilde{\Psi}(y_k), P_{v_k^*}e_{i_k}\nabla_{i_k} \tilde{\Psi}(y_k)\rangle^2}{\|(I-P_{v_k^*})e_{i_k}\nabla_{i_k} \tilde{\Psi}(y_k)\|^2}\\
	&=\frac{\langle \tilde{y}_k,\frac{\langle v_k^*,e_{i_k}\nabla_{i_k} \tilde{\Psi}(y_k)\rangle}{\|v_k^*\|^2}v_k^*\rangle^2 
	}{\|
		e_{i_k}\nabla_{i_k} \tilde{\Psi}(y_k)-\frac{\langle v_k^*,e_{i_k}\nabla_{i_k} \tilde{\Psi}(y_k)\rangle}{\|v_k^*\|^2}v_k^*
		\|^2}\\
	&=\frac{\langle v_k^*,e_{i_k}\nabla_{i_k} \tilde{\Psi}(y_k)\rangle^2
		\cdot
		\langle \tilde{y}_k,v_k^*\rangle^2}{\|v_k^*\|^2\cdot(\|v_k^*\|^2
		\|e_{i_k}\nabla_{i_k} \tilde{\Psi}(y_k)\|^2
		-\langle v_k^*,e_{i_k}\nabla_{i_k} \tilde{\Psi}(y_k)\rangle^2)}.
	\end{aligned}
	\end{equation}
 According to Lemma \ref{lemma1}, combining with (\ref{eq4.5}) and (\ref{eq4.6}), we have
	\begin{equation}
	\label{eq4.7}
	\begin{aligned}
	&~~~~\|(I-P_{\langle \{e_{i_k}\nabla_{i_k} \tilde{\Psi}(y_k),v_k^*\}\rangle})
	\nabla \tilde{\Psi}(y_k)\|^2\\
	&=\|(I-P_{v_k^*})\circ(I-P_{ e_{i_k}\nabla_{i_k} \tilde{\Psi}(y_k)})\nabla \tilde{\Psi}(y_k)\|^2-\frac{\langle (I-P_{ e_{i_k}\nabla_{i_k} \tilde{\Psi}(y_k)})\nabla \tilde{\Psi}(y_k), P_{v_k^*}e_{i_k}\nabla_{i_k} \tilde{\Psi}(y_k)\rangle^2}{\|(I-P_{v_k^*})e_{i_k}\nabla_{i_k} \tilde{\Psi}(y_k)\|^2}\\
	&=
	\|\tilde{y}_k\|^2-\frac{\langle\tilde{y}_k,v_k^*\rangle^2}{\|v_k^*\|^2}-
	\frac{\langle v_k^*,e_{i_k}\nabla_{i_k} \tilde{\Psi}(y_k)\rangle^2
		\cdot
		\langle \tilde{y}_k,v_k^*\rangle^2}{\|v_k^*\|^2\cdot(\|v_k^*\|^2
		\|e_{i_k}\nabla_{i_k} \tilde{\Psi}(y_k)\|^2
		-\langle v_k^*,e_{i_k}\nabla_{i_k} \tilde{\Psi}(y_k)\rangle^2)}.
	\end{aligned}
	\end{equation}
	Inserting (\ref{eq4.7}) into (\ref{eq4.8}), we deduce
	\begin{equation*}
	\begin{aligned}
	&\Psi(y_{k+1})\leq \Psi(y_k)-\frac{1}{2(2\gamma+\|A\|_2^2)}\|\nabla_{i_k} \tilde{\Psi}(y_k)\|^2+\frac{1}{4\gamma}\delta^2\\
	&~~~~~~~-
\frac{1}{2(2\gamma+\|A\|_2^2)}\left(\frac{\langle\tilde{y}_k,v_k^*\rangle^2}{\|v_k^*\|^2}
	+
	\frac{\langle v_k^*,e_{i_k}\nabla_{i_k} \tilde{\Psi}(y_k)\rangle^2
		\cdot
		\langle \tilde{y}_k,v_k^*\rangle^2}{\|v_k^*\|^2\cdot(\|v_k^*\|^2
		\|e_{i_k}\nabla_{i_k} \tilde{\Psi}(y_k)\|^2
		-\langle v_k^*,e_{i_k}\nabla_{i_k} \tilde{\Psi}(y_k)\rangle^2)}\right).
	\end{aligned}
	\end{equation*}
It follows from Lemma \ref{lemma3:1} and $\nabla_{i_k} \tilde{\Psi}(y_k)=a_{i_k}^Tx_k-\tilde{b}_{i_k}$ that we have
\begin{equation}
\label{eq4:7}
\begin{aligned}
&D_f^{x_{k+1}^*}(x_{k+1},\hat{x})\leq D_f^{x_{k}^*}(x_{k},\hat{x})-\frac{1}{2(2\gamma+\|A\|_2^2)}(a_{i_k}^Tx_k-\tilde{b}_{i_k})^2+\frac{1}{4\gamma}\delta^2\\
&~~~~~~~-
\frac{1}{2(2\gamma+\|A\|_2^2)}\left(\frac{\langle\tilde{y}_k,v_k^*\rangle^2}{\|v_k^*\|^2}
+
\frac{\langle v_k^*,e_{i_k}\nabla_{i_k} \tilde{\Psi}(y_k)\rangle^2
	\cdot
	\langle \tilde{y}_k,v_k^*\rangle^2}{\|v_k^*\|^2\cdot(\|v_k^*\|^2
	\|e_{i_k}\nabla_{i_k} \tilde{\Psi}(y_k)\|^2
	-\langle v_k^*,e_{i_k}\nabla_{i_k} \tilde{\Psi}(y_k)\rangle^2)}\right).
\end{aligned}
\end{equation}

Denote the set
consisting of all acceptable indices in the $k$-th iterate as $B$; the subset consisting of corrupted indices in $B$ is defined by $S$. It holds that $|B|=qm,~0\leq |S|\leq\beta m,~(q-\beta)m\leq|B\backslash S|\leq qm$.
Fix the indices $i_0,\cdots,i_{k-1}$ and only consider $i_k$ as a random variable, it follows from the conditional expectation that we have
\begin{equation}
\label{eq2:2}
\mathbb{E}_k(a_{i_k}^Tx_k-\tilde{b}_{i_k})^2
=P(i_k\in B\backslash S)\mathbb{E}_k[(a_{i_k}^Tx_k-\tilde{b}_{i_k})^2\mid i_k\in B\backslash S]
+
P(i_k\in  S)\mathbb{E}_k[(a_{i_k}^Tx_k-\tilde{b}_{i_k})^2\mid i_k\in S].
\end{equation}
For the first term of (\ref{eq2:2}), according to Lemma \ref{lemma3:2}, it holds that
$$
\begin{aligned}
\mathbb{E}_k[(a_{i_k}^Tx_k-\tilde{b}_{i_k})^2\mid i_k\in B\backslash S]
&=\frac{1}{|B\backslash S|}\|A_{B\backslash S}x_k-b_{B\backslash S}\|_2^2\\
&\geq \frac{1}{|B\backslash S|}\cdot\tilde{\sigma}_{q-\beta,\min}^2\cdot\frac{|\hat{x}|_{\min}}{|\hat{x}|_{\min}+2\lambda}D_f^{x_{k}^*}(x_{k},\hat{x}),
\end{aligned}
$$
where $\tilde{\sigma}_{q-\beta,\min}$ is defined in (\ref{2:1}).
Consequently, (\ref{eq2:2}) can be reformulated into
$$
\mathbb{E}_k(a_{i_k}^Tx_k-\tilde{b}_{i_k})^2\geq 
\frac{\tilde{\sigma}_{q-\beta,\min}^2}{qm}\cdot\frac{|\hat{x}|_{\min }}{|\hat{x}|_{\min }+2\lambda}D_f^{x_{k}^*}(x_{k},\hat{x}).
$$	
Together with Lemma \ref{lemma3:1},
considering all indices $i_0,\cdots,i_k$ as random variables and taking full expectation of (\ref{eq4:7}) gives
$$
	\begin{aligned}
\mathbb{E}D_f^{x_{k+1}^*}(x_{k+1},\hat{x})
	&	\leq 
	\left(
	1-\frac{1}{2(2\gamma+\|A\|_2^2)}\cdot\frac{\tilde{\sigma}_{q-\beta,\min}^2}{qm}\cdot\frac{|\hat{x}|_{\min }}{|\hat{x}|_{\min }+2\lambda}
	\right)\mathbb{E}D_f^{x_k^*}(x_k,\hat{x})+\frac{1}{4\gamma}\delta^2
	\\
&	-\frac{1}{2(2\gamma+\|A\|_2^2)}\mathbb{E}\left(\frac{\langle\tilde{y}_k,v_k^*\rangle^2}{\|v_k^*\|^2}
+\frac{\langle v_k^*,e_{i_k}\nabla_{i_k} \tilde{\Psi}(y_k)\rangle^2
		\cdot
		\langle \tilde{y}_k,v_k^*\rangle^2}{\|v_k^*\|^2\cdot(\|v_k^*\|^2
		\|e_{i_k}\nabla_{i_k} \tilde{\Psi}(y_k)\|^2
		-\langle v_k^*,e_{i_k}\nabla_{i_k} \tilde{\Psi}(y_k)\rangle^2)}\right).
	\end{aligned}
	$$
The proof is completed.
\end{proof}

\begin{remark}
\label{remark3.3}
\begin{enumerate}[(a)]
	\item According to Cauchy-Schwartz's inequality, we have 
	$$
	\|v_k^*\|^2
	\|e_{i_k}\nabla_{i_k} \tilde{\Psi}(y_k)\|^2
	-\langle v_k^*,e_{i_k}\nabla_{i_k} \tilde{\Psi}(y_k)\rangle^2\geq 0,
	$$
	thus the term in second line of (\ref{eq4:1}) in Theorem \ref{th4.3} is negative.
	\item Theorem \ref{th4.3} establishes an upper bound for Quantile-RaSK-MM with respect to Bregman distance, which is composed of the linear convergence term and convergence horizon term. 
	There holds $D_{f}^{x_k^*}(x_k,\hat{x})\rightarrow 0$ as $\delta\rightarrow 0$ and $k\rightarrow \infty$.	
The convergence horizon $\frac{1}{4\gamma}\delta^2$ caused by corruptions will be considered and addressed in our future work, which should be a valuable topic to break through the convergence horizon. 
	\item The selection of the positive parameter $\gamma$ involves a trade-off between the convergence rate and the convergence horizon. 
	We explore the effect of the value of $\gamma$ on Quantile-RaSK-MM by extensive numerical tests.
\end{enumerate}

\end{remark}

\subsection{The convergence analysis of RaSK-MM}
\label{sec4.2}

We now present convergence results of RaSK-MM in noisy and exact cases, respectively, which is a special instance of Quantile-RaSK-MM in the case of $\beta=0$ and $q=1$.

\subsubsection{Extensions to noisy linear systems}

In practical applications, the measurement right-hand vector in the linear system $Ax=b$ is often contaminated by small but widespread noise.
Suppose that the noisy vector $\tilde{b}$ is of the form $$\tilde{b}=b+r,$$ 
where $b=A\hat{x}$ is the exact data, $r$ is the additive noise and $\|r\|\leq \delta$.
The convergence result of RaSK-MM is shown in Theorem \ref{th4.5}, which can be directly derived from Theorem \ref{th4.3}.

\begin{theorem}
	\label{th4.5}
	Assume that instead of the exact right-hand side $b$ only a noisy right-hand side $\tilde{b}=b+r$ is given with $\|r\|\leq \delta$ in (\ref{eq1.2}).
	Assume that $0<\beta<q\leq 1-\beta$ and the matrix $A\in\mathbb{R}^{m\times n}$ has unit-norm rows. If the iterates $y_k,x_k,x_k^*=A^Ty_k\in\partial f(x_k)\cap \mathcal{R}(A)$ of RaSK-MM are computed with $\tilde{b}$ and
	$$\tilde{y}_k=\nabla \tilde{\Psi}(y_k)-e_{i_k}\nabla_{i_k} \tilde{\Psi}(y_k).$$ 
	Then for any $\gamma>0$,
	it holds that
	$$
	\begin{aligned}
	&\mathbb{E}D_f^{x_{k+1}^*}(x_{k+1},\hat{x})
	\leq 
	\left(
	1-\frac{1}{2(2\gamma+\|A\|_2^2)}\cdot\frac{\tilde{\sigma}^2_{\min }(A)}{m}\cdot\frac{|\hat{x}|_{\min }}{|\hat{x}|_{\min }+2\lambda}
	\right)
	\mathbb{E}D_f^{x_k^*}(x_k,\hat{x})\\
	&~~~~~~~	-\frac{1}{2(2\gamma+\|A\|_2^2)}	\mathbb{E}\left(\frac{\langle v_k^*,e_{i_k}\nabla_{i_k} \tilde{\Psi}(y_k)\rangle^2
		\cdot
		\langle \tilde{y}_k,v_k^*\rangle^2}{\|v_k^*\|^2\cdot(\|v_k^*\|^2
		\|e_{i_k}\nabla_{i_k} \tilde{\Psi}(y_k)\|^2
		-\langle v_k^*,e_{i_k}\nabla_{i_k} \tilde{\Psi}(y_k)\rangle^2)}
	+
\frac{\langle\tilde{y}_k,v_k^*\rangle^2}{\|v_k^*\|^2}
	\right)
	+\frac{1}{4\gamma}\delta^2.
	\end{aligned}
	$$
	
\end{theorem}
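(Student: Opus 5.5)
The plan is to obtain Theorem \ref{th4.5} as a specialization of the argument of Theorem \ref{th4.3}, with the contamination $b^*=r$ taken dense rather than sparse. The first part of that argument is deterministic, so I would reuse it unchanged. The splitting (\ref{eq5:6}) only uses $\tilde{\Psi}(y)=\Psi(y)+\langle b-\tilde{b},y\rangle$, Young's inequality with parameter $\gamma>0$, and $\|\tilde{b}-b\|=\|r\|\leq\delta$; none of these refers to sparsity of the perturbation. Likewise, the chain (\ref{eq4.8}) uses only that $(\alpha_k,w_k)$ from (\ref{eq4.9}) minimizes the right-hand side of (\ref{eq5:6}), together with Lemma \ref{lemma1}, (\ref{eq4.5})–(\ref{eq4.7}) and Lemma \ref{lemma3:1}. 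These give, for whatever index $i_k$ is drawn, the one-step bound (\ref{eq4:7}):
\begin{equation*}
D_f^{x_{k+1}^*}(x_{k+1},\hat{x})\leq D_f^{x_{k}^*}(x_{k},\hat{x})-\tfrac{1}{2(2\gamma+\|A\|_2^2)}(a_{i_k}^Tx_k-\tilde{b}_{i_k})^2+\tfrac{1}{4\gamma}\delta^2-\tfrac{1}{2(2\gamma+\|A\|_2^2)}M_k ,
\end{equation*}
where $M_k$ is the nonnegative momentum term appearing in the statement. I would note that $M_k\geq 0$ by Remark \ref{remark3.3}(a), and that the two summands of $M_k$ are exactly the ones in the statement, written in the other order. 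In the linearly dependent branch, (\ref{eq5:3}) with $\alpha_k=1$, I would check that the same bound holds with the second summand of $M_k$ read as $0$, because the two-dimensional projection then collapses to a one-dimensional one.

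The second step is the conditional expectation of $(a_{i_k}^Tx_k-\tilde{b}_{i_k})^2$ given $i_0,\dots,i_{k-1}$. The standing hypothesis $0<\beta<q\leq 1-\beta$ is a condition on the sampler, and I would keep it as the paper does. I would partition the admissible index set $B$ into clean rows and rows where the contamination is active, exactly as in (\ref{eq2:2}). With dense noise, however, every row carries some perturbation. Two natural choices of $S$, the rows treated as corrupted, are all rows with $r_i\neq 0$, or only the at most $\beta m$ rows with $|r_i|$ largest. In both cases the convex combination in (\ref{eq2:2}) is bounded below by its nonnegative first term, and on the clean rows Lemma \ref{lemma3:2} should be applied, as in Theorem \ref{th4.3}, to the full matrix $A$. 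This is what turns $\tilde{\sigma}_{q-\beta,\min}^2/(qm)$ into the constant $\tilde{\sigma}^2_{\min}(A)/m$ appearing in the statement.

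The main obstacle is that, unlike the corrupted case, no rows carry exact data, so $\tilde{b}_i\neq b_i$ on the rows used in that first term. I would first try to move the noise into the horizon: rewrite the descent term by $(a_i^Tx_k-b_i)^2\leq 2(a_i^Tx_k-\tilde{b}_i)^2+2r_i^2$. Summing gives $\tfrac1m\|Ax_k-\tilde b\|^2\geq \tfrac1{2m}\|Ax_k-b\|^2-\tfrac1m\|r\|^2$, and the resulting additive $O(\delta^2/(m(2\gamma+\|A\|_2^2)))$ would be absorbed into $\tfrac{1}{4\gamma}\delta^2$. If the constants do not close this way, I would instead redo (\ref{eq5:6}) with the Young step applied so that the descent term involves $\nabla_{i_k}\Psi(y_k)=a_{i_k}^Tx_k-b_{i_k}$ directly, paying the noise only once through $\tfrac{1}{4\gamma}\delta^2$. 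After that, I would insert the bound into the one-step inequality, take full expectation, and use Lemma \ref{lemma3:1} to conclude.
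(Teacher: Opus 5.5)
You correctly identify the step that the paper's one-line reduction to Theorem~\ref{th4.3} passes over. That proof uses $\tilde b_i=b_i$ on the sampled clean rows, and under dense noise there are no such rows. Neither of your remedies, however, recovers the stated constants.

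Your splitting $(a_i^Tx_k-b_i)^2\le 2(a_i^Tx_k-\tilde b_i)^2+2r_i^2$ halves the contraction constant: you get $\frac{1}{4(2\gamma+\|A\|_2^2)}$ instead of $\frac{1}{2(2\gamma+\|A\|_2^2)}$, which you never address. The extra $\frac{\delta^2}{2(2\gamma+\|A\|_2^2)m}$ cannot be absorbed either, because the horizon $\frac{1}{4\gamma}\delta^2$ in the statement is exactly the Young term already spent in (\ref{eq5:6}).

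Your fallback points the right way but is not yet an argument. If you proceed as in (\ref{eq4.8}) and replace the model at $(\alpha_k,w_k)$ by its infimum, you get $-\frac{1}{2c}\|P_{V_k}\tilde g\|^2$, with $c=2\gamma+\|A\|_2^2$, $\tilde g=\nabla\tilde\Psi(y_k)$ and $V_k=\langle\{e_{i_k},v_k^*\}\rangle$. That quantity sees only the noisy residual.

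The missing idea is to skip Young's inequality on $\langle r,y_{k+1}-y_k\rangle$ altogether. In branch (\ref{eq4.9}) the actual step is $y_{k+1}-y_k=-\frac1c P_{V_k}\tilde g$; insert it into the $\|A\|_2^2$-smooth descent inequality for $\Psi$ itself, using $\nabla\Psi(y_k)=\tilde g+r$. Write $P_{V_k}\tilde g=\tilde g_{i_k}e_{i_k}+t\,u$, where $u\in V_k$ is a unit vector orthogonal to $e_{i_k}$; then $t^2$ is exactly the momentum term in the statement. Set $s=\langle r,u\rangle$. Using $2c-\|A\|_2^2=c+2\gamma$, one gets
\[
\Psi(y_{k+1})-\Psi(y_k)\le-\Big(\frac{1}{2c}+\frac{\gamma}{c^2}\Big)\big(\tilde g_{i_k}^2+t^2\big)-\frac{1}{c}\,\tilde g_{i_k}r_{i_k}-\frac{1}{c}\,ts .
\]
Now subtract the target $-\frac{1}{2c}(\tilde g_{i_k}+r_{i_k})^2-\frac{1}{2c}t^2$:
\begin{enumerate}
\item[(i)] the $\tilde g_{i_k}r_{i_k}$ terms cancel exactly;
\item[(ii)] $-\frac{\gamma}{c^2}t^2-\frac1c ts\le\frac{s^2}{4\gamma}$;
\item[(iii)] the remainder is at most $\frac{s^2}{4\gamma}+\frac{r_{i_k}^2}{2c}\le\frac{\|r\|^2}{4\gamma}\le\frac{\delta^2}{4\gamma}$, since $c\ge 2\gamma$ and $e_{i_k}\perp u$.
\end{enumerate}
This is (\ref{eq4:7}) with the exact residual $a_{i_k}^Tx_k-b_{i_k}=\tilde g_{i_k}+r_{i_k}$ in place of the noisy one. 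Uniform sampling over all $m$ rows and Lemma~\ref{lemma3:2} for $A$ then give the statement.

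Two further points. First, the constant $\tilde\sigma^2_{\min}(A)/m$ requires the RaSK-MM sampler, uniform over all $m$ rows, so that $\mathbb{E}_k(a_{i_k}^Tx_k-b_{i_k})^2=\frac1m\|Ax_k-b\|^2$. The hypothesis $0<\beta<q\le1-\beta$ is a leftover that plays no role; it is not a sampling rule to keep. With a quantile sampler and $q<1$, the first term of (\ref{eq2:2}) controls only $\|A_{B\backslash S}x_k-b_{B\backslash S}\|^2$, and Lemma~\ref{lemma3:2} for the full $A$ does not apply to it. If $S$ is taken to be all rows of $B$ with $r_i\neq0$, that term is empty anyway.

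Second, your check of the dependent branch is fine for (\ref{eq4:7}) when $v_k^*\neq0$, because (\ref{eq5:3}) then minimizes the model over a line. When $v_k^*=0$, which always happens at $k=0$, the algorithm takes $\alpha_k=1$, $w_k=0$ rather than the model minimizer $\alpha=1/c$, so nothing follows from minimality. Using the coordinate constant $L_{i_k}=1$ instead gives the needed exact-residual bound only for $\gamma\le\frac12$. The claim ``for any $\gamma>0$'' actually fails there: with $A=I_m$, $\lambda=0$, $\|r\|=\delta$ and $\hat x\to0$, the inequality is violated at $k=0$ once $\gamma>m/2$. The paper's argument has the same defect.
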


\begin{remark}
	\begin{enumerate}[(a)]
		\item In noisy case, we have $q=1$ and $\beta=0$, then the singular value $\tilde{\sigma}_{q-\beta,\min}^2(A)$ in Theorem \ref{th4.3} reduces to $\tilde{\sigma}_{\min}^2(A)$ in Theorem \ref{th4.5}. If the matrix $A$ has full column rank, we have $\tilde{\sigma}_{\min}^2(A)=\sigma_{\min}^2(A)$.
		\item When $2\gamma+\|A\|_2^2>1$, the convergence rate of RaSK-MM is slower than that of RaSK presented in \cite{schopfer2019linear} while the convergence horizon of RaSK-MM can be smaller than that of RaSK by selecting an appropriate parameter $\gamma$.
		\item When $\lambda=0$, 
		RaSK-MM reduces to the randomized Kaczmarz with minimal dual function (RK-MM) method; correspondingly, Quantile-RaSK-MM reduces to Quantile-RK-MM. The improvement of Quantile-RK-MM is the ability to go beyond the exact measurement restriction in \cite{zeng2024adaptiveh,lorenz2023minimal}. 
	\end{enumerate}
\end{remark}



\subsubsection{Extensions to exact linear systems}
\label{sec4.1}

The RaSK-MM with $\gamma=0$ can be employed to solve sparse solutions of the linear system $Ax=b$ with exact right-hand vector $b$.
We turn to study the convergence result of RaSK-MM for solving exact linear systems, which can be induced directly from Theorem \ref{th4.5}.


\begin{theorem}
	\label{th3:4}
	Assume that the exact right-hand side $b$ is given in (\ref{eq1.2}).
	Let $0<\beta<q\leq 1-\beta$ and the matrix $A\in\mathbb{R}^{m\times n}$ has unit-norm rows. 
	Let $y_k,x_k,x_k^*=A^Ty_k\in\partial f(x_k)\cap \mathcal{R}(A)$ be the iterates of RaSK-MM with $\gamma=0$, then $x_k$ converges linearly in expectation to the unique solution $\hat{x}$ of the regularized Basis Pursuit problem (\ref{eq1.2}). Let  
	$$\tilde{y}_k=\nabla \tilde{\Psi}(y_k)-e_{i_k}\nabla_{i_k} \tilde{\Psi}(y_k),$$ 
	then it holds that
	$$
	\begin{aligned}
	\mathbb{E}D_f^{x_{k+1}^*}(x_{k+1},\hat{x})
	&	\leq 
	\left(
	1-\frac{1}{\|A\|_2^2}\cdot\frac{\tilde{\sigma}^2_{\min }(A)}{2m}
	\cdot\frac{|\hat{x}|_{\min }}{|\hat{x}|_{\min }+2\lambda}
	\right)\mathbb{E}D_f^{x_k^*}(x_k,\hat{x})
\\
	&~~~~~~~~-\frac{1}{2\|A\|_2^2}\mathbb{E}\left(\frac{\langle v_k^*,e_{i_k}\nabla_{i_k} \Psi(y_k)\rangle^2
		\cdot
		\langle \tilde{y}_k,v_k^*\rangle^2}{\|v_k^*\|^2\cdot(\|v_k^*\|^2
		\|e_{i_k}\nabla_{i_k} \Psi(y_k)\|^2
		-\langle v_k^*,e_{i_k}\nabla_{i_k} \Psi(y_k)\rangle^2)}
	+\frac{\langle\tilde{y}_k,v_k^*\rangle^2}{\|v_k^*\|^2}
	\right).
	\end{aligned}
	$$
\end{theorem}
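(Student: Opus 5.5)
The plan is to specialize the argument of Theorem \ref{th4.3} (via Theorem \ref{th4.5}) to $\delta=0$, $b^*=0$, $q=1$, $\beta=0$. We cannot simply set $\gamma=0$ in the final bound, since the horizon term $\frac{1}{4\gamma}\delta^2$ is then of the form $0/0$. Instead we redo the one-step estimate directly. With exact data we have $\tilde\Psi=\Psi$, so the term $-\langle b-\tilde b, v_{k+1}^*\rangle$ in (\ref{eq5:6}) vanishes identically. The Young-inequality step that introduced $\gamma$ is then unnecessary, and (\ref{eq5:5}) gives directly
$$\Psi(y_{k+1})\le \Psi(y_k)+\langle d,\nabla\Psi(y_k)\rangle+\tfrac{\|A\|_2^2}{2}\|d\|^2,\qquad d=-\alpha e_{i_k}\nabla_{i_k}\Psi(y_k)+w v_k^*.$$
With $\gamma=0$, $(\alpha_k,w_k)$ in (\ref{eq4.9}) (respectively (\ref{eq5:3})) is exactly the minimizer of this quadratic upper bound.

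Next we complete the square as in (\ref{eq4.8}). The minimized bound equals
$$\Psi(y_k)+\frac{1}{2\|A\|_2^2}\Bigl(\|(I-P_{\langle\{e_{i_k}\nabla_{i_k}\Psi(y_k),v_k^*\}\rangle})\nabla\Psi(y_k)\|^2-\|\nabla\Psi(y_k)\|^2\Bigr).$$
In the linearly independent case we apply Lemma \ref{lemma1} with $y=e_{i_k}\nabla_{i_k}\Psi(y_k)$ and $z=v_k^*$, using the identities (\ref{eq4.5})--(\ref{eq4.6}) with $\tilde y_k=\nabla\Psi(y_k)-e_{i_k}\nabla_{i_k}\Psi(y_k)$. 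Since $\|\nabla\Psi(y_k)\|^2-\|\tilde y_k\|^2=(\nabla_{i_k}\Psi(y_k))^2$, we obtain the pathwise decrease
$$\Psi(y_{k+1})\le\Psi(y_k)-\frac{1}{2\|A\|_2^2}(a_{i_k}^Tx_k-b_{i_k})^2-\frac{1}{2\|A\|_2^2}(\text{two momentum terms}).$$
We then convert to Bregman distances by Lemma \ref{lemma3:1}, which applies because $b\in\mathcal R(A)$ and $x_k^*=A^Ty_k$.

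For the expectation, uniform sampling over all $m$ rows gives $\mathbb E_k(a_{i_k}^Tx_k-b_{i_k})^2=\frac1m\|Ax_k-b\|^2$. Lemma \ref{lemma3:2} bounds this from below by $\frac{\tilde\sigma_{\min}^2(A)}{m}\frac{|\hat x|_{\min}}{|\hat x|_{\min}+2\lambda}D_f^{x_k^*}(x_k,\hat x)$. Taking full expectation then yields the stated inequality with contraction factor $1-\frac{1}{\|A\|_2^2}\frac{\tilde\sigma_{\min}^2(A)}{2m}\frac{|\hat x|_{\min}}{|\hat x|_{\min}+2\lambda}$.

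Linear convergence follows by dropping the nonpositive momentum terms; they are nonpositive by Cauchy--Schwarz, as in Remark \ref{remark3.3}(a). The factor lies in $[0,1)$ because $\tilde\sigma_{\min}^2(A)\le\|A\|_2^2$ and $m\ge1$, and iterating gives geometric decay of $\mathbb E D_f^{x_k^*}(x_k,\hat x)$.

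The main obstacle is the degenerate cases, where the momentum terms in the statement are undefined. These occur when $v_k^*=0$ (e.g. $k=0$) or when $v_k^*$ is parallel to $e_{i_k}\nabla_{i_k}\Psi(y_k)$. Here we interpret the fractions as $0$. The algorithm's choice $\alpha_k=1$ with $w_k$ from (\ref{eq5:3}) is not the minimizer of the $\gamma=0$ quadratic, whose optimal step is $1/\|A\|_2^2$. For $w_k=0$ and $\alpha_k=1$, the bound $\tfrac{\|A\|_2^2}{2}\|d\|^2$ gives decrease $(1-\|A\|_2^2/2)(\nabla_{i_k}\Psi)^2$, which may be negative. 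So in this case we should instead use the coordinate Descent Lemma with $L_{i_k}=1$: then $\alpha_k=1$ gives decrease $\frac12(\nabla_{i_k}\Psi(y_k))^2\ge\frac{1}{2\|A\|_2^2}(\nabla_{i_k}\Psi(y_k))^2$, since $\|A\|_2^2\ge1$ for unit-norm rows. In the parallel case with $v_k^*\ne0$, the span is one-dimensional, and the projection bound above with the one-dimensional minimizer still yields at least the same decrease. I would check that the formula (\ref{eq5:3}) realizes this minimizer, and otherwise again fall back to the Descent Lemma estimate.
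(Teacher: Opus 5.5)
Your proposal is correct and follows essentially the paper's argument: the paper only asserts that the result follows from Theorem \ref{th4.5}, i.e.\ from the proof of Theorem \ref{th4.3} with $\tilde b=b$ and $q=1$, while you redo that estimate without the Young step, which is the right way to make $\gamma=0$ legitimate, and your handling of the degenerate branches fills a gap the paper never addresses. The check you defer succeeds, so the non-optimality you worry about arises only in the branch $v_k^*=0$, $w_k=0$: if $v_k^*=\mu e_{i_k}\neq 0$, then (\ref{eq5:3}) with $\gamma=0$ gives $w_kv_k^*=(1-\|A\|_2^{-2})\nabla_{i_k}\Psi(y_k)e_{i_k}$, hence $y_{k+1}=y_k-\|A\|_2^{-2}\nabla_{i_k}\Psi(y_k)e_{i_k}$, which is exactly the one-dimensional minimizer and yields the decrease $\frac{1}{2\|A\|_2^2}(\nabla_{i_k}\Psi(y_k))^2$.
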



\begin{remark}
\label{remark4.3}
We compare RaSK-MM with existing methods in terms of computational complexity and convergence results. 
\begin{enumerate}[(a)]
\item Under the assumption that the measurement matrix is normalized by rows, the computational cost of variants of the sparse Kaczmarz method is summarized in Table \ref{table7}. 
RaSK-MM significantly more expensive than other methods in the overdeterminated case ($m>n$). Compared with BK-EM and BK-REM, RaSK-MM is able to solve noisy linear systems.
\begin{table}[H]
	\begin{center}
		\begin{minipage}{\textwidth}
			\caption{Summary of the computational costs for variants of sparse Kaczmarz method, where $m,n$ are the numbers of rows and columns of the measurement matrix, BK-EM and BK-REM are equipped with single row sketching}
			\label{table7}
			\begin{tabular*}{\textwidth}{@{\extracolsep{\fill}}lcc@{\extracolsep{\fill}}}
				\toprule
		Method  &  Computational cost & Rate bound are shown in
				\\	
				\midrule RaSK  & $9n$    &  Theorem 3.2 in \cite{schopfer2019linear}
				\\
			ERaSK & $19n+\mathcal{O}(nlog n)$  & Theorem 3.2 in \cite{schopfer2019linear} 
				\\
			BK-EM  & $24n+\mathcal{O}(nlog n)$  & Theorem 4.2 in \cite{lorenz2023minimal}
				\\
			BK-REM  & $17n$  &Theorem 4.5 in \cite{lorenz2023minimal} 
				\\
	RaSK-MM  & $4mn+7m+4n$  &  Section \ref{sec4.2} in this paper
				\\
				\bottomrule
			\end{tabular*}
		\end{minipage}
	\end{center}
\end{table}
	\item When $\|A\|_2^2>1$, the convergence rate of RaSK-MM with $\gamma=0$ is slower than that of RaSK and BK-REM. However, the convergence result of RaSK-MM has a decreasing term compared to RaSK, which may lead to better performance. Moreover, it follows from the inequality $\|A^Tv_k^*\|\leq \|A\|\|v_k^*\|$ that RaSK-MM has a smaller descent compared to BK-REM in \cite{lorenz2023minimal}. 
\end{enumerate} 
\end{remark}

\section{Numerical experiments}
\label{sec5}
\subsection{Experimental setup}
\label{sec5:1}

In this section, we evaluate Quantile-RaSK-MM and RaSK-MM on three types of coefficient matrices. 
The first type is the random Gaussian matrix $A\in\mathbb{R}^{m\times n}$ generated by using MATLAB function ‘randn’, where the elements are independent and identically distributed from the standard normal distribution $N(0,1)$. The second type is selected from the SuiteSparse Matrix Collection in \cite{davis2011university}. 
In these two cases, the sparse solution $\hat{x}\in\mathbb{R}^n$ is randomly generated by using MATLAB function 'randn' as well, with the nonzero locations chosen randomly by the sparsity $s$; the exact measurement vector $b\in\mathbb{R}^m$ is computed by $b=A\hat{x}$. 
The third type uses tomography matrices produced by the function \textsl{paralleltomo} in AIRTOOLS package in \cite{hansen2012air,hansen2018air}, where the measurement matrix $A$, the true solution $\hat{x}$ (ground image), and the exact measurement vector $b$ are generated with parameters $N\in\{20,30\},~theta=0:5:179,~p=150,~d=1.4N$.

The corrupted right-hand vector is the sum of $b$ and corruptions $b^c$ from the uniform distribution $U(-100,100)$, while the noisy right-hand vector is the sum of $b$ and noise $r$ from $N(0,1)$.
All implementations start from the initial vector $x_{0}=0\in \mathbb{R}^{n}$ and $y_{-1}=y_0=0\in\mathbb{R}^{m}$.
We define the relative error at $k$-th iterate as
$$\text{Error}(k)=\frac{\|x_k-\hat{x}\|}{\|\hat{x}\|}.$$
Unless specified otherwise, we record the median of residual error through 50 trials by default; the algorithms terminate once the maximum number of iterations $N$ is reached or the relative error is less than the accuracy $\varepsilon$ we set.

All experiments are performed with MATLAB (version R2021b) on a personal
computer with 2.80GHZ CPU(Intel(R) Core(TM) i7-1165G7), 16-GB memory, and
Windows operating system (Windows 10).

\subsection{Parameter tuning for $\beta$ and $q$}
\label{sec5.2}
 
Based on the inequality $\beta<q\leq1-\beta$, we begin by exploring the optimal choice of the quantile $q$ for a range of corrupted fractions $\beta$ for Quantile-RaSK-MM.  
The experiments are conducted on a random matrix generated by MATLAB function 'randn', and the measurement vector $\tilde{b}$ is contaminated by corruptions from the uniform distribution $U(-100,100)$.
In Figure \ref{fig1}, we set $\gamma = 0.01,\lambda = 1,s=10$ and $\beta=0.1:0.1:0.5,q=0.1:0.1:1$. For each $\beta$, we plot the relative error after 2000 iterations of Quantile-RaSK-MM, varying different values of $q$. 

Figure \ref{fig1} (a) presents the results for underdetermined matrices $A\in\mathbb{R}^{500\times 2000}$, while Figure \ref{fig1} (b) show the results for overdetermined matrices $A\in\mathbb{R}^{500\times 200}$. The findings in Figure \ref{fig1} indicate that, in both underdetermined and overdetermined cases, the optimal quantile $q$ can be taken close to $1-\beta$. This is consistent with observations reported in \cite{zhang2024quantile,cheng2022block,haddock2022quantile}. As discussed in Remark \ref{remark1}, the acceptable ratio $q$ can be taken as $1-\beta$ in our proposed Quantile-RaSK-MM, whereas the quantile $q$ must be strictly less than $1-\beta$ in the methods proposed by \cite{zhang2024quantile,cheng2022block,haddock2022quantile}. This flexibility in the choice of $q$ represents a significant advantage of our method.

\begin{figure}[H]
	\centering
	\subfigure[underdetermined system]{
		\includegraphics[width=0.3\linewidth,height=0.23\textwidth]{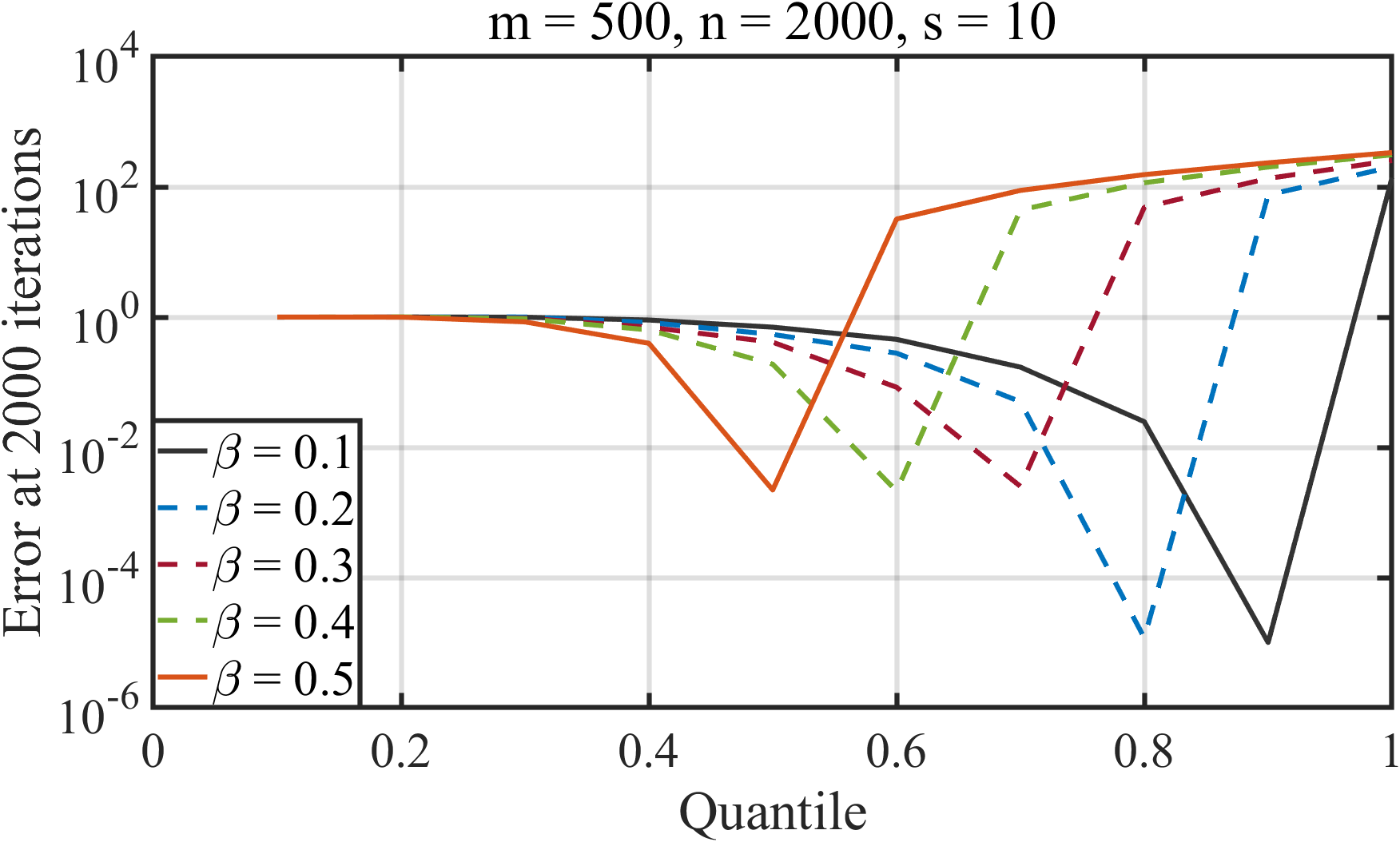}}
	\hspace{0.06\linewidth}
	\subfigure[overdetermined system]{\includegraphics[width=0.3\linewidth,height=0.23\textwidth]{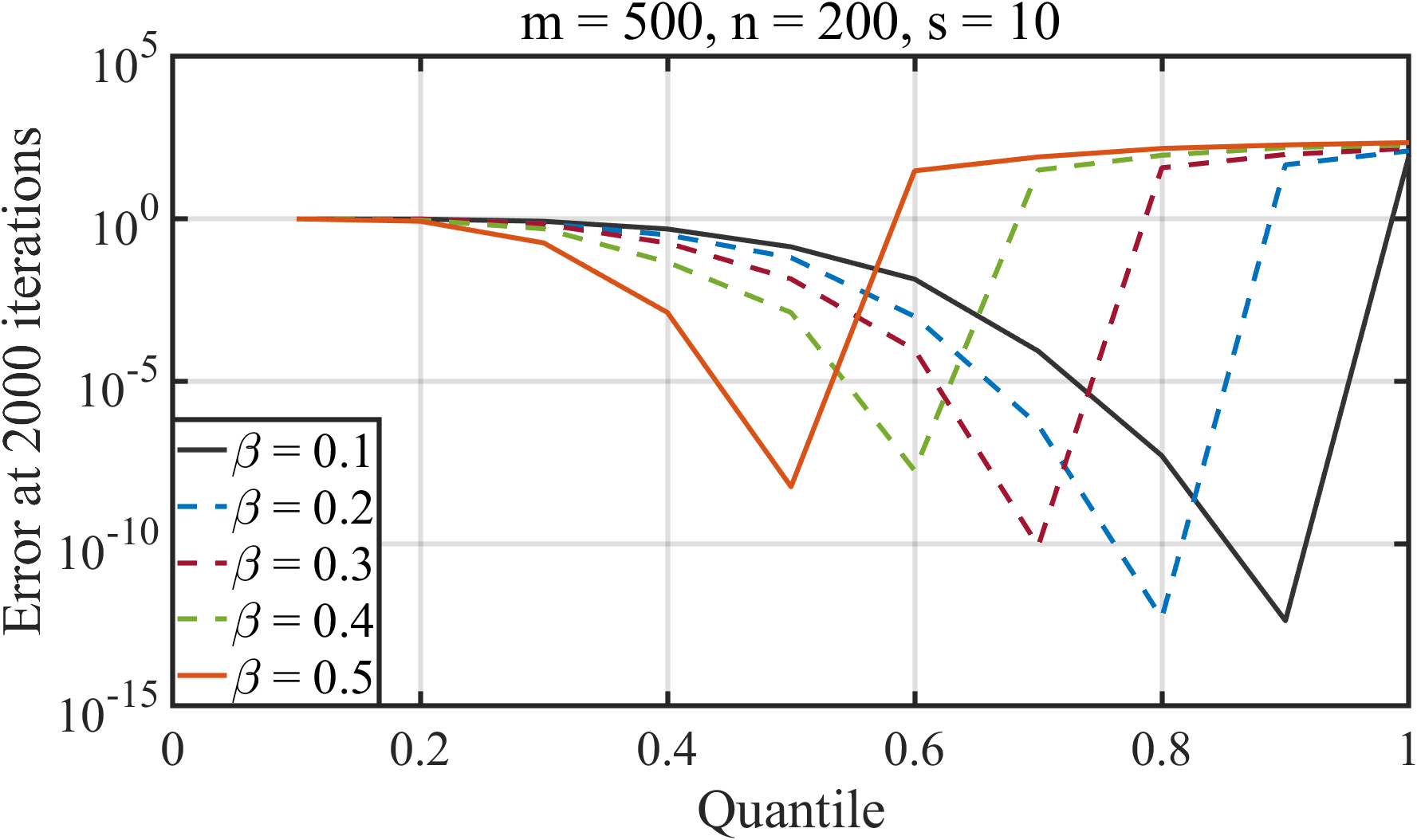}}
	\caption{Relative error after 2000 iterations of Quantile-RaSK-MM for a range of $\beta$ and $q$}
	\label{fig1}
\end{figure}

\subsection{The simulated data}

\label{sec5.1}
In this subsection, we investigate the performance of RaSK-MM and Quantile-RaSK-MM on linear systems with randomly generated Gaussian matrices.

\subsubsection{The noisy linear systems}

In this simulation, the noisy linear systems are constructed with Gaussian matrices $A\in\mathbb{R}^{m\times n}$ and the vector $b$ is damaged by $0.1\%$ relative noise.
We compare the number of iterations (IT) and computing time (CPU) of RaSK, ERaSK, and RaSK-MM with $\gamma$ chosen from the set $\{0.01,0.1\}$.
The iteration is terminated once the relative error is less than $\varepsilon=10^{-2}$ or the number of iterations exceeds $N=20000$. 
Table \ref{table6} shows the median of different methods through 50 trials, both with $s=10$ and $\lambda=1$. 
In the underdetermined case, RaSK-MM achieves given accuracy in the least time, followed by RaSK; in the overdetermined case, RaSK is fastest to reach the given accuracy. The more highly underdetermined the linear system, the better the RaSK-MM performs, which is consistent with Remark \ref{remark4.3}.

\begin{table}[H]
	\begin{center}
		\begin{minipage}{\textwidth}
			\caption{IT and CPU of different methods for random matrices when Error$<10^{-2}$ in noisy case}
			\label{table6}
			\begin{tabular*}{\textwidth}{@{\extracolsep{\fill}}lcccccc@{\extracolsep{\fill}}}
				\toprule
				\multicolumn{2}{c}{ $m\times n$ }  
				&  $200\times 1000$ & $200\times 500$ & $200\times 200$ &$500\times 200$ &$1000\times 200$
				\\	
				\midrule
				\multirow{2}{*}{ RaSK \cite{schopfer2019linear}} & IT     &   16566   & 8090 & 2452 &1857 & 2027
				\\
				\cline{2-7}
				& CPU    & 0.0559 & 0.0147  &  \textbf{0.0033} &\textbf{0.0062} &\textbf{0.0075}
				\\
				\midrule
				\multirow{2}{*}{ ERaSK \cite{schopfer2019linear} } & IT  &  1317  &  908   &  544 & 485 & 437
				\\
				\cline{2-7}
				& CPU &   0.0846  & 0.0323 &  0.0112 & 0.0272 & 0.0274
				\\
				\midrule
				\multirow{2}{*}{ RaSK-MM with $\gamma=0.01$ } & IT   &  1406  & 868 &  603 &552 & 595
				\\
				\cline{2-7}
				& CPU   &  \textbf{0.0323}  & \textbf{0.0105}  &  0.0047 &0.0175 & 0.0413
				\\
				\midrule
				\multirow{2}{*}{ RaSK-MM with $\gamma=0.1$ } & IT   &  1376  & 895 &  586 &554 & 641
				\\
				\cline{2-7}
				& CPU & 0.0356  & 0.0126 & 0.0052 &0.0183 & 0.0479
				\\
				\bottomrule
			\end{tabular*}
		\end{minipage}
	\end{center}
\end{table}


\subsubsection{The corrupted linear systems}


We evaluate the performance of Quantile-RaSK-MM on corrupted linear systems with Gaussian matrices. 
Let $\lambda=1,s=10$. In Quantile-RaSK and Quantile-ERaSK, as recommended in \cite{zhang2024quantile,cheng2022block,haddock2022quantile}, we let $q=0.7,\beta=0.2$; in Quantile-RaSK-MM, we let $q=0.8,\beta=0.2$ as shown in Section \ref{sec5.2}.
If the algorithm fails to converge to the accuracy $10^{-6}$ within $20000$ iterations, it is denoted as '-'. 

Table \ref{table3} and \ref{table4} list the median of IT and CPU for underdetermined ($m<n$) and overdetermined ($m>n$) linear systems through 50 trials, respectively.
We observe that, for underdetermined systems, the Quantile-RaSK-MM method with $\gamma=0.01$ attains the best performance in terms of both number of iterations and computing time, whereas Quantile–RaSK and Quantile–ERaSK fail to converge in highly underdetermined cases.
For highly overdetermined systems, Quantile-ERaSK reaches the given accuracy faster than Quantile-RaSK-MM.

In summary, the Quantile-RaSK-MM method demonstrates robust and efficient performance in solving corrupted linear systems, particularly in underdetermined scenarios.


 \begin{table}[H]
 	\begin{center}
 		\begin{minipage}{\textwidth}
 			\caption{IT and CPU of different methods for random underdetermined matrices when Error < $10^{-6}$ in corrupted case}
 			\label{table3}
 			\begin{tabular*}{\textwidth}{@{\extracolsep{\fill}}lccccc@{\extracolsep{\fill}}}
 				\toprule
 				\multicolumn{2}{c}{ $m\times n$ }  
 				& $500\times 1000$   & $500\times 2000$ & $500\times 3000$ & $500\times 4000$	
 				\\	
 				\midrule
 				\multirow{2}{*}{ Quantile-RaSK \cite{zhang2024quantile} } & IT &   -   &   -  &  - &-
 				\\
 				\cline{2-6}
 				& CPU &   -     &  -   & - &-
 				\\
 				\midrule
 				\multirow{2}{*}{ Quantile-ERaSK \cite{zhang2024quantile} } & IT &  12252   &  -  &  -  &-
 				\\
 				\cline{2-6}
 				& CPU &   2.0730  &  -   &  - &-
 				\\
 				\midrule
 				\multirow{2}{*}{ Quantile-RaSK-MM, $\gamma=0.01$ } & IT &  1968    &  2263   & 2672  & 3318
 				\\
 				\cline{2-6}
 				& CPU &  \textbf{0.2414}  &  \textbf{0.4948} & \textbf{0.9968}  & \textbf{2.4772}
 				\\
 				\midrule
 				\multirow{2}{*}{Quantile-RaSK-MM, $\gamma=0.1$}  & IT &  1979    &  2390  &  2769 & 3408
 				\\
 				\cline{2-6}
 				& CPU &  0.2428 &0.5250  &  1.0326 & 2.5422
 				\\
 				\bottomrule
 			\end{tabular*}
 		\end{minipage}
 	\end{center}
 \end{table}
\begin{table}[H]
	\begin{center}
		\begin{minipage}{\textwidth}
			\caption{IT and CPU of different methods for random overdetermined matrices when Error < $10^{-6}$ in corrupted case}
			\label{table4}
			\begin{tabular*}{\textwidth}{@{\extracolsep{\fill}}lccccc@{\extracolsep{\fill}}}
				\toprule
				\multicolumn{2}{c}{ $m\times n$ }  
				& $500\times 200$   &$1000\times 200$ &$2000\times 200$ &$4000\times 200$	
				\\	
				\midrule
				\multirow{2}{*}{ Quantile-RaSK \cite{zhang2024quantile} } & IT & 11616      &9543   & 7932 & 7880
				\\
				\cline{2-6}
				& CPU &  0.4860    &1.7354 & 2.6452 &5.9208
				\\
				\midrule
				\multirow{2}{*}{ Quantile-ERaSK \cite{zhang2024quantile} } & IT &  1666   &  1286  &  1210  & 1218
				\\
				\cline{2-6}
				& CPU &   0.1164   &   0.3548  & \textbf{0.5567} & \textbf{1.0602}
				\\
				\midrule
				\multirow{2}{*}{ Quantile-RaSK-MM, $\gamma=0.01$ } & IT &  1285    &  1363   &  1656 & 2031
				\\
				\cline{2-6}
				& CPU &   \textbf{0.0635}  &   \textbf{ 0.3055}  & 0.7966 & 2.0270
				\\
				\midrule
				\multirow{2}{*}{Quantile-RaSK-MM, $\gamma=0.1$}  & IT &    1319   & 1382  &  1569 &  2139
				\\
				\cline{2-6}
				& CPU &  0.0654  &0.3099 & 0.7536 &2.1382
				\\
				\bottomrule
			\end{tabular*}
		\end{minipage}
	\end{center}
\end{table}



\subsubsection{The stopping rule}
\label{sec5.3.3}

Here we compare the numerical performance of the two stopping criteria proposed in Section \ref{sec3.3}, namely ME and DP. 
We record $D_f(x_k,\hat{x})$ instead of $\|x_k-\hat{x}\|$ in each iterate to align with the motivation behind the ME rule, aiming to ensure monotone decrease of the error measured by Bregman distance.
The noisy linear systems are constructed with Gaussian matrices $A\in\mathbb{R}^{m\times n}$ and the right-hand side vector $b$ is contaminated by $5\%$ relative noise.
The corrupted linear systems are also constructed with Gaussian matrices $A\in\mathbb{R}^{m\times n}$ and $b$ is damaged by corruptions with a ratio $\beta=0.2$.
Let $q=0.8,s=10,\lambda=1,\gamma=0.01$.

Figure \ref{fig14} records the median of Bregman distance $\{D_f^{x_k^*}(x_k,\hat{x})\}$ over 50 independent trials; vertical markers indicate the average stopping iterations for DP and ME, respectively.
We find that both ME and DP succeed in terminating the RaSK-MM method effectively for noisy systems. However, only the ME rule yields a valid stopping behavior for the Quantile-RaSK-MM method in the corrupted case, since the residual sequence 
$\{\|Ax_k-\tilde{b}\|\}_k$  becomes too large to serve as a reliable stopping rule in corrupted scenarios.
In summary, as a stopping rule for Quantile-RaSK-MM, the ME rule is suitable for both noisy and corrupted linear systems, whereas DP is applicable only to noisy systems.

\begin{figure}[H]
	\centering
	\subfigure[Error in noisy case]{
		\includegraphics[width=0.23\linewidth,height=0.2\textwidth]{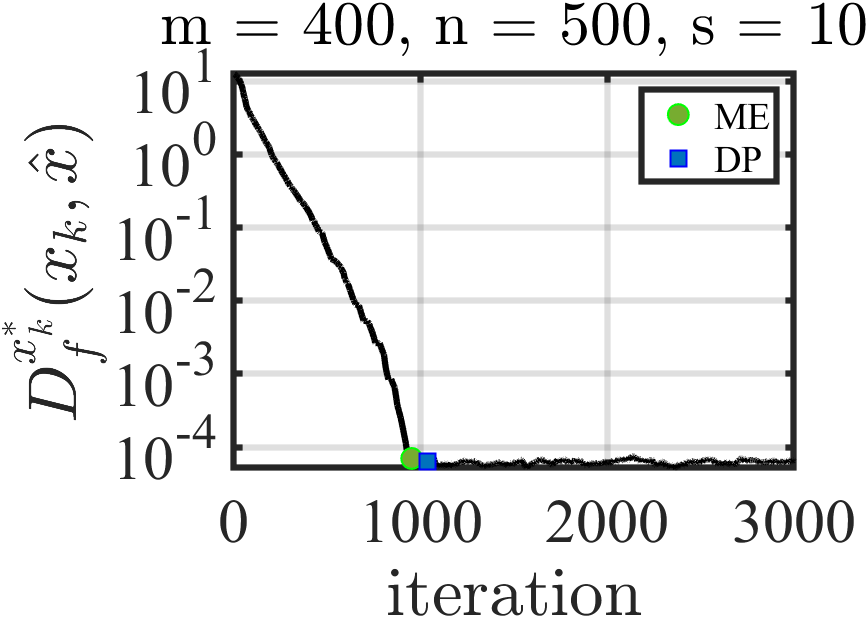}}
	\subfigure[Error in corrupted case]{\includegraphics[width=0.23\linewidth,height=0.2\textwidth]{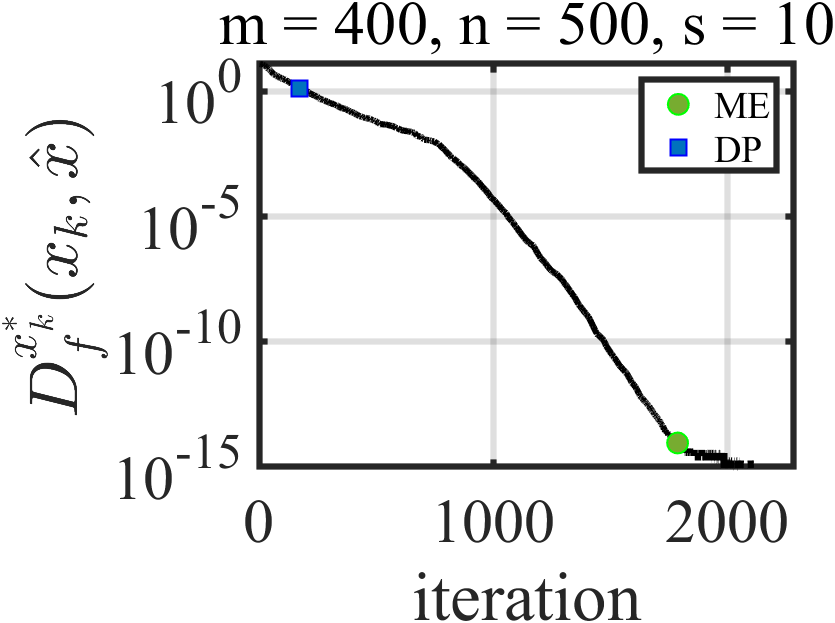}}
	\subfigure[Error in noisy case]{
		\includegraphics[width=0.23\linewidth,height=0.2\textwidth]{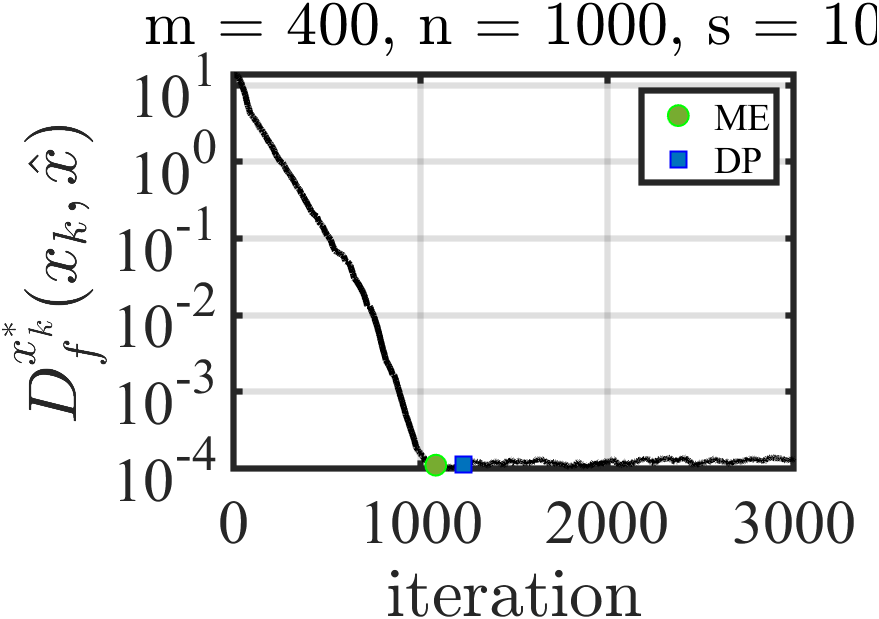}}
	\subfigure[Error in corrupted case]{\includegraphics[width=0.23\linewidth,height=0.2\textwidth]{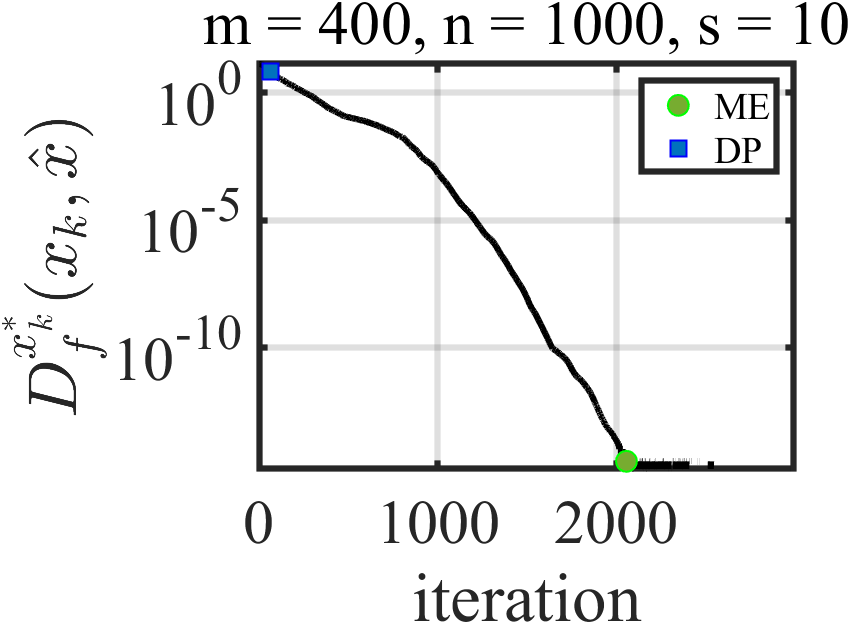}}
	\caption{The performance of the minimal error (ME) rule and the discrepancy principle (DP) on Quantile-RaSK-MM for solving noisy or corrupted systems}
	\label{fig14}
\end{figure}

\subsection{The SuiteSparse Collection data}
\label{sub5.2}

In this subsection, we test Quantile-RaSK-MM and RaSK-MM on real-world matrices selected from the SuiteSparse Matrix Collection in \cite{davis2011university}.

\subsubsection{The exact linear systems}

\label{sec5.2.1}

The matrices used in this experiment are 'ash219', 'Trefethen\_300', 'Trefethen\_700', and 'flower\_5\_1'. For each matrix, we construct exact linear systems and generate the $s$-sparsity truth solution $\hat{x}$ with $s=30,s=30,s=70,s=20$, respectively, and set $b=A\hat{x}$.
We evaluate BK-EM and BK-REM as described in \cite{lorenz2023minimal}, RaSK and ERaSK as detailed in \cite{schopfer2019linear}, RaSK-MM with $\gamma=0$ on these systems. The regularization parameter is set to $\lambda=1$.

Table \ref{table1} records the median over 50 trials, with termination at $\varepsilon=10^{-6}$ or $N=20000$.
As discussed in Remark \ref{remark4.3}, although BK-REM theoretically has a larger descent than RaSK-MM, our results show that RaSK-MM outperforms BK-REM on each real matrix used in this test. 
As an accelerated algorithm equipped with adaptive randomized heavy ball momentum, both BK-EM and RaSK-MM have the ability to improve the performance of RaSK.
Thus, even compared to BK-EM and BK-REM, RaSK-MM is a feasible and effective method to recover the sparse solution from exact linear systems.

\begin{table}[H]
	\begin{center}
		\begin{minipage}{\textwidth}
			\caption{IT and CPU of different methods for real matrices when Error < $10^{-6}$ in the exact case}
			\label{table1}
			\begin{tabular*}{\textwidth}{@{\extracolsep{\fill}}lccccc@{\extracolsep{\fill}}}
				\toprule
				\multicolumn{2}{c}{ Name } & ash219  &  Trefethen\_300 & Trefethen\_700& flower\_5\_1 \\
				\midrule
				\multicolumn{2}{c}{ $m\times n$ }  
				  & $219\times 85$   & $300\times 300$ & $700\times 700$& $211\times 201$	
				\\	
				\multicolumn{2}{c}{ density }  &  2.3529\%
			   & 5.1977\% & 2.5824\% & 1.42\%
				\\
				\multicolumn{2}{c}{cond($A$)} & 3.0249
			  & 1772.69 & 4710.40 & 2.000742e+16
				\\
				\multicolumn{2}{c}{rank($A$)} & 85     & 300 & 700 &179
				\\
				\midrule
		\multirow{2}{*}{ BK-EM\cite{lorenz2023minimal} } & IT & 1777    &  4626  & 12380  & 10173
		\\
		\cline{2-6}
		& CPU & 0.0132      &  0.1403  & 0.2567 &\textbf{0.0811}
		\\
		\midrule
		 \multirow{2}{*}{ BK-REM\cite{lorenz2023minimal} } & IT & 4224    &  11842  &   -
		 &-\\
		 \cline{2-6}
		 & CPU & 0.0258       &  0.2148
		  & - 
		  &-
		 \\
		 \midrule
		 \multirow{2}{*}{ RaSK\cite{schopfer2019linear} } & IT & 4455    &  13493  &   -&-
		 \\
		 \cline{2-6}
		 & CPU & 0.0193  &  0.1212
		   & -  
		   &-
		 \\
		 \midrule
		 \multirow{2}{*}{ ERaSK\cite{schopfer2019linear} } & IT & 2246    &  4024  &  10277 & 15491
		 \\
		 \cline{2-6}
		 & CPU & 0.0281       &  0.1617  & 0.4008 & 0.1767
		 \\
		 \midrule
		 \multirow{2}{*}{ RaSK-MM } & IT & 2792    &  4030 &   9826 & 17861
		 \\
		 \cline{2-6}
		 & CPU & \textbf{0.0108}      &  \textbf{0.0537}  & \textbf{0.1961} &0.0880
		 \\
				\bottomrule
			\end{tabular*}
		\end{minipage}
	\end{center}
\end{table}


\subsubsection{The corrupted linear systems}

Now we verify the performance of Quantile-RaSK-MM dealing with corrupted linear systems constructed by matrices from the SuiteSparse Matrix Collection. The sparsity is set to $s=30$ for 'ash219', 'ash608', and 'ash958'; the sparsity is set to $s=20$ for 'Franz1'. Let $\beta=0.2$ and $\lambda=1$. The candidates of $\gamma$ are chosen from $\{0.01,0.1\}$.
As discussed in Remark \ref{remark1}, the acceptable ratio $q$ in Quantile-RaSK and Quantile-ERaSK is set to $q=0.799$, and in Quantile-RaSK-MM it is set to $q=0.8$. 
We summarize the median of 50 trials of CPU and IT in Table \ref{table2} once the relative error reaches $\varepsilon=10^{-6}$ or the maximum number of iterations reaches $N=20000$. Selected numerical results are visualized in Figures \ref{fig9}.

We can find that Quantile-RaSK-MM has advantages over other algorithms, which requires the fewest iterations and computing time.
Although the stepsize parameter and momentum parameter in Quantile-RaSK-MM demand a larger computing cost at each iteration, their overall iteration can overcome this drawback and lead to a lower computing time than Quantile-RaSK and Quantile-ERaSK.

\begin{table}[H]
	\begin{center}
		\begin{minipage}{\textwidth}
			\caption{IT and CPU of different methods for real matrices when Error < $10^{-6}$ in corrupted case}
			\label{table2}
			\begin{tabular*}{\textwidth}{@{\extracolsep{\fill}}lccccc@{\extracolsep{\fill}}}
				\toprule
				\multicolumn{2}{c}{ Name } & ash219 & ash608  &  ash958 &Franz1 \\
				\midrule
				\multicolumn{2}{c}{ $m\times n$ }  & $219\times 85$ 
				& $608\times 188$   & $958\times 292$ 	& $2240\times 768$
				\\	
				\multicolumn{2}{c}{ density }  &  2.3529\%
				& 1.0638\%   & 0.6849\% & 0.3\% 
				\\
				\multicolumn{2}{c}{cond($A$)} & 3.0249
				&  3.373   & 3.2014 & 2.742609e+15
				\\
				\multicolumn{2}{c}{rank($A$)} & 85  & 188   & 292 & 755
				\\
				\midrule
				\multirow{2}{*}{ Quantile-RaSK\cite{zhang2024quantile} } & IT &10756 & - & -  &-   
				\\
				\cline{2-6}
				& CPU & 0.7984 &  -   & -   &-
				\\
				\midrule
				\multirow{2}{*}{ Quantile-ERaSK\cite{zhang2024quantile} } & IT & 6090 &  - &  - &-   
				\\
				\cline{2-6}
				& CPU & 0.5953 & -  &  -&-
				\\
				\midrule
				\multirow{2}{*}{ Quantile-RaSK-MM, $\gamma=0.01$ } & IT &  4500 & 5963 &   14282 & 16191  
				\\
				\cline{2-6}
				& CPU & \textbf{0.2687} &  \textbf{0.2979} & 0.4287 & 0.9393 
				\\
				\midrule
				\multirow{2}{*}{ Quantile-RaSK-MM, $\gamma=0.1$ } & IT & 6005 & 6070  &  12968  & 15837  
				\\
				\cline{2-6}
				& CPU & 0.3495 &  0.3021   &  \textbf{0.3895} & \textbf{0.9195}
				\\
				\bottomrule
			\end{tabular*}
		\end{minipage}
	\end{center}
\end{table}

\begin{figure}[H]
	\centering
	\subfigure[ash219]{
		\includegraphics[width=0.23\linewidth,height=0.2\textwidth]{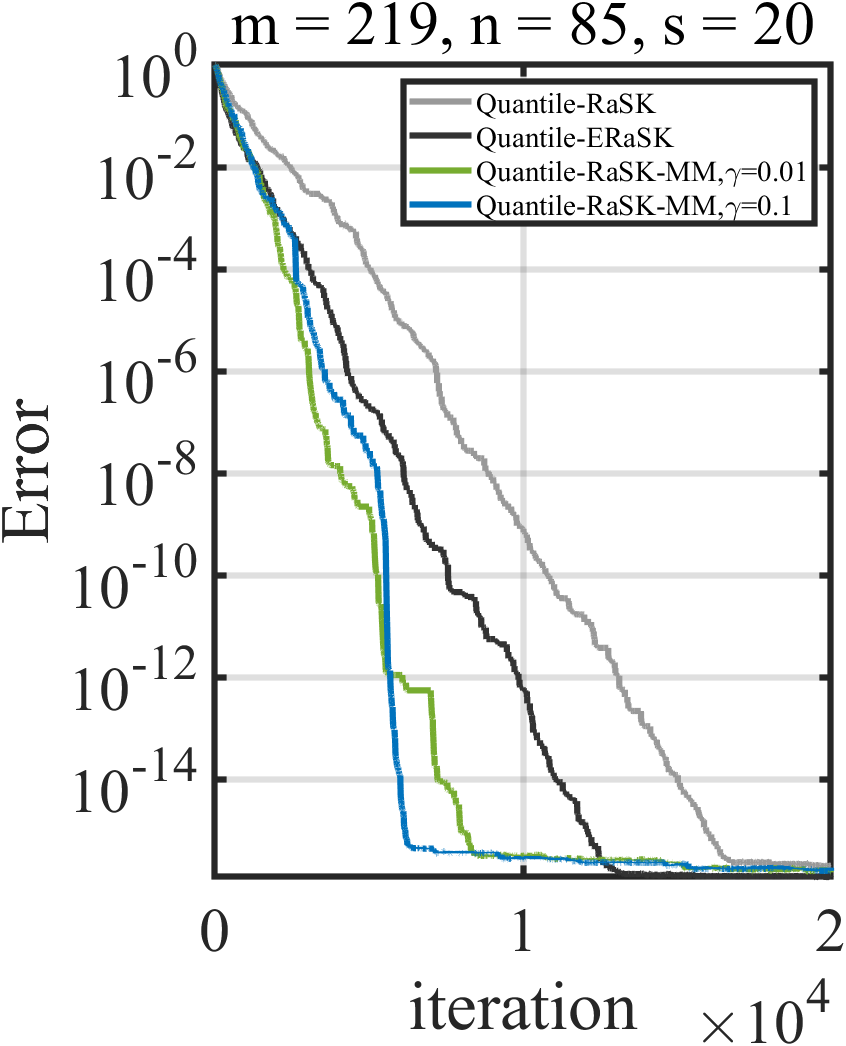}
	\includegraphics[width=0.23\linewidth,height=0.2\textwidth]{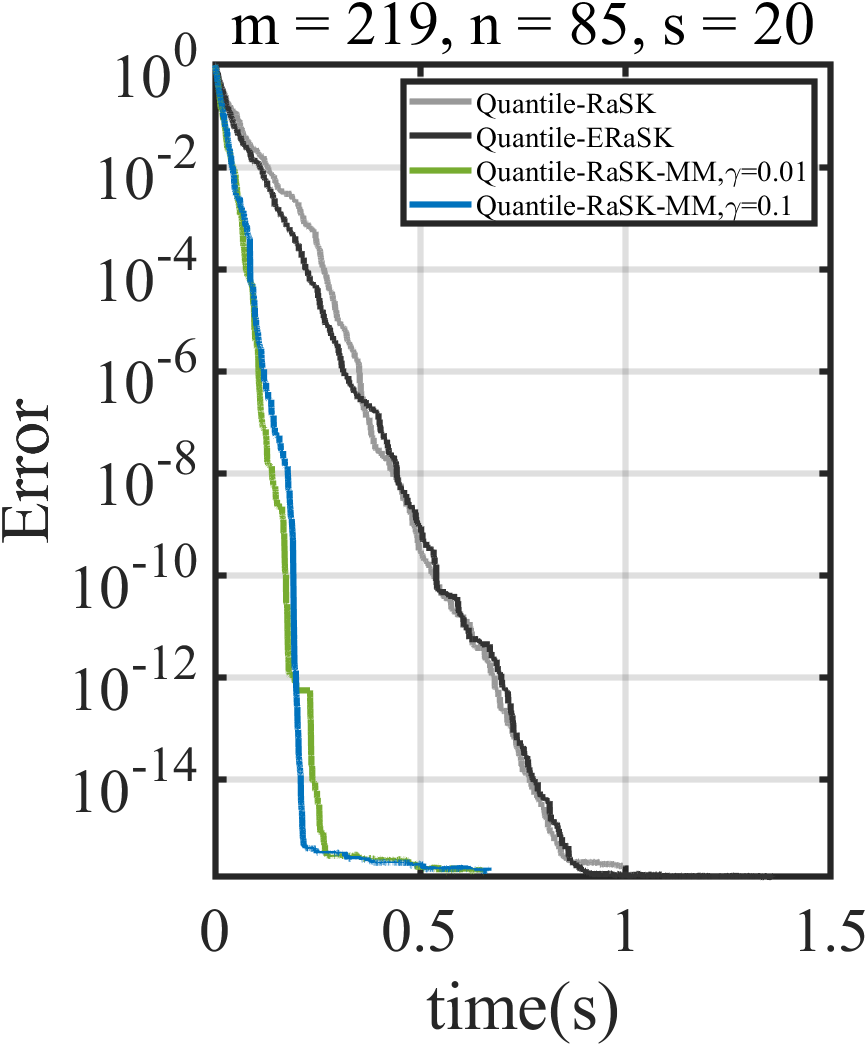}
}
\subfigure[Franz1]{
	\includegraphics[width=0.23\linewidth,height=0.2\textwidth]{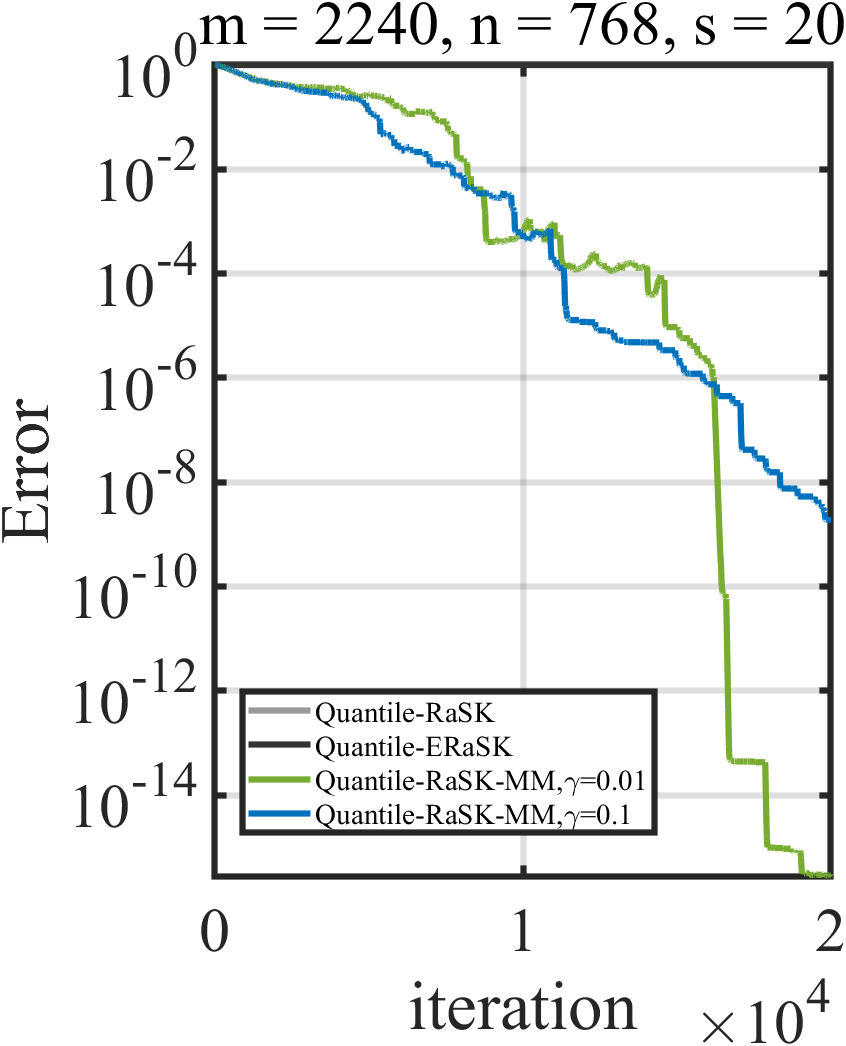}
	\includegraphics[width=0.23\linewidth,height=0.2\textwidth]{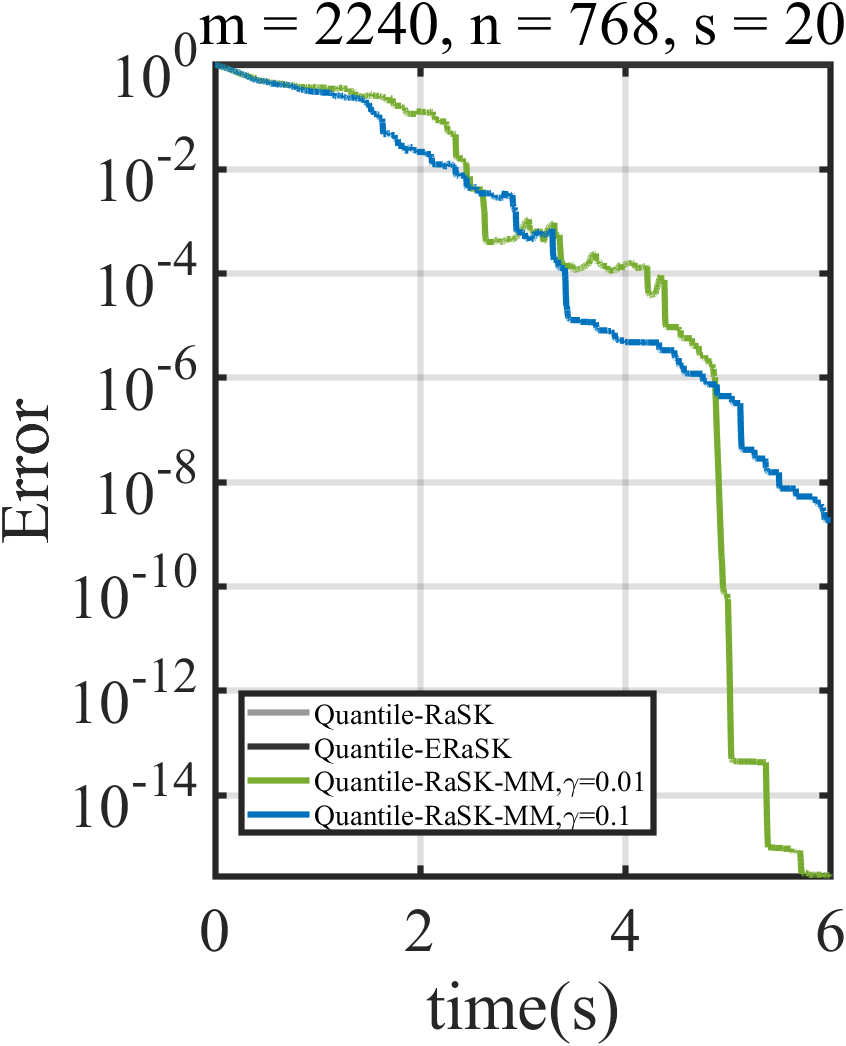}	
}	
	\caption{The performance of different methods on real matrices in corrupted case}
	\label{fig9}
\end{figure}

\subsection{Phantom picture}
\label{sec5.5}

In this subsection, we test the performance of Quantile-RaSK-MM for reconstructing ground-truth images from linear systems arising in computed tomography. Following the parameter setting in Section \ref{sec5:1}, the exact matrix and exact vector are generated by the function \textsl{paralleltomo(N,theta,p,d)} from the AIRTOOLS package in \cite{hansen2012air,hansen2018air}. The reconstruction results for $N=20$ and $N=30$ are summarized in Figure \ref{fig15}.
Both corrupted systems and noisy systems are constructed as described in Section \ref{sec5.1}.
Only the contaminated vector $\tilde{b}$ is available instead of the exact vector $b$, and the task is to reconstruct the ground truth $\hat{x}$ from the inexact system $Ax\approx \tilde{b}$.

Let $\beta=0.2,q=0.8$, and $\gamma=0.01$.
We employ the ME rule as the stopping criterion and output the final iterate. The output is reshaped into a matrix using the function \textsl{reshape}. We evaluate the numerical performance of Quantile-RaSK-MM by the peak signal-to-noise ratio (PSNR) of the recovered images. As shown in Figure \ref{fig15}, Quantile-RaSK-MM equipped with the proposed ME stopping rule yields accurate reconstructions for both corrupted and noisy systems.


\begin{figure}[H]
	\centering
	\subfigure[$N=20$]{
		\includegraphics[width=0.12\linewidth,height=0.16\textwidth,keepaspectratio]{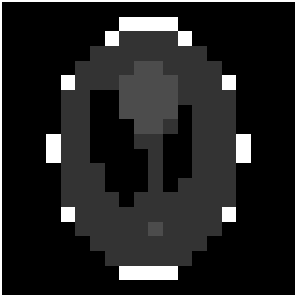}
		\hspace{0.01\linewidth}
		\includegraphics[width=0.15\linewidth,height=0.16\textwidth,keepaspectratio]{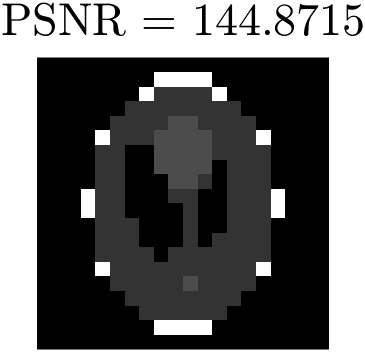}
		\hspace{0.01\linewidth}
		\includegraphics[width=0.14\linewidth,height=0.16\textwidth,keepaspectratio]{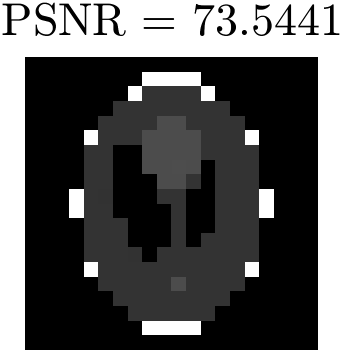}
	}
\hspace{0.05\linewidth}
	\subfigure[$N=30$]{
		\includegraphics[width=0.12\linewidth,height=0.16\textwidth,keepaspectratio]{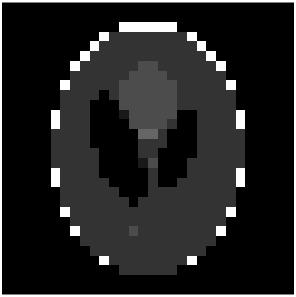}
		\hspace{0.01\linewidth}
		\includegraphics[width=0.14\linewidth,height=0.16\textwidth,keepaspectratio]{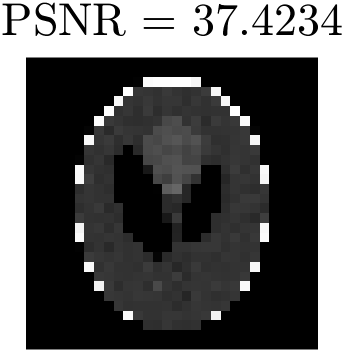}
		\hspace{0.01\linewidth}
		\includegraphics[width=0.14\linewidth,height=0.16\textwidth,keepaspectratio]{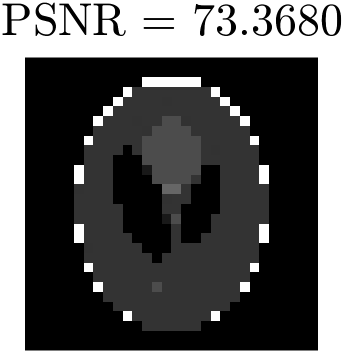}
	}
	\caption{CT reconstructions by Quantile-RaSK-MM with the ME stopping rule. Rows (left-to-right): ground truth for $N=20$, reconstructions from corrupted systems for $N=20$, reconstructions from noisy systems for $N=20$; ground truth for $N=30$, reconstructions from corrupted systems for $N=30$, reconstructions from noisy systems for $N=30$.}
	\label{fig15}
\end{figure}


\section{Conclusion}
\label{sec6}

We proposed the Quantile-based randomized sparse Kaczmarz method with heavy ball momentum method, a novel approach tailored for solving corrupted linear systems.
The relaxed parameter and momentum parameter of this method are meticulously designed based on the minimal dual function principle, addressing the limitation of the minimal-error principle in \cite{lorenz2023minimal,zeng2024adaptiveh} that necessitates the knowledge of the exact right-hand vector. As a comprehensive mathematical framework, the Quantile-RaSK-MM method is versatile enough to be reduced to RaSK-MM applied to noisy or exact systems.
In addition, in order to reduce the semi-convergence phenomenon arising from excessive iterations, we design two stopping criteria for Quantile-RaSK-MM and RaSK-MM based on the DP and ME rule.
Theoretically, we established that the Quantile-RaSK-MM method converges to a corrupted horizon. 
Numerical experiments have not only confirmed the theoretical results but also demonstrated the effectiveness of our proposed methods in recovering solutions of corrupted or noisy linear systems.

In the future work, we would like to develop strategies to eliminate the convergence horizon induced by noise and corruption. It would be a key issue to reduce the influence of noise and corruption on the convergence results of our proposed algorithms. 
Moreover, building on the frameworks in \cite{jin2023stochastic,jinconvergence,jin2024adaptive}, we expect to establish regularization properties and convergence rates for Quantile-RaSK-MM under benchmark source conditions in infinite-dimensional spaces.

\section*{Acknowledgements}

The authors would like to thank the anonymous referees and the associate editor for valuable suggestions and comments, which allowed us to improve the original presentation. 
This work was funded by the National Natural Science Foundation of China (No.12471300, No.61977065), and the China Scholarship Council program (Project ID: 202406110004).
The authors declare no competing interests. All data that support the findings of this study are included within the article. All authors contributed to the study’s conception and design. The first draft of the
manuscript was written by Lu Zhang, and all authors commented on previous versions of the manuscript.
All authors read and approve the final manuscript and are all aware of the current submission.

\bibliographystyle{plain}
\bibliography{ref}

@article{schopfer2019linear,
  title={Linear convergence of the randomized sparse {K}aczmarz method},
  author={Sch{\"o}pfer, Frank and Lorenz, Dirk A},
  journal={Mathematical Programming},
  volume={173},
  pages={509--536},
  year={2019},
  publisher={Springer}
}

@article{jinconvergence,
	title={Convergence rates of a dual gradient method for constrained linear ill-posed problems},
	author={Jin, Qinian},
	journal={Numerische Mathematik},
	volume={151},
	number={4},
	pages={841--871},
	year={2022},
	publisher={Springer}
}

@article{hamarik2001monotone,
	title={On the monotone error rule for parameter choice in iterative and continuous regularization methods},
	author={H{\"a}marik, U and Tautenhahn, U},
	journal={BIT Numerical Mathematics},
	volume={41},
	number={5},
	pages={1029--1038},
	year={2001},
	publisher={Springer}
}

@article{hansen2018air,
	title={AIR Tools {II}: algebraic iterative reconstruction methods, improved implementation},
	author={Hansen, Per Christian and J{\o}rgensen, Jakob Sauer},
	journal={Numerical Algorithms},
	volume={79},
	number={1},
	pages={107--137},
	year={2018},
	publisher={Springer}
}

@article{hansen2012air,
	title={AIR tools—a {MATLAB} package of algebraic iterative reconstruction methods},
	author={Hansen, Per Christian and Saxild-Hansen, Maria},
	journal={Journal of Computational and Applied Mathematics},
	volume={236},
	number={8},
	pages={2167--2178},
	year={2012},
	publisher={Elsevier}
}

@article{lorenz2023minimal,
	title={Minimal error momentum {B}regman-{K}aczmarz},
	author={Lorenz, Dirk A and Winkler, Maximilian},
	journal={Linear Algebra and its Applications},
	volume={709},
	pages={416--448},
	year={2025},
	publisher={Elsevier}
}

@article{davis2011university,
	title={The {U}niversity of Florida sparse matrix collection},
	author={Davis, Timothy A and Hu, Yifan},
	journal={ACM Transactions on Mathematical Software (TOMS)},
	volume={38},
	number={1},
	pages={1--25},
	year={2011},
	publisher={ACM New York, NY, USA}
}

@article{zhang2024quantile,
	title={Quantile-based random sparse {K}aczmarz for corrupted and noisy linear systems},
	author={Zhang, Lu and Wang, Hongxia and Zhang, Hui},
	journal={Numerical Algorithms},
	pages={1--36},
	year={2024},
	publisher={Springer}
}

@article{cai2025subsample,
	title={On Subsample Size of Quantile-Based Randomized {K}aczmarz},
	author={Cai, JianFeng and Chen, Junren and Ma, Anna and Wu, Tong},
	journal={arXiv preprint arXiv:2507.15185},
	year={2025}
}

@article{jin2023stochastic,
	title={Stochastic mirror descent method for linear ill-posed problems in Banach spaces},
	author={Jin, Qinian and Lu, Xiliang and Zhang, Liuying},
	journal={Inverse Problems},
	volume={39},
	number={6},
	pages={065010},
	year={2023},
	publisher={IOP Publishing}
}

@inproceedings{haddock2023subsampled,
	title={On subsampled quantile randomized {K}aczmarz},
	author={Haddock, Jamie and Ma, Anna and Rebrova, Elizaveta},
	booktitle={2023 59th Annual Allerton Conference on Communication, Control, and Computing (Allerton)},
	pages={1--8},
	year={2023},
	organization={IEEE}
}

@article{cheng2022block,
	title={On block accelerations of quantile randomized {K}aczmarz for corrupted systems of linear equations},
	author={Cheng, Lu and Jarman, Benjamin and Needell, Deanna and Rebrova, Elizaveta},
	journal={Inverse Problems},
	volume={39},
	number={2},
	pages={024002},
	year={2022},
	publisher={IOP Publishing}
}

@article{tondji2024acceleration,
	title={Acceleration and restart for the randomized {B}regman-{K}aczmarz method},
	author={Tondji, Lionel and Necoara, Ion and Lorenz, Dirk A},
	journal={Linear Algebra and its Applications},
	volume={699},
	pages={508--538},
	year={2024},
	publisher={Elsevier}
}

@book{2017Convex,
	title={Convex Analysis and Monotone Operator Theory in Hilbert Spaces},
	author={Heinz H.Bauschke, Patrick L.Combettes},
	publisher={Springer, Cham, second edition},
	year={2017},
}

@article{petra2015randomized,
	title={Randomized sparse block {K}aczmarz as randomized dual block-coordinate descent},
	author={Petra, Stefania},
	journal={Analele {\c{s}}tiin{\c{t}}ifice ale Universit{\u{a}}{\c{t}}ii" Ovidius" Constan{\c{t}}a. Seria Matematic{\u{a}}},
	volume={23},
	number={3},
	pages={129--149},
	year={2015}
}

@article{zeng2024adaptive,
	title={On adaptive stochastic extended iterative methods for solving least squares},
	author={Zeng, Yun and Han, Deren and Su, Yansheng and Xie, Jiaxin},
	journal={Mathematics of Computation},
	year={2025}
}

@article{jin2024adaptive,
	title={An adaptive heavy ball method for ill-posed inverse problems},
	author={Jin, Qinian and Huang, Qin},
	journal={SIAM Journal on Imaging Sciences},
	volume={17},
	number={4},
	pages={2212--2241},
	year={2024},
	publisher={SIAM}
}

@article{jin2024convergence,
	title={Convergence analysis of a stochastic heavy-ball method for linear ill-posed problems},
	author={Jin, Qinian and Liu, Yanjun},
	journal={Journal of Computational and Applied Mathematics},
	pages={116702},
	year={2025},
	publisher={Elsevier}
}

@article{nesterov2012efficiency,
	title={Efficiency of coordinate descent methods on huge-scale optimization problems},
	author={Nesterov, Yu},
	journal={SIAM Journal on Optimization},
	volume={22},
	number={2},
	pages={341--362},
	year={2012},
	publisher={SIAM}
}

@article{cai2009convergence,
	title={Convergence of the linearized {B}regman iteration for $l_1$-norm minimization},
	author={Cai, Jian Feng and Osher, Stanley and Shen, Zuowei},
	journal={Mathematics of Computation},
	volume={78},
	number={268},
	pages={2127--2136},
	year={2009}
}

@inproceedings{jarman2021quantilerk,
	title={Quantile{RK}: Solving large-scale linear systems with corrupted, noisy data},
	author={Jarman, Benjamin and Needell, Deanna},
	booktitle={2021 55th Asilomar Conference on Signals, Systems, and Computers},
	pages={1312--1316},
	year={2021},
	organization={IEEE}
}

@article{zhang2022weighted,
	title={A weighted randomized sparse {K}aczmarz method for solving linear systems},
	author={Zhang, Lu and Yuan, Ziyang and Wang, Hongxia and Zhang, Hui},
	journal={Computational and Applied Mathematics},
	volume={41},
	number={8},
	pages={383},
	year={2022},
	publisher={Springer}
}

@article{elfving2007stopping,
	title={Stopping rules for {L}andweber-type iteration},
	author={Elfving, Tommy and Nikazad, Touraj},
	journal={Inverse Problems},
	volume={23},
	number={4},
	pages={1417},
	year={2007},
	publisher={IOP Publishing}
}

@article{necoara2019faster,
	title={Faster randomized block {K}aczmarz algorithms},
	author={Necoara, Ion},
	journal={SIAM Journal on Matrix Analysis and Applications},
	volume={40},
	number={4},
	pages={1425--1452},
	year={2019},
	publisher={SIAM}
}

@article{tondji2023faster,
	title={Faster randomized block sparse {K}aczmarz by averaging},
	author={Tondji, Lionel and Lorenz, Dirk A},
	journal={Numerical Algorithms},
	volume={93},
	number={4},
	pages={1417--1451},
	year={2023},
	publisher={Springer}
}

@article{yuan2022sparse,
	title={Sparse sampling {K}aczmarz--{M}otzkin method with linear convergence},
	author={Yuan, Ziyang and Zhang, Hui and Wang, Hongxia},
	journal={Mathematical Methods in the Applied Sciences},
	volume={45},
	number={7},
	pages={3463--3478},
	year={2022},
	publisher={Wiley Online Library}
}

@article{tondji2024adaptive,
	title={Adaptive {B}regman--{K}aczmarz: an approach to solve linear inverse problems with independent noise exactly},
	author={Tondji, Lionel and Tondji, Idriss and Lorenz, Dirk},
	journal={Inverse Problems},
	volume={40},
	number={9},
	pages={095006},
	year={2024},
	publisher={IOP Publishing}
}

@article{zeng2024adaptiveh,
	title={On adaptive stochastic heavy ball momentum for solving linear systems},
	author={Zeng, Yun and Han, Deren and Su, Yansheng and Xie, Jiaxin},
	journal={SIAM Journal on Matrix Analysis and Applications},
	volume={45},
	number={3},
	pages={1259--1286},
	year={2024},
	publisher={SIAM}
}

@inproceedings{ghadimi2015global,
	title={Global convergence of the heavy-ball method for convex optimization},
	author={Ghadimi, Euhanna and Feyzmahdavian, Hamid Reza and Johansson, Mikael},
	booktitle={2015 European control conference (ECC)},
	pages={310--315},
	year={2015},
	organization={IEEE}
}

@article{polyak1964some,
	title={Some methods of speeding up the convergence of iteration methods},
	author={Polyak, Boris T},
	journal={Ussr computational mathematics and mathematical physics},
	volume={4},
	number={5},
	pages={1--17},
	year={1964},
	publisher={Elsevier}
}

@article{loizou2020momentum,
	title={Momentum and stochastic momentum for stochastic gradient, newton, proximal point and subspace descent methods},
	author={Loizou, Nicolas and Richt{\'a}rik, Peter},
	journal={Computational Optimization and Applications},
	volume={77},
	number={3},
	pages={653--710},
	year={2020},
	publisher={Springer}
}

@article{haddock2022quantile,
	title={Quantile-based iterative methods for corrupted systems of linear equations},
	author={Haddock, Jamie and Needell, Deanna and Rebrova, Elizaveta and Swartworth, William},
	journal={SIAM Journal on Matrix Analysis and Applications},
	volume={43},
	number={2},
	pages={605--637},
	year={2022},
	publisher={SIAM}
}

@article{yin2010analysis,
	title={Analysis and generalizations of the linearized {B}regman method},
	author={Yin, Wotao},
	journal={SIAM Journal on Imaging Sciences},
	volume={3},
	number={4},
	pages={856--877},
	year={2010},
	publisher={SIAM}
}

@article{lorenz2014linearized,
	title={The linearized {B}regman method via split feasibility problems: analysis and generalizations},
	author={Lorenz, Dirk A and Schopfer, Frank and Wenger, Stephan},
	journal={SIAM Journal on Imaging Sciences},
	volume={7},
	number={2},
	pages={1237--1262},
	year={2014},
	publisher={SIAM}
}

@article{karczmarz1937angenaherte,
	title={Angenaherte auflosung von systemen linearer glei-chungen},
	author={Karczmarz, Stefan},
	journal={Bull. Int. Acad. Pol. Sic. Let., Cl. Sci. Math. Nat.},
	pages={355--357},
	year={1937}
}

@article{strohmer2009randomized,
	title={A randomized {K}aczmarz algorithm with exponential convergence},
	author={Strohmer, Thomas and Vershynin, Roman},
	journal={Journal of Fourier Analysis and Applications},
	volume={15},
	number={2},
	pages={262--278},
	year={2009},
	publisher={Springer}
}

@article{needell2010randomized,
	title={Randomized {K}aczmarz solver for noisy linear systems},
	author={Needell, Deanna},
	journal={BIT Numerical Mathematics},
	volume={50},
	pages={395--403},
	year={2010},
	publisher={Springer}
}

@article{zouzias2013rek,
	title={Randomized Extended {K}aczmarz for Solving Least Squares},
	author={Zouzias, Anastasios and Freris, Nikolaos M.},
	journal={SIAM Journal on Matrix Analysis and Applications},
	volume={34},
	number={2},
	pages={773--793},
	year={2013},
	publisher={SIAM},
	doi={10.1137/120889897}
}

@article{steinerberger2021sv,
	title={Randomized {K}aczmarz Converges Along Small Singular Vectors},
	author={Steinerberger, Stefan},
	journal={SIAM Journal on Matrix Analysis and Applications},
	volume={42},
	number={2},
	pages={608--615},
	year={2021},
	publisher={SIAM},
	doi={10.1137/20M1350947}
}

@article{steinerberger2023quantile,
	title={Quantile-based random {K}aczmarz for corrupted linear systems of equations},
	author={Steinerberger, Stefan},
	journal={Information and Inference: A Journal of the IMA},
	volume={12},
	number={1},
	pages={448--465},
	year={2023},
	publisher={Oxford University Press}
}

@article{coria2024quantile,
	title={On Quantile Randomized {K}aczmarz for Linear Systems with Time-Varying Noise and Corruption},
	author={Coria, Nestor and Haddock, Jamie and Pacheco, Jaime},
	journal={arXiv preprint arXiv:2403.19874},
	year={2024}
}

@book{hansen2021computed,
	title={Computed tomography: algorithms, insight, and just enough theory},
	author={Hansen, Per Christian and J{\o}rgensen, Jakob and Lionheart, William RB},
	year={2021},
	publisher={SIAM}
}

@article{elfving2014semi,
	title={Semi-convergence properties of {K}aczmarz’s method},
	author={Elfving, Tommy and Hansen, Per Christian and Nikazad, Touraj},
	journal={Inverse problems},
	volume={30},
	number={5},
	pages={055007},
	year={2014},
	publisher={IOP Publishing}
}

@article{bottou2018optimization,
	title={Optimization methods for large-scale machine learning},
	author={Bottou, L{\'e}on and Curtis, Frank E and Nocedal, Jorge},
	journal={SIAM review},
	volume={60},
	number={2},
	pages={223--311},
	year={2018},
	publisher={SIAM}
}

@book{hastie2009elements,
	title={The elements of statistical learning: data mining, inference, and prediction},
	author={Hastie, Trevor and Tibshirani, Robert and Friedman, Jerome H and Friedman, Jerome H},
	volume={2},
	year={2009},
	publisher={Springer}
}

@article{yuan2022adaptively,
	title={Adaptively sketched {B}regman projection methods for linear systems},
	author={Yuan, Ziyang and Zhang, Lu and Wang, Hongxia and Zhang, Hui},
	journal={Inverse Problems},
	volume={38},
	number={6},
	pages={065005},
	year={2022},
	publisher={IOP Publishing}
}

@article{battaglia2024reverse,
	title={Reverse Quantile-RK and Its Application to Quantile-RK},
	author={Battaglia, Emeric and Ma, Anna},
	journal={Numerical Linear Algebra with Applications},
	volume={32},
	number={3},
	pages={e70024},
	year={2025},
	publisher={Wiley Online Library}
}

\end{document}